\newtheorem{theorem}{\sc Theorem}[section]
\newtheorem{proposition}[theorem]{\sc Proposition}
\newtheorem{lemma}[theorem]{\sc Lemma}
\newtheorem{corollary}[theorem]{\sc Corollary}
\theoremstyle{definition}
\newtheorem{definition}[theorem]{\sc Definition}
\newtheorem{example}[theorem]{\sc Example}
\theoremstyle{remark}
\newtheorem{notations}[theorem]{\sc Notations}
\newtheorem{remark}[theorem]{\sc Remark}
\newcommand{\tensor}[1]{\otimes_{\scriptscriptstyle{#1}}}
\newcommand{\Sf}[1]{\mathsf{#1}}
\newcommand{\cat}[1]{\mathcal{#1}}
\newcommand{\rmod}[1]{\Sf{Mod}_{\Sscript{#1}}}
\renewcommand{\hom}[3]{\mathrm{Hom}_{\Sscript{#1}}\left(#2,\,#3\right)}
\newcommand{\bara}[1]{\overline{#1}}
\newcommand{\End}[2]{\mathrm{End}_{\Sscript{#1}}(#2)}
\newcommand{\rhom}[3]{\mathrm{Hom}_{\scriptscriptstyle{\text{-}#1}}(#2,#3)}
\newcommand{\bcirc}{{\boldsymbol{\circ}}}
 \newcommand{\id}{\mathrm{Id}}
\newcommand{\Bim}[2]{{}_{\Sscript{#1}}\mathsf{Bim}{}_{\Sscript{#2}}}
\newcommand{\FBim}[2]{{}^{}_{\Sscript{#1}}\mathsf{Bim}{}^{\mathsf{flt}}_{\Sscript{#2}}}
\newcommand{\CBim}[2]{{}^{}_{\Sscript{#1}}\mathsf{Bim}{}^{\mathsf{c}}_{\Sscript{#2}}}
\newcommand{\varfun}[3]{{#1} \colon {#2} \rightarrow {#3}}
\newcommand{\lfun}[5]{{#1} \colon {#2} \longrightarrow {#3};\quad  \Big({#4} \longmapsto {#5}\Big)}
\newcommand{\sfun}[5]{{#1} \colon {#2} \rightarrow {#3};\, \Big({#4} \mapsto {#5}\Big)}
\newcommand{\prlimit}[2]{\varprojlim_{#1}\left({#2}\right)}
\newcommand{\injlimit}[2]{\varinjlim_{#1}\left({#2}\right)}
\newcommand{\CHom}[3]{\mathsf{Hom}{}^{\mathsf{c}}_{\Sscript{#1}}\left({#2},{#3}\right)}
\newcommand{\Hom}[3]{\mathsf{Hom}_{\Sscript{#1}}\left({#2},{#3}\right)}
\newcommand{\HA}{\mathsf{CHAlgd}_{\K}}
\newcommand{\CHA}{\mathsf{CHAlgd}_{\K}^{\text{c}}}
\newcommand{\Img}[1]{\mathsf{Im}\left({#1}\right)}
\newcommand{\FHom}[3]{\mathsf{Hom}^{\mathsf{flt}}_{\Sscript{#1}}\left({#2},{#3}\right)}
\newcommand{\CEnd}[2]{\mathsf{End}^{\mathsf{cnt}}_{#1}\left({#2}\right)}
\newcommand{\diff}{\mathsf{Diff}}
\newcommand{\Calg}{\mathrm{CAlg}_{\K }}
\newcommand{\proj}[1]{\mathsf{proj}(#1)}
\newcommand{\ann}[1]{\mathsf{Ann}\left({#1}\right)}
\renewcommand{\ker}[1]{\mathrm{Ker}\left({#1}\right)}
\newcommand{\what}[1]{\widehat{#1}}
\newcommand{\cmptens}[1]{~\what{\otimes}_{\Sscript{#1}}~}
\newcommand{\gr}{\mathrm{gr}}
\definecolor{bostonuniversityred}{rgb}{0.8, 0.0, 0.0}
\newcommand{\ntau}[2]{\tau_{\Sscript{#1}}^{\Sscript{#2}}}
\newcommand{\greekn}[3]{{#1}_{\Sscript{#2}}^{\Sscript{#3}}}
\newcommand{\eg}{e.g.~}
\newcommand{\N}{\mathbb{N}}
\newcommand{\K}{\Bbbk}
\newcommand{\C}{\mathbb{C}}
\newcommand{\R}{\mathbb{R}}
\newcommand{\sS}{\mathscr{S}}
\newcommand{\cB}{{\mathcal B}}
\newcommand{\cD}{{\mathcal D}}
\newcommand{\cE}{{\mathcal E}}
\newcommand{\cF}{{\mathcal F}}
\newcommand{\cH}{{\mathcal H}}
\newcommand{\cI}{{\mathcal I}}
\newcommand{\cJ}{{\mathcal J}}
\newcommand{\cK}{{\mathcal K}}
\newcommand{\cL}{{\mathcal L}}
\newcommand{\cM}{{\mathcal M}}
\newcommand{\cR}{{\mathcal R}}
\newcommand{\cS}{{\mathcal S}}
\newcommand{\cU}{{\mathcal U}}
\newcommand{\cV}{{\mathcal V}}
\newcommand{\Sscript}[1]{\scriptscriptstyle{#1}}
\newcommand{\mnm}{\mu_{\Sscript{n,m}}}
\newcommand{\taun}{\tau_{\Sscript{n}}}
\newcommand{\taup}{\tau_{\Sscript{p}}}
\newcommand{\tauq}{\tau_{\Sscript{q}}}
\newcommand{\thetan}{\theta_{\Sscript{n}}}
\newcommand{\limn}{\underset{n\to\infty}{\lim}}
\newcommand{\limk}{\underset{k\to\infty}{\lim}}
\newcommand{\liml}{\underset{l\to\infty}{\lim}}
\newcommand{\limxn}{\underset{n\to\infty}{\lim}(x_n)}
\newcommand{\umas}{u_{\Sscript{+}}}
\newcommand{\umin}{u_{\Sscript{-}}}
\newcommand{\tuau}{\cU_{\Sscript{A}}\tensor{A} \cU_{\Sscript{A}}}
\newcommand{\tfuafu}[2]{F^{#1}\cU_{\Sscript{A}}\tensor{A}F^{#2}\cU_{\Sscript{A}}}
\newcommand{\ltuau}{\cU_{\Sscript{A}}\tensor{A} {}_{\Sscript{A}}\cU}
\newcommand{\Rep}[1]{\mathrm{Rep}_{\Sscript{\cM}}(\mathcal{#1})}
\begin{document}
\allowdisplaybreaks

\title[Topological tensor product of bimodules, complete Hopf Algebroids and convolution algebras.]{Topological tensor product of bimodules, complete Hopf Algebroids and convolution algebras}

\author{Laiachi El Kaoutit}
\address{Universidad de Granada, Departamento de \'{A}lgebra and IEMath-Granada. Facultad de Educaci\'{o}n, Econon\'ia y Tecnolog\'ia de Ceuta. Cortadura del Valle, s/n. E-51001 Ceuta, Spain}
\email{kaoutit@ugr.es}
\urladdr{http://www.ugr.es/~kaoutit/}

\author{Paolo Saracco}
\address{University of Turin, Department of Mathematics ``Giuseppe Peano'', via Carlo Alberto 10, I-10123 Torino, Italy}
\email{p.saracco@unito.it}
\urladdr{sites.google.com/site/paolosaracco}

\date{\today}
\subjclass[2010]{Primary  13J10, 20L05, 13N10, 16W70; Secondary 46M05, 16W50, 16T15, 22A22.}
\keywords{Complete commutative Hopf algebroids;  Completion 2-functor; Co-commutative Hopf algebroids; Finite dual; Filtered bimodules; Topological tensor product; Adic topology; Lie-Rinehart algebras; Lie algebroids; Convolutions algebras.}
\thanks{This paper was written while P. Saracco was member of the ``National Group for Algebraic and Geometric Structures, and their Applications'' (GNSAGA - INdAM). His stay as visiting researcher at the campus of Ceuta of the University of Granada was financially supported by IEMath-GR. Research supported by the Spanish Ministerio de Econom\'{\i}a y Competitividad  and the European Union FEDER, grant MTM2016-77033-P}

\begin{abstract}
Given a finitely generated and projective Lie-Rinehart algebra, we show that there is a continuous homomorphism of complete commutative Hopf algebroids between the completion of the finite dual of its universal enveloping Hopf algebroid and the associated convolution algebra. The topological Hopf algebroid structure of this convolution algebra is here clarified, by providing an  explicit description of its topological antipode as well as of its other structure maps. Conditions under which that homomorphism becomes an homeomorphism are also discussed.  These results, in particular, apply to the smooth global sections of any Lie algebroid over a smooth (connected) manifold and they lead a new formal groupoid scheme to enter into the picture. 
In the Appendix we develop the necessary machinery  behind complete Hopf algebroid constructions, which involves also the topological tensor product of filtered bimodules over filtered rings.
\end{abstract}

\maketitle

\vspace{-0.8cm}
\begin{small}
\tableofcontents
\end{small}

\pagestyle{headings}

\section{Introduction}

\subsection{Motivation and overviews} \label{ssec:muchocaldo}

Let $\cM$ be a smooth connected real manifold and denote by $A=C^{\infty}(\cM)$ its smooth $\R$-algebra \cite[\S 4.1]{nestruev}. All vector bundles considered below are over $\cM$ and by definition they are locally trivial with constant rank \cite[\S 11.2]{nestruev}. For a given Lie algebroid $\cL$ with anchor map $\omega:\cL\,\to\,T\cM$ (see Example \ref{exam:VayaCon}), we consider the category $\mathrm{Rep}_{\Sscript{\cM}}(\cL)$ consisting of those vector bundles $\cE$ with a (right) $\cL$-action. That is, an $A$-module morphism $\varrho_{-}: \Gamma(\cL) \to \End{\R}{\Gamma(\cE)}$ which is a Lie algebra map satisfying $\varrho_{X}(f s) =f \varrho_{X}(s) +\Gamma(\omega)_{X}(f) s$, for any section $s \in \Gamma(\cE)$ and any function $f \in A$. Morphisms in the category $\mathrm{Rep}_{\Sscript{\cM}}(\cL)$  are morphisms of vector bundles $\varphi: \cE \to \cF$ which commute with the actions, that is, such that $\Gamma(\varphi) \circ \varrho_{X} \, =\, \varrho_{X}' \circ \Gamma(\varphi)$, for every section $X \in \Gamma(\cL)$. Thanks to the properties of the smooth global sections functor $\Gamma$, expounded in \cite[Theorems 11.29, 11.32 and 11.39]{nestruev}, the category $\mathrm{Rep}_{\Sscript{\cM}}(\cL)$ is a (not necessarily abelian) symmetric rigid monoidal category, which is endowed with a fiber functor $\boldsymbol{\omegaup}: \mathrm{Rep}_{\Sscript{\cM}}(\cL) \to \proj{A}$ to the category of finitely generated and projective $A$-modules. The identity object in this monoidal structure is the line bundle $\cM \times \R$ with action  $\Gamma(\omega)$ (composed with the canonical injection $\mathrm{Der}_{\Sscript{\R}}(A) \subset \End{\R}{A}$), and the action on the tensor product of two objects $\cE$ and $\cF$ in $\Rep{L}$, is given by 
$$
(\varrho \tensor{A} \varrho')_{X}: \Gamma(\cE)\tensor{A}\Gamma(\cF) \longmapsto \Gamma(\cE)\tensor{A}\Gamma(\cF), \quad \Big(  s\tensor{A}s' \longmapsto \varrho_{X}(s)\tensor{A} s' + s\tensor{A}\varrho_{X}'(s')\Big), 
$$
for every $X \in \Gamma(\cL)$. The symmetry is the one given by the tensor product over $A$.  Up to the natural $A$-linear isomorphism $\Gamma(\cE^{*}) = \Gamma\big(\mathsf{Hom}(\cE,\cM\times \R)\big) \cong \mathsf{Hom}_{\mathcal{C}^{\infty}(\cM)}\big(\Gamma\left(\cE\right),\mathcal{C}^{\infty}(\cM)\big) = \Gamma(\cE)^{*}$ (see \eg \cite[Theorem 11.39]{nestruev}), the action on the dual vector bundle $\cE^{*}$ is provided by the following $A$-module and Lie algebra map $\varrho^{*}: \Gamma(\cL) \to \End{\R}{\Gamma(\cE)^{*}}$, sending any section $X \in \Gamma(\cL) $ to the $\R$-linear map 
$$
\varrho^{*}_{X} : \Gamma(\cE)^{*} \longrightarrow \Gamma(\cE)^{*}, \quad \Big( s^{*} \longmapsto \Gamma(\omega)_{X} \circ s^{*}- s^{*} \circ \varrho_{X} \Big)
$$
(see also, for example, \cite[\S1.4]{Crainic}, \cite[page 731]{ModularClasses}).

The Tannaka reconstruction process shows  then that the pair $(\Rep{L}, \boldsymbol{\omegaup})$ leads to a (universal) commutative Hopf algebroid which we denote by $(A, \cU^{\circ})$ and then to a complete commutative (or topological) Hopf algebroid $(A, \what{\cU^{\circ}})$, where $A$ is considered as a discrete topological ring. It turns out that $(A, \cU^{\circ})$ is the finite dual Hopf algebroid, in the sense of \cite{LaiachiGomez},  of the (co-commutative) universal enveloping Hopf algebroid $(A, \cU:=\cV_{\Sscript{A}}(\Gamma(\cL))$  of the Lie-Rinehart algebra $(A,\Gamma(\cL))$, because in this case ($\cM$ connected) the category $\Rep{L}$ can be identified with the category of (right) $\cU$-modules with finitely generated and projective underlying $A$-module structure.  In this way, we end up with a continuous $(A\tensor{\R}A)$-algebra map $\zeta: \cU^{\circ} \to \cU^{*}$, where the latter is the convolution algebra of $\cU$. This algebra admits a topological Hopf algebroid structure, rather than just a topological bialgebroid one as it is known in the literature, see \cite{Kapranov:2007}. It is worthy to point out that, even if the Tannaka reconstruction process may be applied in this context, the pair $(\Rep{L}, \boldsymbol{\omegaup})$ does not necessarily form a Tannakian category in the sense of \cite{Deligne:1990}.

The main motivation of this paper is to set up, in a self-contained and comprehensive way,  the basic notions and tools behind the theory of complete commutative (or topological) Hopf algebroids and the connection of this theory with Lie algebroids as above. 
Our aim is to provide explicitly the topological Hopf algebroid structure on $\cU^{*}$ mentioned previously and to show that the completion $\what{\zeta}$ of the map $\zeta$ leads not only to a continuous map, but also to a continuous morphism of topological Hopf algebroids. Besides, we will provide some conditions under which $\what{\zeta}$ becomes an homeomorphism, as well. 

Now, knowing that the map $\what{\zeta}$ is always a morphism of topological Hopf algebroids, one may analyse the distinguished  case  when $\cL=T \cM$ is the tangent bundle with its obvious Lie algebroid structure. It can be shown, by using for instance \cite[Corollary 15.5.6]{MR1811901},  that in this case the universal enveloping algebroid $\cU$ can be identified with the algebra $\mathrm{Diff}(A)$ of all differential operators on $A$. On the other hand, one can extend the duality between $l$-differential operators and $l$-jets of the bundle of $l$-jets of functions on $\cM$ (see e.g. \cite[Theorem 11.64]{nestruev}) to an homeomorphism between the convolution algebra $\cU^{*}$ and the algebra of infinite jets $\cJ(A):=\what{A\tensor{\R}A}$ (see Example \ref{exam:Sacarrelli} below). These are isomorphic not only  as  complete algebras, but also as topological Hopf algebroids.  Summing up, we have a commutative diagram 
\begin{equation}
\begin{gathered}\label{eq:maindiagram}
\xymatrix@R=15pt@C=30pt{ \what{\cU^{\bcirc}} \ar@{->}^-{\what{\zeta}}[rr]   &  & \mathrm{Diff}(A)^{*}  \\ & \cJ(A) \ar@{->}^-{\cong}_-{\what{\vartheta}}[ru]  \ar@{->}^-{\what{\etaup}}[lu] & }
\end{gathered}
\end{equation}
of topological Hopf algebroids, where the algebra maps $\vartheta: A\tensor{\R}A \to \cU^{*}$  and $\etaup:A\tensor{\R}A \to \cU^{\circ}$ define the source and the target of $\cU^{*}$ and $\cU^{\circ}$ respectively. 
Furthermore, in view of \cite[Proposition A.5.10]{Kapranov:2007}, if the map $\what{\zeta}$ is an homeomorphism, then  both  ${\rm Spf}(\what{\cU^{\circ}})$ and ${\rm Spf}(\mathrm{Diff}(A)^{*})$ can be seen as formal groupoids  that integrate the Lie algebroid $\cL$.   Nevertheless, it is not always guaranteed that $\what{\zeta}$ is an homeomorphism, even for the simplest case, see \cite{LaiachiPaolo} for a counterexample and Remark \ref{rem:laventanadelfrente} for more details.

\subsection{Description of the main results}

{\renewcommand{\thetheorem}{{A}}

Recall that a complete Hopf algebroid in the sense of \S\ref{ssec:CCHAlgds} is a co-groupoid object in the category of  complete commutative algebras, where the latter is endowed with a suitable topological tensor product (see Appendix \ref{sec:CBCF}). 

Let $(A,L)$ be a Lie-Rinehart algebra whose underlying module $L_{\Sscript{A}}$ is finitely generated and projective  and consider $\cU:=\cV_{\Sscript{A}}(L)$, its universal enveloping Hopf algebroid (see \S\ref{ssec:ULA} for details). This is a co-commutative (right) Hopf algebroid to which one can associate two complete commutative (or topological) Hopf algebroids, where the base algebra $A$ is trivially filtered. The first one of these is the completion $(A, \what{\cU^{\circ}})$ of the finite dual $(A,\cU^{\circ})$, the commutative Hopf algebroid constructed as in \cite{LaiachiGomez},
see  \S\ref{ssec:Fdual} for more details on this construction.  The second one is $(A, \cU^{*})$, where $\cU^{*}$ is  the commutative convolution algebra of $\cU$. The topological Hopf structure of this algebroid is explicitly retrieved in \S\ref{sec:Ustar}. It is noteworthy to mention that, in general, even if a given co-commutative Hopf algebroid  possesses an admissible filtration (see \S\ref{ssec:FUstra} and \S\ref{ssec:AFiltration}), it is not clear, at least  to us,  how to endow its convolution algebra with a topological antipode.  Namely, one needs the translation map to be an homeomorphism, or at least a filtered algebra map. A condition which we show that is always fulfilled for $\cV_{\Sscript{A}}(L)$ with $L$ as above.

As we already mentioned, at the algebraic level we have a canonical $(A\tensor{}A)$-algebra map $\zeta: \cU^{\circ} \to \cU^{*}$. The following result, which concerns the properties of its completion $\what{\zeta}$, is our main theorem (stated below as Theorem \ref{thm:triangle} and its Corollary \ref{coro:Equivalence}).

\begin{theorem}\label{thm:A}
Let $(A,\cU)$ be a co-commutative (right) Hopf algebroid with an admissible filtration (see \S\ref{ssec:FUstra}) and  assume that the translation map $\delta$ of $\cU$, defined in \eqref{eq:amore}, is a filtered algebra map. Then the algebra map $\zeta: \cU^{\bcirc} \to \cU^*$ factors through a continuous morphism $\what{\zeta}: \what{\cU^{\bcirc}} \to \cU^*$ of complete Hopf algebroids. 

In particular, this applies to $\cU=\cV_{\Sscript{A}}(L)$ for any Lie-Rinehart algebra $(A,L)$ with $L_{\Sscript{A}}$ a finitely generated and projective module. Moreover, 
if $\cU^{\circ}$ is an Hausdorff topological space with respect to its canonical adic topology and $\what{\zeta}$ is an homeomorphism, then $\zeta$ is injective and therefore there is an equivalence of symmetric rigid monoidal categories between the category of right  $L$-modules  and the category of right $\cU^{\circ}$-comodules, with finitely generated and projective underlying $A$-module structure.
\end{theorem}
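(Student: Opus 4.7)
The plan is to construct $\what{\zeta}$ through the universal property of the completion and then verify compatibility with each Hopf algebroid structure map on the dense subalgebra, from where the statement will extend by continuity. First, I observe that $\cU^*$ is already complete for the topology induced by the admissible filtration $\{F^n\cU\}$: if $I_n$ denotes the annihilator of $F^n\cU$, then $\cU^*\cong\varprojlim \cU^*/I_n$. The map $\zeta:\cU^{\bcirc}\to\cU^*$ is the tautological inclusion of the finite dual into the full linear dual, and it is filtered for the canonical adic topology on $\cU^{\bcirc}$ and for the filtration $\{I_n\}$ on $\cU^*$. Completeness of $\cU^*$ then yields the unique continuous extension $\what{\zeta}:\what{\cU^{\bcirc}}\to\cU^*$.

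To see that $\what{\zeta}$ is a morphism of complete Hopf algebroids, I would verify the structure maps separately. Source, target and counit are immediate, being in both cases induced from the unit of $\cU$ and from evaluation at $1$. The comultiplications arise by dualising the multiplication of $\cU$; via the identification $\cU^*\cotensor{A}\cU^*\cong\what{\cU^*\tensor{A}\cU^*}$ provided by the topological tensor product of Appendix \ref{sec:CBCF}, the compatibility holds on $\cU^{\bcirc}$ and extends by continuity. The delicate point is the antipode: on $\cU^{\bcirc}$ it is the dualised translation map $\delta^*$, while on $\cU^*$ the topological antipode is defined precisely by continuously extending $\delta^*$ through the completed tensor product. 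The hypothesis that $\delta$ is a filtered algebra map is exactly what ensures that $\delta^*$ is well-defined on $\cU^*$ and continuous, so the two antipodes coincide on the dense subalgebra $\cU^{\bcirc}$ and agree after completion.

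For the specialisation to $\cU=\cV_{\Sscript{A}}(L)$ with $L_{\Sscript{A}}$ finitely generated and projective, I would equip $\cU$ with its PBW filtration, where $F^n\cU$ is generated by products of at most $n$ elements of $L$; the fgp hypothesis ensures admissibility, and since on a generator $X\in L$ the translation map sends $X$ to $X\tensor{A}1-1\tensor{A}X\in F^1\cU\tensor{A}F^1\cU$ and $\delta$ extends multiplicatively, it is in fact a filtered algebra map of degree zero. For the final assertion, Hausdorffness of $\cU^{\bcirc}$ gives an injection $\cU^{\bcirc}\hookrightarrow\what{\cU^{\bcirc}}$; if $\what{\zeta}$ is a homeomorphism then $\zeta$ is injective, i.e.\ $\cU^{\bcirc}$ separates the points of $\cU$. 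This separation is precisely the condition needed for the reconstruction process of \cite{LaiachiGomez} to upgrade the natural comparison functor between right $L$-modules (equivalently, right $\cU$-modules with fgp underlying $A$-module) and right $\cU^{\bcirc}$-comodules with fgp underlying $A$-module into a symmetric rigid monoidal equivalence.

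The main obstacle is the antipode: unlike the other structure maps, it is not dual to an operation on $\cU$ itself but to the translation, which lives inside a tensor product. Producing a topological antipode on $\cU^*$ is already non-trivial (as emphasised in the introduction, the existing literature supplies only a topological bialgebroid structure), and matching it with the finite-dual antipode on $\cU^{\bcirc}$ requires tracking the filtered structure of $\delta$ through two distinct completion procedures; it is here that the full strength of the assumption that $\delta$ is a filtered algebra map is consumed.
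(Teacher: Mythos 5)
Your overall route is the same as the paper's: show $\zeta$ is filtered, factor it through the completion by the universal property, and then verify compatibility of the structure maps, with the antipode singled out as the point where the filteredness of $\delta$ is consumed. Two points, however, need repair. First, $\zeta$ is \emph{not} ``the tautological inclusion of the finite dual into the full linear dual.'' In the Hopf algebroid setting $\cU^{\bcirc}$ is the Tannakian quotient $\bigoplus_M M^{*}\tensor{T_M}M/\mathfrak{J}$ and $\zeta$ is the map $\overline{\varphi\tensor{T_M}m}\mapsto [u\mapsto\varphi(mu)]$, whose injectivity is an open problem in general (over a Dedekind base it is known, but not otherwise). If $\zeta$ were an inclusion, the final assertion of the theorem --- that Hausdorffness of $\cU^{\bcirc}$ plus $\what{\zeta}$ being a homeomorphism forces $\zeta$ to be injective --- would be vacuous; it is precisely the non-triviality of this point that motivates the statement. (Your last paragraph actually derives injectivity correctly from $\zeta=\what{\zeta}\circ\gamma$ with $\gamma$ injective by Hausdorffness, so the slip is localized, but it must be removed from the setup.)

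Second, the filteredness of $\zeta$ --- the one step that makes the factorization through $\what{\cU^{\bcirc}}$ possible --- is asserted but not proved, and it is not formal. One must check that $\zeta(\cK^{n})\subseteq \ann{F^{n-1}\cU}$, where $\cK=\ker{\varepsilon_{\circ}}$. The argument is short but uses the specific structure: one first verifies $\varepsilon_{*}\circ\zeta=\varepsilon_{\circ}$, so $\zeta(\cK)\subseteq\ker{\varepsilon_{*}}$; then one shows $\ker{\varepsilon_{*}}\subseteq\ann{F^{0}\cU}$ because $f(1_{\cU})=0$ and right $A$-linearity give $f(\tau_{0}(a))=f(1_{\cU})a=0$, i.e.\ $F^{0}\cU=A\subseteq\ker{f}$; finally multiplicativity of $\zeta$ together with $\ann{F^{p}\cU}*\ann{F^{q}\cU}\subseteq\ann{F^{p+q-1}\cU}$ yields the claim. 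Without this, the continuous extension $\what{\zeta}$ does not exist. A smaller inaccuracy: the topological antipode $\cS_{*}$ on $\cU^{*}$ is not obtained by ``continuously extending $\delta^{*}$ through the completed tensor product'' but is given by the explicit formula $\cS_{*}(f)(u)=\varepsilon(f(u_{-})u_{+})$; the filteredness of $\delta$ is used to show that this map preserves the filtration $\ann{F^{n}\cU}$, and the identity $\zeta\circ\cS_{\circ}=\cS_{*}\circ\zeta$ is then a direct computation using the $\cU$-action on dual objects, not an abstract density argument.
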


As a consequence, we also discuss conditions under which the map $\what{\zeta}$ is an homeomorphism, for instance when $\what{\zeta}$ is injective and $\what{\vartheta}$ is a filtered isomorphism (as in the example at the end of \S\ref{ssec:muchocaldo}). See Proposition \ref{prop:zeroneveropen} for further conditions.}

\subsection{Notation and basic notions}

Throughout this paper and if not stated otherwise, $\K$ will denote a fixed  field (eventually a commutative ring in the appendices), all algebras, coalgebras and Lie algebras will be assumed to be over $\K$ (eventually, all modules will have an underlying structure of central $\K$-modules. That is, the left and right actions satisfy $k\cdot m=m\cdot k$ for every $m\in M$ and $k\in \K$. Such a category of central $\K$-modules will be denoted by $\rmod{\K}$).
Given two algebras $R$ and $S$, for an \emph{$(S, R)$-bimodule} $M$ we denote the underlying module structures by ${}_{\Sscript{S}}M$, $M_{\Sscript{R}}$ and ${}_{\Sscript{S}}M_{\Sscript{R}}$.  The \emph{right and left duals} stand for the bimodules $M^*:= \hom{-R}{M}{R}$ and ${}^*M:=\hom{S-}{M}{S}$, which we consider canonically as $(R, S)$-bimodule and $(S, R)$-bimodule, respectively. 
The unadorned tensor product $\otimes$ is understood to be over $\K$.
For  an algebra $A$, an \emph{$A$-(co)ring} is a (co)monoid inside the monoidal category of $A$-bimodules. 

We denote by $\Calg$  the category of commutative algebras.
Given an algebra $A$ in $\Calg$, and two bimodules ${}_{\Sscript{A}}M_{\Sscript{A}}$ and ${}_{\Sscript{A}}N_{\Sscript{A}}$. The notations like $M_{\Sscript{A}} ~\tensor{A}~ {}_{\Sscript{A}}N$,  ${}_{\Sscript{A}}M ~\tensor{A}~ N_{\Sscript{A}}$ or ${}_{\Sscript{A}}M ~\tensor{A}~ {}_{\Sscript{A}}N$, are used for different tensor products over $A$, in order to specify the sides on which we are making such a tensor product.
The subscript joining bimodules will be omitted in that notation whenever the sides are clear from the context.  For more details on the tensor product construction, see for instance \cite[A II.50, \S3]{MR}.

Recall finally that algebras and bimodules over algebras form a bicategory $\cB im_{\K}$. For the notion of bicategories and 2-functors (i.e., morphisms of bicategories) we refer the reader to \cite{Benabou}.

\section{Complete commutative Hopf algebroids.}\label{sec:CHA}

In this section we recall explicitly the structure maps involved in the definition of complete commutative Hopf algebroids. All algebras are assume to be commutative, in particular, all Hopf algebroids are understood to be so. We will freely use the notations and  notions expounded in  Appendix \ref{sec:CBCF}. Following the standard literature on the subject, we will assume $\K$ to be a field, even if the results presented here can be stated for a commutative ring in general.

\subsection{Commutative Hopf algebroids: Definition and examples}\label{ssec:CHAlgds}
We recall here from \cite[Appendix A1]{ravenel} the definition of commutative Hopf algebroid. We also expound some examples which will be needed in the forthcoming sections. 

A \emph{Hopf algebroid} is a \emph{cogroupoid} object in the category $\mathrm{CAlg}_{\K }$ (equivalently a groupoid in the category of affine schemes). Thus, a Hopf algebroid consists of a  pair of algebras $\left( A,\mathcal{H}\right) $ together with a diagram of algebra maps:
\begin{equation}\label{Eq:loop}
\xymatrix@C=45pt{ A \ar@<1ex>@{->}|(.4){\scriptstyle{s}}[r] \ar@<-1ex>@{->}|(.4){\scriptstyle{t}}[r] & \ar@{->}|(.4){ \scriptstyle{\varepsilon}}[l] \ar@(ul,ur)^{\scriptstyle{\cS}} \cH \ar@{->}^-{\scriptstyle{\Delta}}[r] & \cH \tensor{A}\cH , }
\end{equation}
where to perform the tensor product, $\cH$ is considered as an $A$-bimodule of the form ${}_{\scriptstyle{s}}\cH_{\scriptstyle{t}}$, i.e., $A$ acts on the left through $s$ while it acts on the right through $t$.
The maps $s,t:A\rightarrow \mathcal{H}$ are called the \emph{source} and the \emph{target} respectively,  $\varepsilon :\mathcal{H}\rightarrow A$ the \emph{counit},  $\Delta :\mathcal{H}\rightarrow \mathcal{H}\tensor{A} \mathcal{H}$ the \emph{comultiplication} and $\cS:\mathcal{H}\rightarrow \mathcal{H}$ the \emph{antipode}. These have to satisfy the following compatibility conditions.
\begin{itemize}
\item The datum $(\cH, \Delta, \varepsilon)$ has to be a coassociative and counital comonoid in $\Bim{A}{A}$, i.e., an $A$-coring. At the level of groupoids, this encodes a unitary and associative composition law between arrows.
\item The antipode has to satisfy $\cS \circ s=t$, $\cS \circ t = s$ and $\cS^2=\id_{\cH}$, which encode the fact that the inverse of an arrow interchanges source and target and that the inverse of the inverse is the original arrow.
\item The antipode has to satisfy also $\cS(h_1)h_2=(t\circ \varepsilon)(h)$ and $h_1\cS(h_2)=(s\circ \varepsilon)(h)$, which encode the fact that the composition of a morphism with its inverse on either side gives an identity morphism (the notation $h_1\otimes h_2$ is a variation of the Sweedler's Sigma notation, with the summation symbol understood, and it stands for $\Delta(h)$).
\end{itemize}
Note that there is no need to require that $\varepsilon \circ s= \id_A= \varepsilon \circ t$, as it is implied by the first condition.

A \emph{morphism of Hopf algebroids} is a pair of algebra maps
$\left( \phi _{\Sscript{0}},\phi _{\Sscript{1}}\right) :\left( A,\mathcal{H}\right)
\rightarrow \left( B,\mathcal{K}\right)$
such that
\begin{eqnarray*}
\phi _{1}\circ s =s\circ \phi _{0},&\qquad& \phi _{1}\circ t=t\circ \phi
_{0}, \\
\Delta \circ \phi _{1} =\chi \circ \left( \phi _{1}\otimes _{A}\phi
_{1}\right) \circ \Delta ,&\qquad& \varepsilon \circ \phi _{1}=\phi _{0}\circ
\varepsilon , \\
\cS\circ \phi _{1} =\phi _{1}\circ \cS &&
\end{eqnarray*}
where $\chi :\mathcal{K}\tensor{A}\mathcal{K\rightarrow }\mathcal{K} \tensor{B}\mathcal{K}$ is the obvious map induced by $\phi_0$, that is $\chi \left(h\tensor{A}k\right) =h\tensor{B}k$.

\begin{example}\label{exm:Halgd}
Here are some common examples of Hopf algebroids (see  \cite{ElKaoutit:2015} for more):
\begin{enumerate}
\item Let $A$ be an algebra. Then the pair $(A, A\otimes_{}A)$ admits a Hopf algebroid structure given by $s(a)=a\otimes_{}1$, $t(a)=1\otimes_{}a$, $S(a\otimes_{}a')=a'\otimes_{}a$, $\varepsilon(a\tensor{}a')=aa'$ and $\Delta(a\otimes_{}a')= (a\otimes_{}1) \tensor{A} (1\otimes_{}a')$, for any $a, a' \in A$.
\item Let $(B,\Delta, \varepsilon, \sS)$ be a Hopf algebra and $A$ a right $B$-comodule algebra with coaction $A \to A\otimes_{}B$, $a \mapsto a_{\Sscript{(0)}} \otimes_{} a_{\Sscript{(1)}}$. This means that $A$ is a right $B$-comodule and the coaction is an algebra map (see e.g. \cite[\S 4]{Montgomery:1993}).   Consider  the algebra  $\cH= A\otimes_{}B$ with  algebra extension $ \eta: A\otimes_{}A \to  \cH$, $a'\otimes_{}a \mapsto a'a_{\Sscript{(0)}}\otimes_{}a_{\Sscript{(1)}}$. Then $(A,\cH)$ has  a structure of Hopf algebroid, known as a \emph{split Hopf algebroid}:
$$
\Delta(a\otimes_{}b) = (a\otimes_{}b_{\Sscript{1}}) \otimes_{A} (1_{\Sscript{A}}\otimes_{}b_{\Sscript{2}}), \;\;\varepsilon(a\otimes_{}b)=a\varepsilon(b),\;\; \cS(a\otimes_{}b)= a_{\Sscript{(0)}}\otimes_{}  a_{\Sscript{(1)}}\sS(b).
$$
\item Let $B$ be as in part $(2)$ and $A$ any algebra. Then $(A, A\otimes_{}B\otimes_{}A)$ admits in a canonical way a structure of Hopf algebroid.  For $a,a'\in A$ and $b\in B$, its structure maps are given as follows
\begin{gather*}
s(a)=a\otimes 1_{\Sscript{B}}\otimes 1_{\Sscript{A}}, \quad t(a)=1_{\Sscript{A}}\otimes 1_{\Sscript{B}}\otimes a, \quad \varepsilon(a\otimes b\otimes a')=aa'\varepsilon(b), \\
\Delta(a\otimes b\otimes a')= \big(a\otimes b_1\otimes 1_{\Sscript{A}}\big) \tensor{A} \big(1_{\Sscript{A}}\otimes b_2\otimes a'\big), \quad \cS(a\otimes b\otimes a')=a'\otimes \sS(b)\otimes a.
\end{gather*}
 \end{enumerate}
\end{example}

\subsection{Complete commutative Hopf algebroids: Definition and examples}\label{ssec:CCHAlgds}
We keep the notation from Appendix \ref{sec:CBCF} and we refer to the same section for all definitions and results as well.

In the spirit of \cite[Appendix A]{quillen}, our next aim is to show that the completion functor from Appendix \ref{sec:CBCF} now induces a functor from the category of Hopf algebroids $\HA$ to that of complete Hopf algebroids $\CHA$  in the sense of \cite[\S 1]{MR1320989}, for example. To our knowledge this is not immediately clear. To perform such a construction one needs tools from the $I$-adic topology  \cite[chap~0~\S~7]{MR0163908} as well as from linear topology in bimodules and their topological tensor product as retrieved in the Appendices.

Assume we are given a diagram $S \leftarrow A \rightarrow R$ of filtered algebras and consider the filtered $(R,S)$-bimodule $R\tensor{A}S$ with filtration given as in Equation \eqref{eq:filtrations}:

$$
\cF_{n}(R\tensor{A}S)= \sum_{p+q=n} \Img{F_pR \tensor{A} F_qS}.
$$
This tensor product  becomes then a filtered algebra over $A$, i.e., $R\tensor{A}S$ is a filtered $A$-algebra.

Using this notion of filtered tensor product,  the definition of Hopf algebroid as given in \S\ref{ssec:CHAlgds} can be canonically adapted to the filtered context. Thus,  a pair $(A,\cH)$  of filtered algebras  is said to be a \emph{filtered Hopf algebroid} provided that there is a diagram as in equation~\eqref{Eq:loop}, where the involved maps satisfy the compatibility conditions in the filtered sense.  Morphisms between filtered Hopf algebroids are easily understood and the category so obtained  will be denoted by $\HA^{\Sf{flt}}$. The following example shows that there is a canonical functor $\HA \to \HA^{\Sf{flt}}$.

\begin{example}\label{exam:Hflt}
Consider a Hopf algebroid $(A,\cH)$. Assume that $A$ is trivially filtered and endow $\mathcal{H}$ with the
augmented filtration $F_{0}\mathcal{H=H}$ and $F_{n}\mathcal{H}:=K^{n}$ for
every $n\geq 1$, where $K:=\ker{ \varepsilon :\mathcal{H}
\rightarrow A} $ and $\mathcal{H}\tensor{A}\mathcal{H}$ with the usual tensor product filtration
$$
\cF_{0}\left( \mathcal{H}\tensor{A}\mathcal{H}\right) =\mathcal{H
}\tensor{A}\mathcal{H},\quad \cF_{n}\left( \mathcal{H}\tensor{A}\mathcal{H}
\right) =\sum_{p+q=n}\Img{K^{p}\tensor{A}K^{q}}
$$
for every $n\geq 1$.  Then $(A,\cH)$ is a filtered Hopf algebroid.
\end{example}

As it is already known, the usual (algebraic) tensor product  of complete bimodules  is not necessarily a complete bimodule. Thus, in order to introduce complete Hopf algebroids, one needs to use the topological (or complete) tensor product (see Definition \ref{def:CTP} and Theorem \ref{th:adjunction} for more details). In this way,  a \emph{complete Hopf algebroid} is a pair $(A,\cH)$ of complete algebras together with a diagram of filtered algebra maps similar to the one of equation \eqref{Eq:loop}, with the exception that $\Delta: \cH \to\cH\cmptens{A}\cH$, and where all the maps satisfy compatibility conditions analogous to that of a Hopf algebroid. In different but equivalent words, a complete Hopf algebroid is a cogroupoid object in the category of complete algebras. Henceforth, by complete Hopf algebroid we will always mean a \emph{commutative} complete Hopf algebroid.

\begin{remark}\label{rem:cmptensalgebra}
A detail has to be highlighted here. Assume that $A$ and $\cH$ are complete algebras and that $\cH$ is also an $A$-bimodule via two structure maps $s,t:A\to\cH$ as above. Then $\cH\cmptens{A}\cH$ is a complete algebra. Indeed, it can be checked that the obvious multiplication
\begin{equation*}
\sfun{\mu_{\Sscript{\cH\tensor{A}\cH}}}{\left(\cH\tensor{A}\cH\right)\otimes\left(\cH\tensor{A}\cH\right)}{\cH\tensor{A}\cH}{(x\tensor{A} y)\otimes (x'\tensor{A}y')}{xx'\tensor{A} yy'}
\end{equation*}
is well-defined and makes of $\cH\tensor{A}\cH$ a filtered algebra.
Since $\what{(-)}$ is monoidal (cf.~Corollary  \ref{prop:moncatcomplbimod}), we have that $\what{\cH\tensor{A}\cH}=\cH\cmptens{A}\cH$ is a complete algebra with $\what{\mu}$ as a multiplication.
\end{remark}

The following result can be seen as a consequence of Theorem \ref{thm:Athm}. Nevertheless, for the convenience of reader, we outline here the proof.
\begin{proposition}\label{coro:CHc}
The completion functor induces a functor
$$
\xymatrix{  \HA^{\Sf{flt}} \ar@{->}^-{\what{(-)}}[rr] & & \CHA     }
$$
to the category of complete Hopf algebroids with continuous morphisms of Hopf algebroids. In particular, we have the following composition of functors
$$
\xymatrix@R=15pt{  \HA^{\Sf{flt}} \ar@{->}^-{\what{(-)}}[rr] & & \CHA   \\ \HA \ar@{->}[u]   \ar@/_1ex/@{-->}[urr] & &   }
$$
\end{proposition}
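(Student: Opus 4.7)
The plan is to apply the completion $2$-functor from Theorem \ref{thm:Athm} objectwise to the structure maps of any filtered Hopf algebroid, relying on the monoidality of completion (Corollary \ref{prop:moncatcomplbimod}) to identify $\what{\cH \tensor{A} \cH}$ with $\what{\cH} \cmptens{\what{A}} \what{\cH}$. Concretely, given $(A,\cH) \in \HA^{\Sf{flt}}$, completion immediately yields continuous algebra maps $\what{s}, \what{t} \colon \what{A} \to \what{\cH}$, $\what{\varepsilon} \colon \what{\cH} \to \what{A}$ and $\what{\cS} \colon \what{\cH} \to \what{\cH}$, while the comultiplication produces $\what{\Delta} \colon \what{\cH} \to \what{\cH \tensor{A} \cH} \cong \what{\cH} \cmptens{\what{A}} \what{\cH}$ after post-composing with the canonical comparison isomorphism provided by the monoidal structure.

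Next I would verify that these completed maps satisfy the axioms of a complete Hopf algebroid. Each axiom (coassociativity, counitality, involutivity of $\cS$, compatibility of $\cS$ with $s$ and $t$, and the two antipode identities) is encoded by a commutative diagram of filtered algebra maps in $(A,\cH)$, and becomes, after applying the $2$-functor $\what{(-)}$, a commutative diagram of continuous filtered algebra maps; the monoidal constraints ensure that iterated tensor products appearing in these diagrams are correctly translated into iterated complete tensor products, while Remark \ref{rem:cmptensalgebra} takes care of the identification of the induced multiplications. The treatment of morphisms is analogous: a filtered morphism $(\phi_0,\phi_1)$ completes to a continuous morphism $(\what{\phi_0},\what{\phi_1})$, and the change-of-base comparison $\chi$ is handled by naturality of the monoidal structure. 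This produces the desired functor $\what{(-)} \colon \HA^{\Sf{flt}} \to \CHA$.

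For the second claim, I would invoke Example \ref{exam:Hflt}, which furnishes a canonical functor $\HA \to \HA^{\Sf{flt}}$ equipping any Hopf algebroid with the trivial filtration on the base and the augmentation (adic) filtration on the total algebra; composing with $\what{(-)}$ produces the dashed arrow. The principal technical obstacle, and effectively the only non-formal step, is the coherent compatibility between completion and the $A$-balanced tensor product encapsulated by Corollary \ref{prop:moncatcomplbimod}: once the natural isomorphism $\what{M \tensor{A} N} \cong \what{M} \cmptens{\what{A}} \what{N}$ is available, every Hopf algebroid axiom transfers formally from the filtered to the complete setting.
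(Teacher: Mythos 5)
Your proposal is correct and follows essentially the same route as the paper: complete the structure maps via the $2$-functor of Theorem \ref{thm:Athm}, use the monoidality of $\what{(-)}$ (Corollary \ref{prop:moncatcomplbimod}) to identify $\what{\cH\tensor{A}\cH}$ with $\what{\cH}\cmptens{\what{A}}\what{\cH}$, transfer the axioms diagrammatically, and obtain the dashed arrow from Example \ref{exam:Hflt}. The only point you leave implicit is the one the paper singles out for extra care: since $\cS$ is not $A$-bilinear for the bimodule structure used in $\cH\tensor{A}\cH$, the antipode identities must be rewritten through the auxiliary filtered maps $c_l(x\tensor{A}y)=\cS(x)y$ and $c_r(x\tensor{A}y)=x\cS(y)$ before completion can be applied, and one must check these are filtered (which follows from $\cS$ being filtered).
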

\begin{proof}
Let $(A,\cH)$ be a filtered Hopf algebroid with filtered algebra maps $s,t,\varepsilon,\Delta,\cS$. In particular, $\cH$ is an object in $\FBim{A}{A}$. Consider $\what{A}$ and $\what{\mathcal{H}}$, which are complete modules as well as complete algebras (see Theorem \ref{thm:Athm}). We have that $\what{\cH}$  is an object in $\CBim{\what{A}}{\what{A}}$ and we have complete algebra maps
\begin{gather*}
\varfun{\what{s}~,~\what{t}}{\what{A}}{\what{\cH}},\qquad \varfun{\what{\varepsilon }}{\what{\cH}}{
\what{A}}, \\
\varfun{\what{\Delta }}{\what{\cH}}{\what{\left(
\cH\tensor{A}\cH\right) }\stackrel{\eqref{Eq:Psi}}{\cong}\what{\cH}~\cmptens{\Sscript{\what{A}}}~\what{\cH}},\quad \text{and} \quad \varfun{\what{\cS}}{\what{\cH}}{\what{\cH}}.
\end{gather*}
These maps satisfy the same axioms as the original ones, because $\what{\left( -\right) }$ is a monoidal functor by Corollary \ref{prop:moncatcomplbimod}. The unique detail that needs perhaps a few words more is the antipode condition.  Consider the maps ${c_l}:{\cH\tensor{A}\cH}\to{\cH}$ and ${c_r}:{\cH\tensor{A}\cH}\to{\cH}$ such that $c_l\left(x\tensor{A} y\right)=\cS(x)y$ and  $c_r\left({x\tensor{A} y}\right)={x\cS(y)}$ respectively, for all $x,y\in\cH$. These allow us to write the antipode conditions as the commutativity of the diagram
\begin{equation*}
\xymatrix @C=55pt @R=15pt {
\cH & \cH \otimes \cH \ar[d]^-{p} \ar[r]^-{\mu~\circ~\left(\cH~\otimes~ \cS\right)} \ar[l]_-{\mu~\circ~\left(\cS~\otimes~\cH\right)} & \cH \\
 & \cH\tensor{A} \cH \ar[ul]^-{c_l} \ar[ur]_-{c_r} &  \\
A \ar[uu]^-{t} & \cH \ar[l]_-{\varepsilon} \ar[r]^-{\varepsilon} \ar[u]^-{\Delta} & A \ar[uu]_-{s}
}
\end{equation*}
One can check that $p$, $c_l$ and $c_r$ are all filtered. Indeed, for $p$ it is immediate and for $c_l$ and $c_r$ it follows from the fact that $\cS$ is filtered. We can now apply the functor $\what{(-)}$ to get a commutative diagram
\begin{equation}\label{Eq:TopAntip}
\xymatrix @C=55pt @R=15pt {
\what{\cH} & \what{\cH}~\what{ \otimes}~ \what{\cH} \ar[d]^-{\what{p}} \ar[r]^-{\what{\mu}~\circ~\left(\what{\cH}~\what{\otimes}~ \what{\cS}\right)} \ar[l]_-{\what{\mu}~\circ~\left(\what{\cS}~\what{\otimes}~\what{\cH}\right)} & \what{\cH} \\
 & \what{\cH}\cmptens{\what{A}} \what{\cH} \ar[ul]^-{\what{c_l}} \ar[ur]_-{\what{c_r}} &  \\
\what{A} \ar[uu]^-{\what{t}} & \what{\cH} \ar[l]_-{\what{\varepsilon}} \ar[r]^-{\what{\varepsilon}} \ar[u]^-{\what{\Delta}} & \what{A} \ar[uu]_-{\what{s}}
}
\end{equation}
which shows that $\what{\cS}$ is the antipode of $\what{\cH}$.
Therefore,  $\left( \what{A},\what{
\mathcal{H}}\right) $ is a complete  Hopf algebroid.

Let  $\left( \phi _{\Sscript{0}},\phi _{\Sscript{1}}\right) :\left( A,\mathcal{H}\right)
\rightarrow \left( B,\mathcal{K}\right)$ be a morphism of filtered Hopf algebroids. Hence we can consider $\varfun{\what{\phi_0}}{\what{A}}{\what{B}}$ and $\varfun{\what{\phi_1}}{\what{\mathcal{H}}}{\what{\mathcal{K}}}$ and these are morphisms of complete algebras. Since $\chi:\cK\tensor{A}\cK\to \cK\tensor{B}\cK$ is filtered,
$\left(\what{\phi _{0}},\what{\phi _{1}}\right) $ becomes a morphism of complete Hopf algebroids by the functoriality of $\what{(-)}$. In light of Example \ref{exam:Hflt}, $\what{(-)}$ restricts to a functor
\begin{equation*}
\varfun{\what{(-)}}{\HA}{\CHA},
\end{equation*}
and this finishes the proof.
\end{proof}

\begin{example}[The complete Hopf algebroid of infinite jets]\label{exam:Sacarrelli}
Let $A$ be an algebra and consider the pair $(A,A\tensor{}A)$ as an Hopf algebroid in a canonical way.  Then the pair $(A, \what{A\tensor{}A})$ is a complete Hopf algebroid by Proposition \ref{coro:CHc},  where $A$ is trivially filtered and $A\tensor{}A$ is given the $K$-adic topology where $K:=\ker{\mu_{A}:A\tensor{} A\to A}$ is the kernel of the multiplication. The complete algebra $\what{A\tensor{}A}$ is known under the name \emph{algebra of infinite jets}. This terminology is justified by looking  at the case when $A=C^{\infty}(\cM)$. Namely, in this case,  $\what{A\tensor{}A}$  coincides with the inverse limit of the smooth global sections $\Gamma(\cJ^{l}(\cM))$ of the bundle of $l$-jets of functions  on $\cM$, see \cite[\S 11.46]{nestruev}, \cite[Proposition 9.4 (iv)]{Krasilshchik:1997}.  
\end{example}

\section{The complete commutative Hopf algebroid structure of the convolution algebra}\label{sec:Ustar}

The main task of this section is to show that the convolution algebra of a given (right) co-commutative Hopf algebroid,  which is endowed with an admissible filtration (see subsection \ref{ssec:AFiltration}) and  whose translation map is a filtered algebra map, is a complete Hopf algebroid in the sense of subsection \ref{ssec:CCHAlgds}. At the level of topological (right) bialgebroids this was mentioned in \cite[A.5]{Kapranov:2007}. However, it seems that the literature is lacking a precise construction for a topological antipode.  Here we will supply this by providing the explicit description of all the involved maps in the complete Hopf algebroid structure of the convolution algebra. Such a description will be crucial in proving the results of the forthcoming section. The prototype  example, which we have in mind and which fulfils the above assumptions, is the convolution algebra of the universal Hopf algebroid of  a finitely generated and projective Lie-Rinehart algebra.

\subsection{Co-commutative Hopf algebroids: Definition and examples}\label{ssec:CoHAlg}
Next, we  recall the definition of a (right) co-commutative Hopf algebroid (see for instance \cite[A.3.6]{Kapranov:2007}, compare also with \cite[Definition 2.5.1]{Kowalzig}).  
A \emph{(right) co-commutative Hopf algebroid} over a commutative algebra is the datum of a commutative algebra $A$,  a possibly  noncommutative algebra $\cU$ and an algebra map $s=t: A\to \cU$  landing not necessarily in the center of $\cU$, with the following additional structure maps:
\begin{itemize}
\item A  morphism of right $A$-modules $\varepsilon: \cU \to A$ which satisfies $\varepsilon(uv)=\varepsilon(\varepsilon(u)v)$, for all $u, v \in \cU$;
\item An $A$-ring map $\Delta: \cU \to \cU \times_{\Sscript{A}} \cU$, where the module
$$
\cU\times_{\Sscript{A}} \cU:=\left\{ \sum_{\Sscript{i}} u_{\Sscript{i}}\tensor{A} v_{\Sscript{i}} \in \cU_{\Sscript{A}}~ \tensor{A} ~\cU_{\Sscript{A}} \mid~ \sum_{\Sscript{i}} au_{\Sscript{i}}\tensor{A} v_{\Sscript{i}}= \sum_{\Sscript{i}} u_{\Sscript{i}}\tensor{A} av_{\Sscript{i}}  \right\}
$$
is endowed with the algebra structure
\begin{equation*}
\sum_{\Sscript{i}} u_{\Sscript{i}}\times_{\Sscript{A}} v_{\Sscript{i}} ~ .~ \sum_{\Sscript{j}} u'_{\Sscript{j}}\times_{\Sscript{A}}v'_{\Sscript{j}} = \sum_{\Sscript{i,j}} u_{\Sscript{i}}u'_{\Sscript{j}} \times_{\Sscript{A}} v_{\Sscript{i}}v'_{\Sscript{j}}, \qquad 1_{\cU\times_{\Sscript{A}} \cU}=1_{\cU}\tensor{A}1_{\cU}
\end{equation*}
and the $A$-ring structure given by the algebra map $1: A \rightarrow \cU \times_{\Sscript{A}}\cU,~ \Big(  a \mapsto a \times_{\Sscript{A}} 1_{\Sscript{\cU}}=   1_{\Sscript{\cU}}  \times_{\Sscript{A}} a \Big)$;
\end{itemize}
subject to the conditions
\begin{itemize}
\item $\Delta$ is coassociative, co-commutative in a suitable sense and has $\varepsilon$ as a right and left counit;
\item the canonical map
\begin{equation}\label{Eq:beta}
\lfun{\beta}{\cU_{\Sscript{A}} ~\tensor{A}~ {}_{\Sscript{A}}\cU}{\cU_{\Sscript{A}}~\tensor{A}~ \cU_{\Sscript{A}}}{u\tensor{A}v}{uv_{\Sscript{1}}\tensor{A}v_{\Sscript{2}}}
\end{equation}
is bijective, where we denoted $\Delta(v)=v_{\Sscript{1}}\otimes_{A}v_{\Sscript{2}}$ (summations understood). As a matter of terminology, the map $\beta^{-1}(1\tensor{A}-):\cU\to \cU_{\Sscript{A}}\tensor{A}{_{\Sscript{A}}\cU}$ is the so-called \emph{translation map}. The following is a standard notation:
\begin{equation}\label{eq:amore}
\delta(u):=\beta^{-1}(1\tensor{A}u)=u_-\tensor{A}u_+ \quad\textrm{(summation understood)}.
\end{equation}
\end{itemize}

Right $A$-linear maps $\cU_{\Sscript{A}} \to A$ form the module $\cU^{*}$ with a structure of algebra given by the \emph{convolution product}
\begin{equation}\label{Eq:convolution}
f *g: \cU \longrightarrow  A, \quad \Big( u \longmapsto f(u_{\Sscript{1}}) g(u_{\Sscript{2}}) \Big).
\end{equation}
It comes endowed with an algebra map
\begin{equation}\label{Eq:vartheta}
\vartheta=s_*\otimes_{}t_*: A\otimes_{}A \longrightarrow \cU^{*}, \quad \Big( a'\otimes_{}a \longmapsto \left[ u \mapsto \varepsilon(a u)a'  \right]  \Big).
\end{equation}

\begin{example}\label{exam:URSO}
Let $A=\Bbbk[X]$ be the polynomial algebra with one variable. Consider its associated first Weyl algebra $\cU:=A[Y, \partial/\partial X]$, that is, its differential operators algebra. Then the pair $(A,\cU)$ is a (right) co-commutative Hopf algebroid with structure maps: 
$$
\Delta(Y)\,=\, 1\tensor{A}Y+ Y\tensor{A}1,\quad \varepsilon(Y)\,=\, 0, \quad Y_{\Sscript{-}}\tensor{A} Y_{\Sscript{+}}\,=\, 1\tensor{A}Y - Y\tensor{A}1.
$$
\end{example}

\subsection{The universal enveloping Hopf algebroid of a Lie-Rinehart algebra}\label{ssec:ULA}

Let $A$ be a commutative algebra over a field $\K$ of characteristic $0$ and denote by $\mathrm{Der}_{\Sscript{\K}}(A)$ the Lie algebra of all linear derivations of $A$.  Consider a Lie algebra $L$ which is also an $A$-module, and let $\omega: L \to \mathrm{Der}_{\Sscript{\K}}(A)$ be an $A$-linear morphism of Lie algebras.
Following \cite{Rin:DFOGCA}, the pair $(A,L)$ is called a \emph{Lie-Rinehart algebra} with \emph{anchor} map $\omega$ provided that
\begin{equation*}
{[ X, aY ]} = a{[X,Y]}+X(a)Y,
\end{equation*}
for all $X, Y \in L$ and $a, b \in A$, where $X(a)$ stands for $\omega(X)(a)$.

Apart from the natural examples $(A,\mathrm{Der}_{\Sscript{\K}}(A))$ (with anchor the identity map), another  basic source of examples are the smooth global sections of a given Lie algebroid over a smooth manifold.

\begin{example}\label{exam:VayaCon}
A \emph{Lie algebroid}  is a  vector bundle $\mathcal{L} \to \mathcal{M}$ over a smooth manifold, together with a map $\omega: \mathcal{L} \to T\mathcal{M}$ of vector bundles and a Lie structure $[-,-]$ on the  vector space  $\Gamma(\cL)$  of global smooth sections of $\mathcal{L}$, such that the induced map $\Gamma(\omega): \Gamma(\cL) \to \Gamma(T\mathcal{M})$ is a Lie algebra homomorphism, and for all $X, Y \in \Gamma(\cL)$ and any $f \in \mathcal{C}^{\infty}(\mathcal{M})$  one has
\begin{equation}\label{eq:LieAlgd}
[X,fY]\,=\, f[X,Y]+ \Gamma(\omega)(X)(f)Y.
\end{equation}
Then the pair $(\mathcal{C}^{\infty}(\mathcal{M}), \Gamma(\cL))$ is obviously a Lie-Rinehart algebra.
\end{example}

\begin{remark}
As it has been pointed out by the referee, the fact that the map $\Gamma(\omega): \Gamma(\cL) \to \Gamma(T\mathcal{M})$ in Example \ref{exam:VayaCon} is a Lie algebra homomorphism is a consequence of the Jacobi identity and of Relation \eqref{eq:LieAlgd} (see \eg \cite{Grabowski, Herz, Magri}). Therefore, it should be omitted from the definition of a Lie algebroid. Nevertheless, we decided to keep the somewhat redundant definition above to make it easier for the unaccustomed reader to see the parallel with Lie-Rinehart algebras.
\end{remark}

The \emph{universal enveloping Hopf algebroid} of a
Lie-Rinehart algebra is an algebra $ \cV_{\Sscript{A}}\left( L\right) $
endowed with a morphism $\iota _{\Sscript{A}}:A\rightarrow  \cV_{\Sscript{A}}\left( L\right) $ of algebras and an $A$-linear Lie algebra morphism $\iota _{\Sscript{L}}:L\rightarrow  \cV_{\Sscript{A}}\left( L\right) $ such that
\begin{equation}\label{eq:compLRalg}
\iota _{\Sscript{L}}\left(X\right) \iota _{\Sscript{A}}\left( a\right) - \iota _{\Sscript{A}}\left( a\right) \iota _{\Sscript{L}}\left( X\right)=\iota _{\Sscript{A}}\left( X
\left( a\right) \right)
\end{equation}
for all $a\in A$ and $X\in L$, which is universal with respect to this
property. In details, this means that if $\left( W,\phi _{\Sscript{A}},\phi _{\Sscript{L}}\right) $ is another algebra with a morphism $\phi _{\Sscript{A}}:A\rightarrow W$ of algebras and a morphism $\phi _{\Sscript{L}}:L\rightarrow W$ of Lie algebras and $A$-modules such that
\begin{equation*}
\phi _{\Sscript{L}}\left( X\right)\phi _{\Sscript{A}}\left( a\right)  -\phi _{\Sscript{A}}\left( a\right) \phi _{\Sscript{L}}\left( X\right)=\phi _{\Sscript{A}}\left( X \left(a\right) \right) ,
\end{equation*}
then there exists a unique algebra morphism $\Phi : \cV_{\Sscript{A}}
\left( L\right) \rightarrow W$ such that $\Phi \, \iota _{\Sscript{A}}=\phi _{\Sscript{A}}$ and $
\Phi \, \iota _{\Sscript{L}}=\phi _{\Sscript{L}}$.

Apart from the well-known constructions of \cite{Rin:DFOGCA} and \cite{MoerdijkLie}, the universal enveloping Hopf algebroid of a Lie-Rinehart algebra $(A,L)$ admits several other equivalent realizations. In this paper we opted for the following. Set $\eta :L\rightarrow A\otimes L;\, X\longmapsto 1_{\Sscript{A}}\otimes X$ and consider the tensor $A$-ring $T_{\Sscript{A}}\left( A\otimes L\right) $ of the $A$-bimodule $A\otimes L$. It can be shown that
\begin{equation*}
 \cV_{\Sscript{A}}\left( L\right) \cong \frac{T_{\Sscript{A}}\left( A\otimes L\right) }{\cJ}
\end{equation*}
where the two sided ideal $\cJ$ is generated by the set

\begin{equation*}
\cJ:=\left\langle \left.
\begin{array}{c}
\eta \left( X\right) \otimes _{\Sscript{A}}\eta \left( Y\right) -\eta \left( Y\right) \otimes _{\Sscript{A}}\eta \left( X\right) -\eta \left( \left[ X,Y\right] \right) , \\
 \eta \left( X\right) \cdot a- a\cdot \eta \left( X\right) -\omega \left( X\right)\left( a\right)
\end{array}
\right| X,Y\in L, \; a\in A\right\rangle
\end{equation*}

We have the algebra morphism $\iota _{\Sscript{A}}:A\rightarrow
\cV_{\Sscript{A}}\left( L\right);\, a\longmapsto a+\cJ$ and the right $A$-linear Lie algebra map $\iota _{\Sscript{L}}:L\rightarrow \cV_{\Sscript{A}}\left( L\right);\, X\longmapsto \eta \left( X\right) +\cJ$ that
satisfy the compatibility condition \eqref{eq:compLRalg}. It turns out that $\cV_{\Sscript{A}}(L)$ is a co-commutative right $A$-Hopf algebroid with structure maps induced by the assignment
\begin{gather*}
\varepsilon \left( \iota _{\Sscript{A}}\left( a\right) \right)   =a, \qquad \varepsilon \left( \iota _{\Sscript{L}}\left( X\right) \right) =0, \\
\Delta \left( \iota _{\Sscript{A}}\left( a\right) \right)  =\iota _{\Sscript{A}}\left( a\right) \times _{\Sscript{A}}1_{\Sscript{ \cV_{\Sscript{A}}\left( L\right)}
}=1_{ \Sscript{\cV_A( L)} }\times _{\Sscript{A}}\iota _{\Sscript{A}}\left( a\right) , \\
\Delta \left( \iota _{\Sscript{L}}\left( X\right) \right)  =\iota _{\Sscript{L}}\left( X\right) \times _{\Sscript{A}}1_{ \Sscript{\cV_A\left( L\right)}
}+1_{\Sscript{\cV_A\left( L\right)} }\times _{\Sscript{A}}\iota _{\Sscript{L}}\left( X\right) , \\
\beta^{-1}\left( 1_{\Sscript{ \cV_A\left( L\right) }} \tensor{A } \iota_{\Sscript{A} }\left( a \right)\right)   = \iota _{\Sscript{A}}\left( a\right) \tensor{A}1_{\Sscript{ \cV_A\left( L\right)} }=1_{\Sscript{ \cV_A\left( L\right) }}\tensor{A}\iota _{\Sscript{A}}\left( a\right) ,  \\
\beta^{-1}\left( 1_{ \Sscript{\cV_A\left( L\right) }} \tensor{A } \iota_{\Sscript{L }}\left( X \right)\right)   = 1_{ \Sscript{\cV_A\left( L\right) }}\tensor{A}\iota _{\Sscript{L}}\left( X\right) -\iota
_{\Sscript{L}}\left( X\right) \tensor{A}1_{\Sscript{ \cV_A\left( L\right)} }.
\end{gather*}
Another realization for $\cV_{\Sscript{A}}(L)$ can be obtained as a quotient of the smash product (right) $A$-bialgebroid $A\#U_{\K}(L)$, as introduced firstly by Sweedler in \cite{sweedler}, by the ideal $\cI:=\langle a\#X- 1\#aX \mid a\in A, X\in L\rangle$.

\begin{example}\label{exam:laventana}
The first Weyl algebra considered in Example \ref{exam:URSO}, is  in fact the universal enveloping Hopf algebroid of the Lie-Rinehart algebra $(A, \mathrm{Der}_{\Sscript{\K}}(A))$, where $A=\K[X]$ is the polynomial algebra. 
\end{example}

\subsection{Admissible filtrations on general rings.}\label{ssec:AFiltration}
Let $\cU$ be an $A$-ring or, equivalently, let $A \to \cU$ be a ring extension\footnote{Even if we plan to apply the results of this subsection to a co-commutative Hopf algebroid $(A,\cU)$, these are interesting on their own and this justifies the choice of presenting them in the present form.}. 
Throughout this subsection we assume $A$ to be trivially filtered (i.e., $F^{n}A=A$ for all $ n\geq 0$, zero otherwise) and we refer to Appendix \ref{sec:TGFr} for a more detailed treatment in the framework of filtered bimodules.  Following \cite[Definition A.5.1]{Kapranov:2007}, we say that $\cU$ has a \emph{right admissible filtration}, if $A \subset \cU$ (as a ring) and there is  an increasing filtration $\cU=\bigcup_{\Sscript{n \, \in \, \N }} F^{n}\cU$ of $A$-subbimodules such that $F^0\cU=A$, $F^n\cU \,.\, F^m\cU \subseteq F^{n+m}\cU$ and each one of the quotient modules in $\big\{F^n\cU/F^{n-1}\cU\big\}_{\Sscript{n \geq 0}}$ is a finitely generated and projective right $A$-module. We will denote by  $\taun: F^n\cU \to \cU$ and by $\greekn{\tau}{n,\,m}{}:F^n\cU\to F^{m}\cU$ the canonical inclusions for $m\geq n\geq 0$. Notice that $\cU$ can be identified with the direct limit of the system $\{F^n\cU,\tau{\Sscript{n,m}}\}$, that is: $\cU=\injlimit{n}{F^n\cU}$. Thus, the underlying $A$-bimodule ${}_{\Sscript{A}}\cU_{\Sscript{A}}$ is an increasingly filtered $A$-bimodule which is locally finitely generated and projective as a right $A$-module by definition (see \S\ref{ssec:LFGr}).
The following Proposition summarizes the properties of rings with an admissible filtration.

\begin{proposition}\label{prop:FnUfgp}
Let $\cU$ be an $A$-ring endowed with a right admissible filtration $\left\{F^n\cU\mid n\geq 0\right\}$. The following properties hold true
\begin{enumerate}
\item Each of the structural maps $\tau_{\Sscript{n,n+1}}: F^n\cU \to F^{n+1}\cU$ is a split monomorphism of right $A$-modules. In particular, each of the submodules  $F^n\cU$ is a finitely generated and projective right $A$-module and each of the monomorphisms $\taun: F^n\cU \to \cU$ splits in right $A$-modules with retraction $\thetan$.
\item As a filtered right $A$-module, $\cU$ satisfies
$$
\cU \cong gr(\cU)\,=\, A\oplus \frac{F_1\cU}{A}\oplus \frac{F_2\cU}{F_1\cU}\oplus \cdots \oplus \frac{F_{n}\cU}{F_{n-1}\cU}\oplus \cdots
$$
In particular $\cU$ is a faithfully flat right $A$-module.
\end{enumerate}
\end{proposition}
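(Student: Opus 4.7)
The plan is to prove both statements inductively, using the defining hypothesis that each $F^n\cU/F^{n-1}\cU$ is finitely generated and projective as a right $A$-module.

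For part (1), I would first consider the short exact sequence of right $A$-modules
\begin{equation*}
0 \longrightarrow F^{n-1}\cU \xrightarrow{\ \tau_{n-1,n}\ } F^n\cU \longrightarrow F^n\cU/F^{n-1}\cU \longrightarrow 0.
\end{equation*}
Since the quotient is projective by hypothesis, the sequence splits; this already gives the first assertion. Combining this with the base case $F^0\cU = A$ (which is trivially f.g.\ projective) and induction on $n$, each $F^n\cU$ is f.g.\ projective as a right $A$-module, being the direct sum $F^{n-1}\cU \oplus F^n\cU/F^{n-1}\cU$ of two such modules.

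For the retraction $\theta_n \colon \cU \to F^n\cU$ of $\tau_n$, I would construct compatible retractions along the filtration. Pick a splitting $\sigma_k \colon F^k\cU/F^{k-1}\cU \to F^k\cU$ for each $k \geq 1$ together with the associated retraction $\rho_k \colon F^k\cU \to F^{k-1}\cU$, so that $F^k\cU = \tau_{k-1,k}(F^{k-1}\cU) \oplus \sigma_k(F^k\cU/F^{k-1}\cU)$. Iterating yields right $A$-linear decompositions
\begin{equation*}
F^n\cU \;\cong\; A \oplus \bigoplus_{k=1}^{n} \frac{F^k\cU}{F^{k-1}\cU}
\end{equation*}
that are compatible with the structural maps $\tau_{n,m}$. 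Passing to the colimit $\cU = \varinjlim_n F^n\cU$ delivers a right $A$-linear isomorphism $\cU \cong \bigoplus_{k\geq 0} F^k\cU/F^{k-1}\cU$, and the projection onto the first $n+1$ summands is the desired retraction $\theta_n$.

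Part (2) is then essentially a corollary: the colimit isomorphism established above is exactly $\cU \cong gr(\cU)$ as filtered right $A$-modules. For faithful flatness, I would observe that each summand $F^k\cU/F^{k-1}\cU$ is f.g.\ projective, hence $\cU$ is a direct sum of projective right $A$-modules and therefore flat. Moreover, the summand for $k=0$ is $A$ itself, so $A$ is a right $A$-module direct summand of $\cU$; for any left $A$-module $M$, then $\cU \tensor{A} M$ contains $M$ as a direct summand, so $\cU \tensor{A} M = 0$ forces $M = 0$. Combined with flatness, this yields faithful flatness. The only genuinely delicate point is ensuring that the splittings chosen at successive stages are compatible enough to assemble into the retractions $\theta_n$ and the global decomposition, but this is achieved by the inductive choice of the $\sigma_k$'s above, so I do not expect any real obstacle.
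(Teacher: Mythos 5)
Your proof is correct and follows essentially the same route as the paper, whose argument is exactly the content of Lemma \ref{lemma:fgpquotients}: the short exact sequence $0\to F^{n-1}\cU\to F^n\cU\to F^n\cU/F^{n-1}\cU\to 0$ splits by projectivity of the quotient, induction gives $F^n\cU\cong\bigoplus_{k=0}^n F^k\cU/F^{k-1}\cU$, and passing to the exhaustive colimit yields $\cU\cong\gr(\cU)$; the faithful flatness argument (direct sum of the faithfully flat summand $A$ with a flat module) is also the paper's. The one point where you genuinely diverge is the construction of the retraction $\thetan$: the paper obtains it indirectly, by first showing (Corollary \ref{coro:FnL}) that the transpose $\taun^*:\cU^*\to F^n\cU^*$ is surjective via an inverse-limit argument, and then (Remark \ref{rem:vartheta}) using finite generation and projectivity of $F^n\cU^*$ to split $\taun^*$ and dualizing back; your construction of $\thetan$ as the projection onto the first $n+1$ summands of the compatible decomposition is more direct, avoids the double dual, and is perfectly valid --- the compatibility you flag as the delicate point does hold, since the decomposition of $F^{n+1}\cU$ is by construction obtained from that of $F^n\cU$ by adjoining one summand, so $\tau_{\Sscript{n,n+1}}$ is the inclusion of an initial block of summands.
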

\begin{proof}
Follows from  Lemma \ref{lemma:fgpquotients}, Corollary \ref{coro:FnL} and Remark \ref{rem:vartheta} of the Appendices. The faithfully flatness is a consequence of the fact that $\cU$ is the direct sum of the faithfully flat right $A$-module $A$ and the flat right $A$-module $\bigoplus_{n\in\N}F_{n+1}\cU/F_n\cU$ (see e.g. \cite[Proposition 9, I \S 3]{Bou:AC12}).
\end{proof}

\begin{remark}\label{rem:dualbasis}
Given a right admissible filtration $\left\{F^n\cU\mid n\geq 0\right\}$ on an $A$-ring $\cU$, it follows from Lemma \ref{lemma:fgpquotients} that we have right $A$-linear isomorphisms $\psi_n:F^n\cU\cong\bigoplus_{k=0}^n F^k\cU/F^{k-1}\cU$ for all $n\geq 0$. Let us fix a dual basis $\left\{\left(u_i^n,\gamma_i^n\right)\mid i=1,\ldots,d'_n\right\}$ for every $F^n\cU/F^{n-1}\cU$, $n\geq 0$. These induce a distinguished dual basis $\left\{\left(e_i^n,\lambda_i^n\right)\mid i=1,\ldots,d_n:=\sum_{j=0}^nd'_j\right\}$ on $F^n\cU$ for all $n\geq 0$, which is given as follows. The generating set $\left\{e_i^n\mid i=1,\ldots,d_n\right\}$ is given by $\left\{\psi_n^{-1}\left(u_i^k\right)\mid k=0,\ldots,n, i=1,\ldots,d'_k\right\}$, that is $\psi_n^{-1}\left(u_i^k\right)=e_{i+d_{k-1}}^n$ for all $0\leq k\leq n$ and all $1\leq i\leq d_k'$ ($d_{-1}=0$ by convention). The dual elements are given by extending $\gamma_i^k:F^k\cU/F^{k-1}\cU\to A$ to $\left(\gamma'\right)_i^k:\bigoplus_{k=0}^nF^k\cU/F^{k-1}\cU\to A$, letting $\left(\gamma'\right)_i^k\mid_{F^h\cU/F^{h-1}\cU}=0$ for $h\neq k$, and then composing with $\psi_n$, i.e., $\left(\gamma'\right)_i^k\circ\psi_n=\lambda^n_{i+d_{k-1}}$ for all $k,i$ as above.

This distinguished dual basis has the following useful property which will be helpful in the sequel. Let us denote by $\tau_{m,n}\colon F^{m}\cU\to F^n\cU$ and $j_{m,n}\colon \bigoplus_{h=0}^m F^h\cU/F^{h-1}\cU\to \bigoplus_{k=0}^n F^k\cU/F^{k-1}\cU$ the inclusion morphisms for $m\leq n$. Then $\psi_n\circ \tau_{m,n}=j_{m,n}\circ \psi_m$, whence $\tau_{m,n}\left(e^m_i\right)=e^n_i$ for all $i=1,\ldots,d_m$ and $\lambda^n_i\circ\tau_{m,n}=\lambda^m_i$ if $i=1,\ldots,d_m$, $\lambda^n_i\circ\tau_{m,n}=0$ otherwise.
\end{remark}

\subsection{A filtration on the convolution algebra $\cU^*$ of a co-commutative Hopf algebroid}\label{ssec:FUstra}

The idea for this example came to us from \cite[A.5.8]{Kapranov:2007}. Let $(A,\cU)$ be a co-commutative (right) Hopf algebroid.  We say that $(A,\cU)$ is  endowed with a \emph{(right) admissible filtration} if the $A$-ring $\cU$ has a right admissible filtration $\cU=\bigcup_{\Sscript{n \, \in \, \N }} F^{n}\cU$ as in \S\ref{ssec:AFiltration},  which is also compatible with the comultiplication in the sense that it satisfies
$$
\Delta(F^n\cU) \subseteq \sum_{p+q=n}\Img{F^p\cU_{\Sscript{A}} \tensor{A} F^q\cU_{\Sscript{A}}}=\sum_{p+q=n}F^p\cU_{\Sscript{A}} \tensor{A} F^q\cU_{\Sscript{A}}=F^n\left(\cU_{\Sscript{A}} \tensor{A} \cU_{\Sscript{A}}\right)
$$
(the counit is automatically filtered). The inclusion $\varfun{\tau_0}{A}{\cU}$ plays the role of the morphisms $s=t$.

\begin{example}\label{exm:ULAdmiss}
As in \S\ref{ssec:ULA}, consider the universal enveloping Hopf algebroid $\cU:=\cV_{\Sscript{A}}(L)$ of a given Lie-Rinehart algebra $(A,L)$. Since the tensor $A$-ring  $T_{\Sscript{A}}\left( A\otimes _{\K }L\right) $ is endowed with an increasing filtration:
\begin{equation*}
F^{n}\left( T_{\Sscript{A}}\left( A\tensor{\K }L\right) \right)
:=\bigoplus_{k=0}^{n}T_{\Sscript{A}}\left( A\tensor{\K }L\right) ^{k},
\end{equation*}
where $T_{\Sscript{A}}\left( A\tensor{\K }L\right) ^{k}:=\left( A\tensor{\K
}L\right) \tensor{A}\left( A\tensor{\K }L\right) \tensor{A}\cdots
\tensor{A}\left( A\tensor{\K }L\right)$ for $k$ times, this induces a filtration on $\cU$ via the canonical projection. Thus,  the $n$-th term of the filtration  $F^n\cU$ is the right $A$-module generated by the products of the images of elements of $L$ in  $\cU$ of length at most $n$.
If we assume as usual that $A$ is trivially filtered then both $\iota _{A}:A\rightarrow \mathcal{U}$ and $\varepsilon :\mathcal{U} \rightarrow A$
are filtered. Moreover, from
\begin{equation*}
\Delta \left( \iota _{L}\left( X\right) \right)  =\iota _{L}\left( X\right)
\tensor{A}1_{\mathcal{U} }+1_{\mathcal{U}
}\tensor{A}\iota _{L}\left( X\right)
\, \in \, \Img{F^{\Sscript{1}}\mathcal{U}
\tensor{A}F^{\Sscript{0}}\mathcal{U}} +\Img{F^{\Sscript{0}}\mathcal{U}
\tensor{A}F^{\Sscript{1}}\mathcal{U}} =F^{\Sscript{1}}\left( \mathcal{U} \tensor{A}\mathcal{U} \right)
\end{equation*}
it follows that
\begin{equation*}
\Delta \left( F^{n}\mathcal{U} \right) \subseteq
\sum_{k=0}^{n}\Delta \left( \iota _{L}\left( L\right) ^{k}\right) \subseteq
\sum_{k=0}^{n}\Delta \left( \iota _{L}\left( L\right) \right) ^{k}\subseteq
\sum_{k=0}^{n}F^{k}\left( \mathcal{U} \tensor{A}\mathcal{U} \right) \subseteq F^{n}\left( \mathcal{U}
\tensor{A}\mathcal{U} \right)
\end{equation*}
(notice that this makes sense since $\mathrm{Im}\left( \Delta \right)
\subseteq \left. \mathcal{U} \right. \times
_{A}\left. \mathcal{U} \right. $, which is a filtered algebra with filtration induced by the one of $\mathcal{U} \tensor{A}\mathcal{U}$). Summing up, $\mathcal{U} $ is a filtered co-commutative Hopf algebroid. Furthermore, it
is well-known that if $L$ is a projective right $A$-module, then we have a graded isomorphism of $A$-algebras $\mathrm{gr}\mathcal{U} \cong S_{\Sscript{A}}\left( L\right)$, the symmetric algebra of $L_{\Sscript{A}}$ (see e.g. \cite[Theorem 3.1]{Rin:DFOGCA}). From this, one deduces that if $L$ is also finitely generated as right $A$-module, then the quotient modules $F^{n}\mathcal{U} /F^{n-1}
\mathcal{U} $ are finitely generated and projective as right $
A$-modules as well. Therefore, under these additional hypotheses, $\mathcal{U}$ turns out to be a co-commutative right Hopf algebroid endowed with an admissible filtration.
\end{example}

We are going to show in the forthcoming subsections that the convolution algebra $\cU^*$ of a co-commutative right Hopf algebroid $(A,\cU)$ with an admissible filtration is a projective limit of algebras and a complete Hopf algebroid, where the comultiplication $\Delta_*: \cU^{*} \to \cU^{*}_{\Sscript{A}}~ \cmptens{A}~{^{}_{\Sscript{A}}\cU^*}$ is induced  by the multiplication of $\cU$, $A$ is trivially filtered and $\cU^{*}$ is considered as an $A$-bimodule via the source and the target induced by the algebra map $\vartheta: A\tensor{}A \to \cU^*$ of equation \eqref{Eq:vartheta}. The counit will be the map $\varfun{\varepsilon_*}{\cU^*}{A}$ such that $f \mapsto f(1)$. The construction of the antipode for $\cU^*$  will require an additional hypothesis and it will be treated separately in \S\ref{ssec:AUstra}.

Notice that $\cU^*\cong  \prlimit{n}{F^n\cU^*}$ as $A$-bimodules via the isomorphism
\begin{equation}\label{eq:iso*}
\sfun{\Phi}{\cU^*}{\prlimit{n}{(F^n\cU)^*}}{f}{\left(\tau_n^*(f)\right)_{n\geq 0}};\;\; \prlimit{n}{(F^n\cU)^*} \rightarrow \cU^*; \; \left( (g_n)_{\Sscript{n \geq 0}} \mapsto g:=\injlimit{n}{g_n} \right),
\end{equation}
and it can be endowed with a natural decreasing filtration
\begin{equation}\label{Eq:FUstar}
F_0\cU^*:=\cU^* \quad \text{ and } \quad  F_{n+1}\cU^*:=\ker{\taun^*}=\mathsf{Ann}(F^n\cU)
\end{equation}
(see \S\ref{ssec:DLF} for the general case). Besides, $\cU^*$ is also a projective limit of $(A\otimes_{}A)$-algebras, where the projective system is endowed with the algebra maps $\vartheta_n=(\tau_n)^* \circ \vartheta: A\otimes_{}A \to (F^n\cU)^*$ and where $(F^n\cU)^*$ is the convolution algebra of the $A$-coalgebra  $(F^n\cU)_{\Sscript{A}}$.

\begin{lemma}\label{lemma:U*complete}
The pair $\left(\cU^*,F_n\cU^*\right)$ gives a complete $A$-bimodule as well as a complete algebra.
\end{lemma}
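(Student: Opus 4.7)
The plan is to verify the two requirements for $(\cU^*, \{F_n\cU^*\})$ to be a complete $A$-bimodule — namely Hausdorff-ness and isomorphism with the projective limit of its quotients — by exploiting the identification $\Phi\colon\cU^*\to\prlimit{n}{(F^n\cU)^*}$ recorded in \eqref{eq:iso*}, and then to check separately that the filtration is submultiplicative with respect to the convolution product, so that $(\cU^*, \{F_n\cU^*\})$ is also a filtered, hence complete, algebra.

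First I would observe that, since each $F^{n-1}\cU$ is an $A$-sub-bimodule of $\cU$, its annihilator $F_n\cU^*=\mathsf{Ann}(F^{n-1}\cU)$ is an $A$-sub-bimodule of $\cU^*$; given that $A$ is trivially filtered, this already shows that the filtration is $A$-bilinear in the filtered sense. The Hausdorff condition follows at once: any $f\in\bigcap_{n\geq 0}F_n\cU^*$ must vanish on $\bigcup_{n\geq 0}F^n\cU=\cU$, whence $f=0$.

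For the limit part of completeness, I would invoke Proposition \ref{prop:FnUfgp}(1): each inclusion $\taun\colon F^n\cU\to\cU$ admits a right $A$-linear retraction, so $\taun^*\colon\cU^*\to(F^n\cU)^*$ is surjective with kernel precisely $F_{n+1}\cU^*$, yielding canonical isomorphisms $\cU^*/F_{n+1}\cU^*\cong(F^n\cU)^*$. Composing with $\Phi$ then gives $\cU^*\cong\prlimit{n}{(F^n\cU)^*}\cong\prlimit{n}{\cU^*/F_{n+1}\cU^*}$, which is exactly what is required (the indexing shift is harmless since the two projective systems are cofinal).

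Finally, the unit $\varepsilon\in\cU^*$ sits trivially in $F_0\cU^*=\cU^*$, so the only remaining point is submultiplicativity of the filtration under convolution. For $f\in F_n\cU^*$, $g\in F_m\cU^*$ with $n,m\geq 1$, and for any $u\in F^{n+m-1}\cU$, the compatibility $\Delta(F^{n+m-1}\cU)\subseteq\sum_{p+q=n+m-1}F^p\cU\tensor{A}F^q\cU$ of the admissible filtration forces, in every summand of $\Delta(u)=u_{\Sscript{1}}\tensor{A}u_{\Sscript{2}}$, either $p\leq n-1$ (so $f(u_{\Sscript{1}})=0$) or $q\leq m-1$ (so $g(u_{\Sscript{2}})=0$), whence $(f*g)(u)=0$ and $f*g\in F_{n+m}\cU^*$. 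The main — and essentially only — delicate point is the bookkeeping with the index shift $F_{n+1}\cU^*=\mathsf{Ann}(F^n\cU)$ in this submultiplicativity calculation; everything else reduces cleanly to Proposition \ref{prop:FnUfgp} together with the identification \eqref{eq:iso*}.
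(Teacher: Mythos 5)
Your proof is correct and follows essentially the same route as the paper: the complete-bimodule part is obtained from the split surjections $\taun^*$ together with the identification \eqref{eq:iso*} (the paper delegates exactly this to Appendix \ref{ssec:DLF}), and your submultiplicativity computation for $n,m\geq 1$ via the compatibility of $\Delta$ with the admissible filtration is the same one the paper carries out. The only point you leave implicit is the case $nm=0$, which the paper settles by observing that the $\ann{F^{n-1}\cU}$ are ideals — a one-line variant of your own argument, since for $u\in F^{n-1}\cU$ every summand of $\Delta(u)$ already lies in $F^{p}\cU\tensor{A}F^{q}\cU$ with $p\leq n-1$.
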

\begin{proof}
In \S\ref{ssec:DLF} it is proved that it is a complete $A$-bimodule and so a complete module over $\K$ as well. In light of Remark \ref{rem:calg}, it is enough for us to prove that the filtration $F_n\cU^*$ is compatible with the convolution product to conclude the proof. Notice that the $\ann{F^n\cU}$'s are ideals, whence we have that $F_n\cU^**F_m\cU^*\subseteq F_{n+m}\cU^*$ whenever $n$ or $m$ is $0$. If $mn>0$ then, given $f\in F_n\cU^*$ and $g\in F_m\cU^*$, we have that $(f*g)\left(F^{m+n-1}\cU\right)\subseteq \sum_{p+q=n+m-1}f\left(F^p\cU\right)g\left(F^q\cU\right)=0$, because when $p\geq n$ it happens that $q=m+n-1-p\leq m-1$. Therefore, $f*g\in \ann{F^{n+m-1}\cU}=F_{n+m}\cU^*$ and hence $F_n\cU^**F_m\cU^*\subseteq F_{n+m}\cU^*$ for all $m,n\geq 0$. Recalling that we consider $A$ trivially filtered, we have that $F_n\cU^*$ induces on $\cU^*$ a filtration as a algebra and as an $A$-bimodule at the same time.
\end{proof}

\begin{remark}\label{rem:Kstar}
We already know that the convolution algebra $\cU^*$ is an augmented one and the augmentation is given by the algebra map (which is going to be the counit) $\varepsilon_*: \cU^* \to A$, $f \mapsto f(1)$. Therefore, one can consider the $\cI$-adic topology on $\cU^*$ with respect to the two-sided ideal $\cI:=\ker{\varepsilon_*}$.  However, in the filtration of Lemma \ref{lemma:U*complete} we have $\cI=F_1\cU^*$, and so $\cI^n \subseteq F_n\cU^*$, for every $n \geq 0$. Thus the $\cI$-adic topology is finer than the linear topology obtained from the filtration $\{F_n\cU^*\mid n \geq 0\}$ of Equation \eqref{Eq:FUstar}.
\end{remark}

\subsection{The topological comultiplication and counit of $\cU^*$}\label{ssec:CUUstar}
Next we want to show that the multiplication $\varfun{\mu}{\cU\otimes\cU}{\cU}$ induces a comultiplication $\varfun{\Delta_*}{\cU^*}{\cU^*\cmptens{A}\cU^*}$ which endows $\cU^*$ with a structure of comonoid   in the monoidal category of  complete bimodules $\left(\CBim{A}{A},\what{\otimes}_{\Sscript{A}},A,\alpha\right)$ where $A$ is trivially filtered. We will often make use of the notation and conventions of \S\ref{Not:infty} and Remark \ref{rem:limits}. The unaccustomed reader is invited to go through them before proceeding.
 
Let us perform the tensor product $\cU_{\Sscript{A}}\tensor{A}{_{\Sscript{A}}\cU}$. The multiplication $\mu:\cU\otimes \cU\to\cU$ which gives the algebra structure to $\cU$ factors through the tensor product over $A$:
\begin{equation*}
\mu\left(x\tau_0(a)\otimes y\right)=x\tau_0(a)y=\mu(x\otimes \tau_0(a)y),
\end{equation*}
and it is $A$-linear with respect to both $A$-actions on $ \cU_{\Sscript{A}}\tensor{A}{_{\Sscript{A}}\cU}$, namely
\begin{equation*}
a(x\tensor{A} y)=(\tau_0(a)x)\tensor{A} y \quad \text{and} \quad (x\tensor{A} y)a=x\tensor{A} (y\tau_0(a)).
\end{equation*}
Therefore, it induces a filtered $A$-bilinear morphism $\mu^*:\cU^*\to (\cU\tensor{A}\cU)^*$ and maps $\mu_{\Sscript{n,m}}:F^n\cU_{\Sscript{A}}\tensor{A}\, {{ }_{\Sscript{A}}F^m\cU}\to F^{n+m}\cU$, which dually give rise to a family of morphisms of $A$-bimodules
\begin{equation*}
\xymatrix{ \Delta_{n,m}:  \cU^*  \ar@{->}^-{\tau_{\Sscript{m+n}}^*}[r] & \left(F^{m+n}\cU\right)^* \ar@{->}^-{\mnm^*}[r] &  \left(F^n\cU_{\Sscript{A}}\tensor{A}{_{\Sscript{A}}F^m\cU}\right)^*, }
\end{equation*}
for every $n,m \geq 0$ such that $\Delta_{n,m}(f)\left(x\tensor{A} y\right)=f(xy)$ for all $f\in\cU^*$, $x\in F^n\cU$ and $y\in F^m\cU$. Given $f \in \cU^*$, for each element $u \in \cU$ we define $f\leftharpoonup u: \cU_{\Sscript{A}} \to A_{\Sscript{A}}$ to be the linear map which acts as $v \mapsto f(uv)$.

\begin{lemma}\label{lema:Deltamn}
For any $f \in \cU^*$ and for all $n,m\in\N$, we have 
\begin{equation}\label{eq:delta}
\big(\phi^{-1}_{m,n} \circ \Delta_{n,m}\big)(f)=\sum_{k=1}^{d_n}\tau_m^*\big(f\leftharpoonup \tau_n\big(e^n_k\big)\big)\tensor{A}\lambda^n_k  \, \in  \left(F^m\cU\right)^*{}_{\Sscript{A}}\tensor{A}{}_{\Sscript{A}}\left(F^n\cU\right)^*
\end{equation}
where  $\left\{e_k^n,\lambda_k^n\mid k=1,\cdots,d_n\right\}$ is the dual basis of $(F^n\cU)_{\Sscript{A}}$ given in Remark \ref{rem:dualbasis} and $\phi_{m,n}:\left(F^mM^*\right)^{}_{\Sscript{R}}\tensor{R} {^{}_{\Sscript{R}}\left(F^nM^*\right)} \cong \left(F^nM_{\Sscript{R}}\tensor{R} {{}_{\Sscript{R}}F^mM}\right)^*$ are the canonical isomorphisms (see Corollary \ref{coro:FnL}).
\end{lemma}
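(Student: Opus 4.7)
My plan is to prove the identity by directly evaluating the right-hand side against an arbitrary element $x\tensor{A}y\in F^n\cU_{\Sscript{A}}\tensor{A}{}_{\Sscript{A}}F^m\cU$ after applying $\phi_{m,n}$, and showing the result agrees with $\Delta_{n,m}(f)(x\tensor{A}y)=f(xy)$.

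First, I would recall from the referenced Corollary (and the proof of the dual basis lemma) the explicit form of the canonical isomorphism
$$\phi_{m,n}\colon\left(F^m\cU\right)^*{}_{\Sscript{A}}\tensor{A}{}_{\Sscript{A}}\left(F^n\cU\right)^* \cong \left(F^n\cU_{\Sscript{A}}\tensor{A}{}_{\Sscript{A}}F^m\cU\right)^*;$$
explicitly, $\phi_{m,n}(g\tensor{A}h)(x\tensor{A}y)=g\bigl(h(x)\cdot y\bigr)$, where $h(x)\in A$ acts on $y$ through the left $A$-action (i.e.\ via $\tau_0$), and this is well-defined precisely because the balancing uses the right $A$-action on $(F^m\cU)^*$ and the left $A$-action on $(F^n\cU)^*$, while $A$ is commutative.

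Next, I would apply $\phi_{m,n}$ to the proposed right-hand side and carry out the evaluation step by step:
\begin{align*}
\phi_{m,n}\!\left(\sum_{k=1}^{d_n}\tau_m^*\bigl(f\leftharpoonup\tau_n(e_k^n)\bigr)\tensor{A}\lambda_k^n\right)\!(x\tensor{A}y)
&=\sum_{k=1}^{d_n}\bigl(f\leftharpoonup\tau_n(e_k^n)\bigr)\bigl(\tau_m(\lambda_k^n(x)\cdot y)\bigr)\\
&=\sum_{k=1}^{d_n}f\bigl(\tau_n(e_k^n)\,\tau_0(\lambda_k^n(x))\,\tau_m(y)\bigr).
\end{align*}
Using that $\tau_n$ is right $A$-linear (so $\tau_n(e_k^n)\tau_0(\lambda_k^n(x))=\tau_n(e_k^n\cdot\lambda_k^n(x))$) and the dual basis identity $x=\sum_{k=1}^{d_n}e_k^n\cdot\lambda_k^n(x)$ in $F^n\cU_A$, the inner sum collapses to $\tau_n(x)$, yielding $f(\tau_n(x)\tau_m(y))=f(xy)=\Delta_{n,m}(f)(x\tensor{A}y)$.

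The only real subtlety, and the step I expect will require the most care, is ensuring that the use of the dual basis and the factorisation of $\tau_0(\lambda_k^n(x))$ through the source/target map $\tau_0:A\to\cU$ is compatible with the $A$-bimodule structures involved in the various tensor products (right on $F^n\cU$, left on $F^m\cU$, right on $(F^m\cU)^*$, left on $(F^n\cU)^*$). Once the convention for $\phi_{m,n}$ is pinned down as above, commutativity of $A$ and the universal property of the tensor product over $A$ make every intermediate step well-defined, and the identity follows. Since everything reduces to finite sums coming from the dual basis of Remark \ref{rem:dualbasis}, no completion/topological issues enter the argument.
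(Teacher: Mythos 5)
Your proof is correct and is precisely the intended argument: the paper states Lemma \ref{lema:Deltamn} without proof, and the evident verification is exactly what you carry out, namely applying $\phi_{m,n}$ to the right-hand side, evaluating on a generic $x\tensor{A}y$, and collapsing the sum via the dual-basis identity $\sum_k e_k^n\,\lambda_k^n(x)=x$ together with the $A$-bilinearity of $\tau_n$ and the associativity of the product of $\cU$. Your handling of the bimodule conventions (left action on $F^m\cU$ via $\tau_0$, right $A$-linearity of $f\leftharpoonup\tau_n(e_k^n)$) matches the conventions of Corollary \ref{coro:FnL} and \S\ref{ssec:DLF}, so nothing is missing.
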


As a consequence of Lemma \ref{lema:Deltamn} and of the fact that $(\cU\tensor{A}\cU)^*\cong \cU^*\cmptens{A}\cU^*$ as filtered bimodules via the completion of canonical map $\phi_{\Sscript{\cU,\,\cU}}$ (see Proposition \ref{prop:MRN} and diagram \eqref{diag:Delta*} below), we have an $A$-bilinear comultiplication
\begin{equation*}
\varfun{\Delta_{*}:={\psiup_{\Sscript{\cU,\cU}} } \circ \mu^*}{\cU^*}{(\cU^*)_{\Sscript{A}} ~ \cmptens{A} ~ {_{\Sscript{A}}(\cU^*)}}
\end{equation*}
which makes the following diagram to commute
\begin{equation}\label{diag:Delta*}
\xymatrix @C=40pt @R=20pt{
\cU^* \ar@{-->}[rr]^-{\Delta_*} \ar[dr]_{\mu^*} \ar[dd]_{\tau_{m+n}^*} & & \cU^*\cmptens{A}\cU^* \ar[dd]^-{\Pi_{m,n}} \ar@<-2pt>[dl]_-{\what{\phi_{\cU,\cU}}}\\
 & (\cU\tensor{A}\cU)^* \ar@<-2pt>[ur]_-{\psiup_{\cU,\cU}} \ar[d]^-{(\tau_n\tensor{A}\tau_m)^*} & \\
F^{n+m}\cU^* \ar[r]^-{\mu_{n,m}^*} & (F^n\cU\tensor{A}F^m\cU)^*  & F^m\cU^*\tensor{A}F^n\cU^* \ar[l]_-{\phi_{m,n}}}
\end{equation}
for all $m,n\geq0$ where the projections $\Pi_{\Sscript{m,n}}$ make of $\cU^*\cmptens{A}\cU^* $ the projective limit of the projective system $\left(F^m\cU^*\tensor{A}F^n\cU^*, \tau_{\Sscript{p,\,m}}^{*}\tensor{A}\tau_{{\Sscript{q,\,n}}}^{*}\right)$, see \S\ref{ssec:TTAC}.
Thanks to relations \eqref{eq:zlim} of the Appendices and \eqref{eq:delta} and by resorting to the notations introduced in Remark \ref{rem:dualbasis}, one may write explicitly
\begin{equation}\label{eq:deltastar}
\Delta_{*}(f) \,=\,\limn\left(  \sum_{i=1}^{d_n} \big(f\leftharpoonup \tau_n\big(e_i^n\big)\big) \tensor{A}E_{\scriptstyle{\lambda_i^n}} \right),
\end{equation}
where we set $E_{\scriptstyle{\lambda_i^n}}:=\thetan^*(\lambda^n_i)$ (recall that for $g\in \cU^*$ we have that $g-\thetan^*\taun^*(g)\in\ker{\taun^*}=F_{n+1}\cU^*$).
Furthermore, the comultiplication $\Delta_{*}$ is uniquely determined by the following rule.  For every $f \in \cU^*$,
\begin{equation}\label{Eq:fuv}
\Delta_*(f) =\limn\left( \sum_{(f)} f_{\Sscript{(1), \,n}} \tensor{A} f_{\Sscript{(2),\, n}}\right) \, \Longleftrightarrow \left[ f(uv) \,=\, \limn \left(\sum_{(f)}  f_{\Sscript{(1),\, n}}\big( f_{\Sscript{(2),\, n}}(u) v \big)\right), \; \text{for every } u, v \in \cU \right].
\end{equation}

Now we can state the subsequent lemma.

\begin{lemma}\label{lemma:deltaepsialg}
The comultiplication $\varfun{\Delta_*}{\cU^*}{\cU^*\cmptens{A}\cU^*}$ is a morphism of filtered $A$-bimodules when $\cU^*\cmptens{A}\cU^*$ is endowed with its canonical filtration. Both the comultiplication $\varfun{\Delta_*}{\cU^*}{\cU^*\cmptens{A}\cU^*}$ and the counit $\varfun{\varepsilon_*}{\cU^*}{A}$ are morphisms of complete algebras.
\end{lemma}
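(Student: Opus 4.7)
The plan is to tackle the three assertions in turn, exploiting the factorization $\Delta_{*}=\what{\psiup_{\cU,\cU}}\circ \mu^{*}$ from diagram \eqref{diag:Delta*} together with the characterizations \eqref{eq:deltastar} and \eqref{Eq:fuv}.

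First, for the filtered property of $\Delta_{*}$, the key point is that under the filtered isomorphism $\what{\psiup_{\cU,\cU}}:(\cU\tensor{A}\cU)^{*}\xrightarrow{\cong}\cU^{*}\cmptens{A}\cU^{*}$ supplied by Proposition \ref{prop:MRN}, the canonical filtration on the right-hand side corresponds to the annihilator filtration $F_{n}(\cU\tensor{A}\cU)^{*}=\ann{F^{n-1}(\cU\tensor{A}\cU)}$ on the left. Since $F^{n-1}(\cU\tensor{A}\cU)=\sum_{p+q\leq n-1}\Img{F^{p}\cU\tensor{A}F^{q}\cU}$ and $\mu(F^{p}\cU\tensor{A}F^{q}\cU)\subseteq F^{p+q}\cU$ by admissibility of the filtration, any $f\in F_{n}\cU^{*}=\ann{F^{n-1}\cU}$ gives $\mu^{*}(f)=f\circ\mu$ vanishing on $F^{n-1}(\cU\tensor{A}\cU)$. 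Hence $\Delta_{*}(F_{n}\cU^{*})\subseteq F_{n}(\cU^{*}\cmptens{A}\cU^{*})$, so $\Delta_{*}$ is filtered and therefore continuous.

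Second, for $\Delta_{*}$ to be an algebra morphism, I would dualize the bialgebroid compatibility. Explicitly, the bialgebroid axiom $\Delta(uv)=u_{1}v_{1}\tensor{A}u_{2}v_{2}$ expresses exactly that $\mu:\cU\tensor{A}\cU\to\cU$ is a morphism of $A$-coalgebras, when $\cU\tensor{A}\cU$ is given the tensor coalgebra structure. Dualising, $\mu^{*}$ then becomes an algebra morphism from $(\cU^{*},*)$ to $(\cU\tensor{A}\cU)^{*}$ endowed with the convolution product induced by the tensor coalgebra structure on $\cU\tensor{A}\cU$. The principal obstacle is matching this algebra structure on $(\cU\tensor{A}\cU)^{*}$ with the completed componentwise convolution on $\cU^{*}\cmptens{A}\cU^{*}$ (described in Remark \ref{rem:cmptensalgebra}) through $\what{\psiup_{\cU,\cU}}$. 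Once this matching is checked at each finite stage $\phi_{m,n}:(F^{m}\cU)^{*}\tensor{A}(F^{n}\cU)^{*}\xrightarrow{\cong}(F^{n}\cU\tensor{A}F^{m}\cU)^{*}$ of the projective system, the universal property of the limit allows one to use \eqref{Eq:fuv} to evaluate both $\Delta_{*}(f*g)$ and $\Delta_{*}(f)\cdot\Delta_{*}(g)$ at an arbitrary $u\tensor{A}v$ and obtain the common value $f(u_{1}v_{1})g(u_{2}v_{2})$.

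Finally, the assertion about $\varepsilon_{*}$ is direct: continuity is immediate because for $f\in F_{1}\cU^{*}=\ann{F^{0}\cU}=\ann{A}$ one has $\varepsilon_{*}(f)=f(1_{\cU})=0$, so $\varepsilon_{*}(F_{1}\cU^{*})=0=F_{1}A$ since $A$ is trivially filtered. For the algebra morphism property, using $\Delta(1_{\cU})=1_{\cU}\tensor{A}1_{\cU}$ one computes
\[
\varepsilon_{*}(f*g)=(f*g)(1_{\cU})=f\bigl((1_{\cU})_{1}\bigr)g\bigl((1_{\cU})_{2}\bigr)=f(1_{\cU})g(1_{\cU})=\varepsilon_{*}(f)\varepsilon_{*}(g),
\]
and the convolution unit of $\cU^{*}$ being $\varepsilon_{\cU}$ one has $\varepsilon_{*}(1_{\cU^{*}})=\varepsilon_{\cU}(1_{\cU})=1_{A}$.
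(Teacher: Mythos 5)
Your treatment of the filteredness of $\Delta_*$ and of the counit $\varepsilon_*$ is correct and matches the paper: the first claim is exactly the observation that $\mu^*$ sends $\ann{F^{n-1}\cU}$ into $\ann{F^{n-1}(\cU\tensor{A}\cU)}$ because $\mu$ is filtered, combined with the filtered isomorphism of Proposition \ref{prop:MRN}; the computation for $\varepsilon_*$ is fine (and more explicit than the paper's).

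For the multiplicativity of $\Delta_*$, however, there is a genuine gap, and it sits precisely at the point you yourself flag as ``the principal obstacle.'' Saying that $\mu^*$ is an algebra map into the convolution algebra of the coalgebra $\cU\tensor{A}\cU$ and that ``once this matching is checked'' the claim follows defers the entire content of the proof: the statement that $\what{\phi_{\Sscript{\cU,\cU}}}$ intertwines the completed componentwise convolution $\boldsymbol{\mu}$ on $\cU^*\cmptens{A}\cU^*$ (Remark \ref{rem:cmptensalgebra}) with the convolution of $(\cU\tensor{A}\cU)^*$ is exactly what the paper proves, and it is not formal. The paper's argument verifies $\Pi_{p,q}\bigl(\boldsymbol{\mu}(\Delta_*(f)\cmptens{}\Delta_*(g))\bigr)=\Pi_{p,q}\bigl(\Delta_*(f*g)\bigr)$ for all $p,q$ by an explicit computation with the distinguished dual basis of $F^{p+q+1}\cU$ from Remark \ref{rem:dualbasis}, and that computation uses three nontrivial inputs you do not invoke: the left $A$-linearity of $\Delta_{\cU}$, the Takeuchi condition $\Img{\Delta_{\cU}}\subseteq\cU\times_{\Sscript{A}}\cU$ (needed for the step where $g$ is pulled past the dual-basis sum), and the compatibility of $\Delta_{\cU}$ with the admissible filtration (which guarantees that the dual basis of $F^{p+q+1}\cU$ reproduces \emph{both} tensor legs of $\Delta_{\cU}(x)$ for $x\in F^{q}\cU$). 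Note also that even your preliminary step --- endowing $\cU_{\Sscript{A}}\tensor{A}{}_{\Sscript{A}}\cU$ with a tensor coalgebra structure for which $\mu$ is a coalgebra morphism --- requires the Takeuchi condition to be well posed over the balanced tensor product, and the identification of the convolution algebra of this (infinite-rank) coalgebra with $\cU^*\cmptens{A}\cU^*$ is available only because the filtration is admissible, so that the finite-stage isomorphisms $\phi_{m,n}$ of Corollary \ref{coro:FnL} exist. Your architecture is sound, but as written the multiplicativity claim rests on an unproven assertion that carries all the weight.
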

\begin{proof}
By definition of $\Delta_*$, the first claim follows from the filtered isomorphism $(\cU\tensor{A}\cU)^*\cong \cU^*\cmptens{A}\cU^*$, once observed that the transpose of a filtered morphism of increasingly filtered modules is filtered with respect to the induced decreasing filtrations on the duals.
For the second claim, we only give the proof for the comultiplication, since the counit is clearly a morphism of complete algebras.
To show that $\Delta_*$ is unital, recall first that the unit of $\cU^*$ is the counit $\varepsilon=1_{\Sscript{\cU^*}}$ of $\cU$ and the unit of $\cU^*\tensor{A}\cU^*$ is $\varepsilon \tensor{A}\varepsilon=1_{\Sscript{\cU^*\tensor{A}\cU^*}}$, so $1_{\Sscript{\what{\cU^*\tensor{A}\cU^*}}}=\what{\varepsilon\tensor{A}\varepsilon}$ (the notation is that of Remark \ref{rem:limits}). Since
$$\what{\phi_{\Sscript{\cU,\cU}}}\left(\what{\varepsilon\tensor{A}\varepsilon}\right)(u\tensor{A}v) = \what{\phi_{\Sscript{\cU,\cU}}} \left(\gamma_{\Sscript{\cU^*\tensor{A}\cU^*}}(\varepsilon\tensor{A}\varepsilon)\right)(u\tensor{A}v) = \phi_{\Sscript{\cU,\cU}}(\varepsilon\tensor{A}\varepsilon)(u\tensor{A}v) = \varepsilon(\varepsilon(u)v)=\varepsilon(uv)$$
it follows from \eqref{Eq:fuv} that $\Delta_*(\varepsilon)=\what{\varepsilon\tensor{A}\varepsilon}$.

In view of Remark \ref{rem:counit}, to prove that $\Delta_{*}$ is multiplicative it is enough to show that $\Delta_*(f*g)=\boldsymbol{\mu}\left(\Delta_*(f)\cmptens{}\Delta_*(g)\right)$, for every $f,g \in \cU^*$, where $\boldsymbol{\mu}$ is the multiplication of the complete algebra $\cU^* \cmptens{A}\cU^*$ as in Remark \ref{rem:cmptensalgebra}. By employing the notation of \eqref{eq:deltastar}, we know that
\begin{gather}
\Delta_*(f*g)={\limn}\left(  \sum_{k=1}^{d_n} ((f*g)\leftharpoonup \tau_n\big(e_k^n\big)) \tensor{A} E_{\scriptstyle{\lambda_k^n}} \right), \label{Eq:fstarg}\\
\boldsymbol{\mu}\left(\Delta_*(f) \cmptens{} \Delta_*(g)\right) = {\limn}\left(  \sum_{i,\, j, \, =1}^{d_n, \,d_n} (f\leftharpoonup \tau_n\big(e_i^n\big))*(g\leftharpoonup \tau_n\big(e_j^n\big)) \tensor{A} (E_{\scriptstyle{\lambda_i^n}} *E_{\scriptstyle{\lambda_j^n}} ) \right).
\end{gather}
To show that the last two equations are equal, we need to show that the involved Cauchy sequences converge to the same limit.  In view of \eqref{diag:Delta*}, this amounts to show that $\Pi_{p,q}\left(\Delta_*(f*g)\right) = \Pi_{p,q}\left(\boldsymbol{\mu}\left(\Delta_*(f) \cmptens{} \Delta_*(g)\right) \right)$ for all $p,q\in\N$. Let  $x\in F^q\cU$, $y\in F^p\cU$ for some $p+q=n$
 and set $k=n+1=p+q+1$. We compute
\begin{align*}
& \phi_{p,q} \left( \Pi_{p,q} \Big( \boldsymbol{\mu}\left(\Delta_*(f)\cmptens{A}\Delta_*(g)\right)\Big)\right)(x\tensor{A}y) \overset{ \eqref{eq:phiPi}}{=} \sum_{i,\, j}^{d_{k},\, d_{k}}\left(\left(f\leftharpoonup \tau_{k}\big(e_j^{k}\big)\right)*\left(g\leftharpoonup \tau_{k}\big(e_j^{k}\big)\right)\right)\left( \left(E_{\Sscript{\lambda_i^k}}*E_{\Sscript{\lambda_j^k}}\right)(\tau_q(x)) \tau_p(y)\right) \\
 & \stackrel{(**)}{=}\sum_{ (x)}\sum_{i,\, j}^{d_{k},\, d_{k}}  \left(\left(f\leftharpoonup e_i^k\right)*\left(g\leftharpoonup e_j^k\right)\right)\left(E_{\Sscript{\lambda_i^k}}(x_1)E_{\Sscript{\lambda_j^k}}(x_2) y\right)\stackrel{(*)}{=} \sum_{(x),\, (y)}\sum_{i,\, j}^{d_k,\, d_k}  f\left(\big(e_i^k E_{\Sscript{\lambda_i^k}}(x_1)\big)E_{\Sscript{\lambda_j^k}}(x_2) y_1\right)g(e_j^ky_2) \\
&\stackrel{(\triangle)}{=}  \sum_{ (x),\, (y)}\sum_{j}^{d_k}  f\left(x_1E_{\Sscript{\lambda_j^k}}(x_2) y_1\right)g(e_j^ky_2)  \stackrel{(\blacktriangle)}{=} \sum_{(x),\, (y)} f(x_1y_1) g \left(  \sum_{j}^{d_k}   (e_j^kE_{\Sscript{\lambda_j^k}}(x_2))y_2\right)
 \\
 & \stackrel{(\triangle)}{=} \sum_{(x),\, (y)} f(x_1y_1)g(x_2y_2) \,=\, \sum_{(xy)} f((xy)_1)g((xy)_2) =   (f*g)(xy) \stackrel{\eqref{diag:Delta*}}{=} \phi_{p,q} \left( \Pi_{p,q}\left( \Delta_{*}(f*g) \right) \right)(x\tensor{A}y)
\end{align*}
where in $(*)$ we used the left $A$-linearity of $\Delta_{\cU}$ and in $(\blacktriangle)$ the fact that $\Img{\Delta_{\cU}}\subseteq \cU\times_{\Sscript{A}}\cU$. The equalities $(\triangle)$ follow from the fact that $\Delta_{\cU}$ is compatible with the filtration and from the observation after equation \eqref{eq:deltastar}. From $(**)$ up to the end of the computation, we omitted the inclusions $\tau_h$'s.  In conclusion, we have
$$
\Pi_{p,q} \Big( \boldsymbol{\mu}\left(\Delta_*(f)\cmptens{A}\Delta_*(g)\right)\Big)\,=\, \Pi_{p,q} \left(\Delta_{*}(f*g)\right), \quad \text{ for every } p,q \geq 0,
$$
whence $\Delta_*$ is multiplicative as well.
\end{proof}

\begin{proposition}\label{prop:coalgebra}
Let $\left(A,\cU\right)$ be a co-commutative Hopf algebroid with $\cU$ endowed with an admissible filtration $\left\{F^n\cU\mid n\geq 0\right\}$. Then $\left(\cU^*,\Delta_*,\varepsilon_*\right)$ is a coalgebra in the monoidal category $\left(\CBim{A}{A},\what{\otimes}_A,A,\alpha\right)$ of complete $A$-bimodules, where $\cU^*\cmptens{A}\cU^*$ is filtered with its canonical filtration.
\end{proposition}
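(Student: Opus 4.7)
The plan is to first reduce the whole statement to the verification of coassociativity and counitality, and then to establish these two identities by dualizing the associativity and unit axioms of the multiplication $\mu$ of $\cU$, transported along the canonical identifications of Proposition \ref{prop:MRN} and Corollary \ref{coro:FnL}.

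First, observe that Lemma \ref{lemma:deltaepsialg} already shows that both $\Delta_*$ and $\varepsilon_*$ are filtered $A$-bilinear maps between complete $A$-bimodules, hence they are morphisms in $\CBim{A}{A}$. Thus it only remains to verify that $(\cU^*,\Delta_*,\varepsilon_*)$ satisfies the coassociativity and the counit axioms with respect to the symmetric monoidal structure of $\CBim{A}{A}$ provided by $\cmptens{A}$ and $\alpha$.

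For coassociativity, the plan is to extend the defining diagram \eqref{diag:Delta*} to a triple-tensor version. Namely, for every $n,m,l\geq 0$ one has a canonical filtered isomorphism
\[
\phi_{l,m,n}\colon \bigl(F^l\cU^*\bigr)\tensor{A}\bigl(F^m\cU^*\bigr)\tensor{A}\bigl(F^n\cU^*\bigr) \;\longrightarrow\; \bigl(F^n\cU\tensor{A}F^m\cU\tensor{A}F^l\cU\bigr)^{\!*}
\]
obtained by iterating Corollary \ref{coro:FnL}, and an associated continuous isomorphism $\psi_{\cU,\cU,\cU}\colon (\cU\tensor{A}\cU\tensor{A}\cU)^*\cong \cU^*\cmptens{A}\cU^*\cmptens{A}\cU^*$. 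The associativity axiom for $\mu$, namely $\mu\circ(\mu\tensor{A}\id_\cU)=\mu\circ(\id_\cU\tensor{A}\mu)$, yields the equality $(\mu\tensor{A}\id)^{*}\mu^* = (\id\tensor{A}\mu)^*\mu^*$ of maps $\cU^*\to(\cU\tensor{A}\cU\tensor{A}\cU)^*$; composing with $\psi_{\cU,\cU,\cU}$ and comparing with the analogue of diagram \eqref{diag:Delta*} one obtains $(\Delta_*\cmptens{A}\id_{\cU^*})\circ\Delta_* = \alpha\circ(\id_{\cU^*}\cmptens{A}\Delta_*)\circ\Delta_*$. Equivalently, applying $\Pi_{l,m,n}$ and the canonical isomorphism $\phi_{l,m,n}$ on both sides reduces the identity to the equality $f(x(yz))=f((xy)z)$ for $x\in F^n\cU$, $y\in F^m\cU$, $z\in F^l\cU$, which is just the associativity of $\mu$.

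For the counit axiom I would argue by means of the explicit formula \eqref{eq:deltastar}. Applying $\varepsilon_*\cmptens{A}\id$ termwise, the $n$-th term of the Cauchy sequence defining $\Delta_*(f)$ becomes
\[
\sum_{i=1}^{d_n}\bigl(f\leftharpoonup\tau_n(e_i^n)\bigr)(1_\cU)\,E_{\lambda_i^n} \;=\; \sum_{i=1}^{d_n} f\bigl(\tau_n(e_i^n)\bigr)\,E_{\lambda_i^n},
\]
and evaluating on any $u\in F^n\cU$ gives $\sum_i f(e_i^n)\lambda_i^n(u)=f(u)$ by the dual basis property of Remark \ref{rem:dualbasis}. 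Hence this element coincides with $f$ on $F^n\cU$, i.e.\ it lies in $f+F_{n+1}\cU^*$, so the sequence converges to $f$ in the filtered topology of $\cU^*$ and $(\varepsilon_*\cmptens{A}\id)\Delta_*=\id_{\cU^*}$; the symmetric identity $(\id\cmptens{A}\varepsilon_*)\Delta_*=\id_{\cU^*}$ is proved in the same way using that $\theta_n(1_\cU)=1_\cU$ and the dual basis expansion of $1_\cU\in F^0\cU\subseteq F^n\cU$.

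The only delicate point is the bookkeeping involved in passing from the uncompleted $A$-linear duality $(F^n\cU\tensor{A}F^m\cU)^*\cong F^m\cU^*\tensor{A}F^n\cU^*$ to the topological duality $\cU^*\cmptens{A}\cU^*\cong(\cU\tensor{A}\cU)^*$, and in checking that the associator $\alpha$ of $\CBim{A}{A}$ matches the reassociation of the triple $(\cU\tensor{A}\cU\tensor{A}\cU)^*$. This coherence is guaranteed by the monoidality of $\what{(-)}$ (Corollary \ref{prop:moncatcomplbimod}) together with the naturality of $\phi_{m,n}$ in Corollary \ref{coro:FnL}, so no genuine obstruction arises.
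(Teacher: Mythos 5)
Your proposal is correct and follows essentially the same route as the paper's proof: the counit axiom is verified exactly as in the paper via the explicit formula \eqref{eq:deltastar} and the dual basis of Remark \ref{rem:dualbasis}, and coassociativity is reduced, through the completed duality isomorphisms of Proposition \ref{prop:MRN} and Corollary \ref{coro:FnL}, to the associativity identity $f(u(vw))=f((uv)w)$. The "bookkeeping" you flag at the end is precisely what the paper handles by evaluating both sides of \eqref{eq:coass} on elements $u\tensor{A}v\tensor{A}w$ using the criterion \eqref{Eq:fuv}.
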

\begin{proof}
We already know from Lemma \ref{lemma:U*complete} that $\cU^*\cong \prlimit{n}{\cU^*/\mathsf{Ann}\left(F^n\cU\right)}$  is a complete $A$-bimodule. The map $\Delta_*$ is $A$-bilinear by construction and we know from Lemma \ref{lemma:deltaepsialg}  that it is also filtered. The $A$-bilinear map $\varfun{\varepsilon_*}{\cU^*}{A}$ is filtered as well, in view of Lemma \ref{lemma:deltaepsialg}.

Let us prove then that $\Delta_*$ is coassociative and counital, with counit $\varepsilon_*$. Let us begin with counitality. Since $\varepsilon_*$ is filtered, $\varepsilon_*\otimes_A\cU^*$ is filtered and hence we have $\varfun{\varepsilon_*\cmptens{A}\cU^*}{\cU^*\cmptens{A}\cU^*}{\cU^*}$ which acts as
$$
\left(\varepsilon_*\cmptens{A}\cU^*\right) \left( \limn\left( \sum_{i=1}^{r_n}f_i^{(n)}\tensor{A} g_i^{(n)} \right) \right) \,=\, \limn\left(\sum_{i=1}^{r_n} f_i^{(n)}\left(1_{\Sscript{\cU}}\right) \cdot g_i^{(n)} \right).
$$
Applying this formula to $\Delta_{*}(f)$ for any $f \in \cU^*$, we get
\begin{equation*}
\left(\varepsilon_*\cmptens{A}\cU^*\right) \left(\Delta_{*}(f)\right) = \limn\left(\sum_{i=1}^{d_n} f(\tau_n\big(e_i^n\big))\cdot E_{\Sscript{\lambda_i^{n}}} \right) = \limn\left(\sum_{i=1}^{d_n} f(\tau_n\big(e_i^n\big))\cdot \lambda_i^{n}\theta_n \right) = \limn\left(f\tau_n\theta_n\right) = f,
\end{equation*}
since $\{e_i^n,\lambda_i^n\mid i=1\cdots d_n\}$ is the dual basis of $F^n\cU$ given in Remark \ref{rem:dualbasis} and  $f\tau_n\theta_n-f\in \mathsf{Ann}(F^{n-1}\cU)=F_n\cU^*$. This shows that $\left(\varepsilon_*\cmptens{A}\cU^*\right)\circ \Delta_*=\id_{\cU^*}$. Analogously, we obtain $\left(\cU^*\cmptens{A}\varepsilon_*\right)\circ \Delta_*=\id_{\cU^*}$.

Finally, we have to check the coassociativity of the comultiplication.
To this end we will use the characterization of $\Delta_{*}$ given in \eqref{Eq:fuv}.  For a given $f \in \cU^*$, we have
\begin{equation*}
\begin{split}
(\Delta_{*}\cmptens{A}\cU^*) \left(\Delta_{*}(f)\right)\,=\,  \limn\sum_{(f)}\left( \left(\limk \sum_{\left(f_{(1)}\right)} (f_{\Sscript{(11),\,n,\,k}} \tensor{A}f_{\Sscript{(12),\,n,\,k}}) \right) \tensor{A} f_{\Sscript{(2),n}} \right), \\
(\cU^*\cmptens{A}\Delta_{*}) \left( \Delta_{*}(f)\right)\,=\,  \limn\sum_{(f)}\left(  f_{\Sscript{(1),\,n}} \tensor{A}  \left(\limk\sum_{\left(f_{(2)}\right)}(f_{\Sscript{(21),\,n,\,k}} \tensor{A}f_{\Sscript{(22),\,n,\,k}}) \right)  \right) .
\end{split}
\end{equation*}
For simplicity we will drop the sum $\sum_{(f)}$ accompanying the algebraic tensor product as all the involved topologies are linear. In light of \eqref{eq:defalpha}, the coassociativity of $\Delta_{*}$ will follow once it will be shown that
\begin{equation}\label{eq:coass}
\limn\left( f_{\Sscript{(11),\,n,\,n}} \tensor{A}f_{\Sscript{(12),\,n,\,n}} \tensor{A} f_{\Sscript{(2),\,n}}  \right) = \limn\left( f_{\Sscript{(1),n}} \tensor{A}  f_{\Sscript{(21),\,n,\,n}} \tensor{A}f_{\Sscript{(22),\,n,\,n}} \right)
\end{equation}
as a limit point in the complete space $\what{\cU^*\tensor{A}\cU^*\tensor{A}\cU^*}$ (the completion of  $\cU^*\tensor{A}\cU^*\tensor{A}\cU^*$) .   By omitting both the associativity constraint $a$ and its transpose $a^*$ observe that, for all given $u,v, w \in \cU$, we have
\begin{equation*}
\begin{split}
\what{\phi_{\Sscript{\cU\tensor{A}\cU, \cU}}}  \left( \what{\phi_{\Sscript{\cU,\cU}} \tensor{A} \cU^*} \right) \left( \limn\left( f_{\Sscript{(11),n,n}} \tensor{A}f_{\Sscript{(12),n,n}} \tensor{A} f_{\Sscript{(2),n}}  \right)  \right)  (u\tensor{A}v\tensor{A}w)  =   \limn f_{\Sscript{(11),n,n}}\Big(f_{\Sscript{(12),n,n}}(f_{\Sscript{(2),n}}(u)v)  w\Big) \overset{\eqref{Eq:fuv}}{=} f(u(vw)), \\
\what{\phi_{\Sscript{\cU, \cU\tensor{A}\cU}}} \left( \what{\cU^* \tensor{A} \phi_{\Sscript{\cU,\cU}}} \right) \left( \limn\left( f_{\Sscript{(1),n}} \tensor{A}  f_{\Sscript{(21),n,n}} \tensor{A}f_{\Sscript{(22),n,n}} \right) \right) (u\tensor{A}v\tensor{A}w) = \limn f_{\Sscript{(1),n}}\Big( f_{\Sscript{(21),n,n}}(f_{\Sscript{(22),n,n}}(u)v)w \Big)  \overset{\eqref{Eq:fuv}}{=} f((uv)w) .
\end{split}
\end{equation*}
Comparing this last equations leads to equality \eqref{eq:coass} and then to the coassociativity of $\Delta_{*}$.
\end{proof}

\subsection{The topological antipode of $\cU^*$}\label{ssec:AUstra}
Now we proceed to construct the topological antipode for $\cU^*$, under the further hypothesis that the translation map of $\cU$ is a  filtered morphism of algebras. Such an assumption is always  fulfilled in the case of the universal enveloping Hopf algebroid of a Lie-Rinehart algebra with finitely generated and projective module $L_{\Sscript{A}}$, as we will see in Example \ref{exam:SUL}.

At the level of the algebra structure, the antipode is provided by the following  map (compare with \cite[Theorem 3.1]{KowalzigExtBVA} and \cite[Theorem 5.1.1]{CGK:2016} for the case when $\cU$ is finitely generated and projective right $A$-module):
\begin{equation}\label{Eq:tantip}
\cS_{*}: \cU^* \longrightarrow \cU^*,\quad \Big(  f \longmapsto \left[  u \mapsto \varepsilon\big( f(u_{\Sscript{-}}) u_{\Sscript{+}}\big) \right] \Big),
\end{equation}
where  $u \mapsto \beta^{-1}(1\tensor{A}u) = u_{\Sscript{-}} \tensor{A} u_{\Sscript{+}} $ is the translation map obtained from the inverse of the map $\beta: \cU_{\Sscript{A}} \tensor{A} {\,}_{\Sscript{A}}\cU \to \cU_{\Sscript{A}}\tensor{A} \cU_{\Sscript{A}}$, which sends  $u\tensor{A}v \mapsto uv_{\Sscript{1}}\tensor{A}v_{\Sscript{2}}$. 
Consider $\delta: \cU \to \cU \tensor{A} \cU:= \cU_{\Sscript{A}} \tensor{A} {}_{\Sscript{A}}\cU$, $ u \mapsto  u_{\Sscript{-}} \tensor{A} u_{\Sscript{+}}$ the map of Equation \eqref{eq:amore}. As it was shown in \cite[Proposition 3.7]{Schau:DADOQGHA}, the map $\delta$ enjoys a series of properties. Here we recall few of them  which will be needed in the sequel.  First notice that $\beta^{-1} (v\tensor{A} u)=v\umin\tensor{A}\umas$, so for all $u,v\in\cU$ and $a\in A$ we have
\begin{eqnarray}
1\tensor{A} u & =& \umin u_{\Sscript{+,\,1}} \tensor{A} u_{\Sscript{+,\,2}}  \,\, \in \, \cU_{\Sscript{A}} \, \tensor{A} \, \cU_{\Sscript{A}} \label{Eq:B4}   \\ u_{\Sscript{1,\, -}} \tensor{A} u_{\Sscript{1,\,+}}\tensor{A} u_{\Sscript{2}} &=& \umin\tensor{A} u_{\Sscript{+,\, 1}}\tensor{A}u_{\Sscript{+,\, 2}} \,\, \in \, (\cU_{\Sscript{A}} \,\tensor{A} \, {}_{\Sscript{A}}\cU)\, \tensor{A} \, \cU_{\Sscript{A}} \label{Eq:B5}   \\
 u_{\Sscript{+,\,-}} \tensor{A} u_{\Sscript{-}} \tensor{A} u_{\Sscript{+,\,+}}  &=& u_{\Sscript{-,\,1}} \tensor{A} u_{\Sscript{-,\,2}} \tensor{A} u_{\Sscript{+}} \,\, \in \, \cU_{\Sscript{A}} \,\tensor{A} \, \cU_{\Sscript{A}}\, \tensor{A} \, \cU_{\Sscript{A}}   \label{Eq:B55}  \\
\umin \, \umas &=& \tau_{\Sscript{0}}(\varepsilon(u) ) \,\,  \in F^0\cU = A  \label{Eq:B555}
\\
(uv)_{\Sscript{-}} \tensor{A} (uv)_{\Sscript{+}} & =& v_{\Sscript{-}} u_{\Sscript{-}} \tensor{A} u_{\Sscript{+}}v_{\Sscript{+}}   \,\, \in \cU_{\Sscript{A}} \, \tensor{A} \, {}_{\Sscript{A}}\cU \label{Eq:B66}
\\
a\tensor{A} 1\,\,=\,\, 1\tensor{A} a &=& a_{\Sscript{-}} \tensor{A} a_{\Sscript{+}} \,\, \in \,  \cU_{\Sscript{A}} \,\tensor{A} \, {}_{\Sscript{A}}\cU. \label{Eq:B6}
\end{eqnarray}
In particular, by equation \eqref{Eq:B66} we have that $\delta$ is an algebra map, viewed as a map from $\cU$ to $\cU^{\Sscript{\mathrm{op}}} \times_{\Sscript{A}} \cU$.
The subsequent lemma is crucial for showing that $\cS_{*}$ is multiplicative.

\begin{lemma}\label{lema:Sstar}
Let $f,g, h \in \cU^*$ and $u \in \cU$. Then we have
\begin{equation}
\begin{split}
\cS_{*}(f*g) (u) \,=\, \cS_{*}(f)\Big( g(u_{\Sscript{-}})u_{\Sscript{+}} \Big) \hspace{2cm} \label{Eq:S1} \\
\big(\cS_{*}(f) * h\big) (u) \,=\,  \big(h\leftharpoonup f(u_{\Sscript{-}})\big) (u_{\Sscript{+}}) \,=\, \Big(\big( \varepsilon\leftharpoonup f(u_{\Sscript{-}}) \big) * h\Big) (u_{\Sscript{+}}) 
\end{split}
\end{equation}
\end{lemma}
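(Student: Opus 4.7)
The plan is to establish both identities in \eqref{Eq:S1} by direct computation, unfolding the definitions of $\cS_*$ in \eqref{Eq:tantip}, of the convolution product \eqref{Eq:convolution}, and of the left hit action $\leftharpoonup$, and systematically exploiting the structural identities \eqref{Eq:B4}--\eqref{Eq:B6} satisfied by the translation map $\delta$.

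For the first identity I would start from the LHS:
\[
\cS_*(f*g)(u) \,=\, \varepsilon\bigl((f*g)(u_-)\, u_+\bigr) \,=\, \varepsilon\bigl(f(u_{-,1})\, g(u_{-,2})\, u_+\bigr),
\]
which is legitimate since right $A$-linearity of $f$ and $g$ ensures that $f\tensor{A} g\tensor{A} \id$ factors through the balanced tensor $u_{-,1}\tensor{A} u_{-,2}\tensor{A} u_+$. The decisive step is to substitute here the identity \eqref{Eq:B55}, $u_{-,1}\tensor{A} u_{-,2}\tensor{A} u_+ = u_{+,-}\tensor{A} u_-\tensor{A} u_{+,+}$, which rewrites the above as $\varepsilon\bigl(f(u_{+,-})\, g(u_-)\, u_{+,+}\bigr)$. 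On the RHS, the definition \eqref{Eq:tantip} gives $\cS_*(f)(g(u_-)u_+) = \varepsilon\bigl(f((g(u_-)u_+)_-)\cdot (g(u_-)u_+)_+\bigr)$, so it remains to compute $\delta(g(u_-)u_+)$. Combining the anti-multiplicativity \eqref{Eq:B66} of $\delta$ with its prescription \eqref{Eq:B6} on scalars, $a_-\tensor{A} a_+ = 1\tensor{A} a = a\tensor{A} 1$ for every $a\in A$, I obtain $(g(u_-)u_+)_-\tensor{A} (g(u_-)u_+)_+ = (u_+)_-\tensor{A} g(u_-)(u_+)_+$, from which the matching with the previous expression follows.

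For the second identity I would compute
\[
(\cS_*(f)*h)(u) \,=\, \cS_*(f)(u_1)\, h(u_2) \,=\, \varepsilon\bigl(f(u_{1,-})\, u_{1,+}\bigr)\, h(u_2),
\]
and then apply \eqref{Eq:B5}, $u_{1,-}\tensor{A} u_{1,+}\tensor{A} u_2 = u_-\tensor{A} u_{+,1}\tensor{A} u_{+,2}$, to rewrite this as $\varepsilon\bigl(f(u_-)\, u_{+,1}\bigr)\, h(u_{+,2})$; this expression is, by inspection, exactly $\bigl((\varepsilon\leftharpoonup f(u_-))*h\bigr)(u_+)$, which yields one of the claimed equalities in \eqref{Eq:S1}. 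For the remaining identity $\bigl(h\leftharpoonup f(u_-)\bigr)(u_+) = h(f(u_-)u_+) = \bigl((\varepsilon\leftharpoonup f(u_-))*h\bigr)(u_+)$ I would apply the unit property of convolution, $\varepsilon*\phi = \phi$ in $\cU^*$, to the right $A$-linear map $\phi := h\leftharpoonup f(u_-)$, obtaining $h(f(u_-)u_+) = \varepsilon(u_{+,1})\, h(f(u_-)u_{+,2})$; the Takeuchi condition $f(u_-)u_{+,1}\tensor{A} u_{+,2} = u_{+,1}\tensor{A} f(u_-)u_{+,2}$, available because $\Delta(u_+)\in \cU\times_A \cU$, then rebalances the scalar $f(u_-)$ and closes the chain.

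The principal technical obstacle is the careful bookkeeping of the various $A$-bimodule structures and the sidedness when moving $A$-valued factors across the tensor symbols and through $\varepsilon$; this is where the right $A$-linearity of $f$, $g$, $h$, $\varepsilon$, the commutativity of $A$, the Takeuchi relation defining $\cU\times_A\cU$, and the explicit form \eqref{Eq:B6} of the translation map on elements of $A$ all intervene. Once these bookkeeping issues are settled, the rest of the argument is essentially the sequence of rewriting steps outlined above.
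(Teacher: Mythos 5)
Your proof is correct and follows essentially the same route as the paper's: both identities are obtained by direct computation from \eqref{Eq:tantip} and \eqref{Eq:convolution}, with \eqref{Eq:B55} doing the work in the first identity and \eqref{Eq:B5} in the second, followed by the same rebalancing of the scalar $f(u_{\Sscript{-}})$ via the Takeuchi condition and the counit. The only (immaterial) difference is that you justify the last step of the first identity by explicitly computing $\delta\big(g(u_{\Sscript{-}})u_{\Sscript{+}}\big)$ from \eqref{Eq:B66} and \eqref{Eq:B6}, where the paper simply invokes the $A$-linearity of $\delta$.
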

\begin{proof}
We will implicitly use the co-commutativity of the comultiplication of $\cU$ as well as the $A$-linearity of $\delta$. Computing the left hand side of the first equality gives
\begin{equation*}
\cS_{*}(f*g) (u) = \varepsilon\Big( (f*g)(u_{\Sscript{-}})\, u_{\Sscript{+}} \Big) =  \varepsilon\Big( f(u_{\Sscript{-,1}})\,g(u_{\Sscript{-,2}})\, u_{\Sscript{+}} \Big) \overset{\eqref{Eq:B55}}{=}  \varepsilon\Big( f(u_{\Sscript{+,-}})\,(g(u_{\Sscript{-}})\, u_{\Sscript{+,+}}) \Big) = \cS_{*}(f)\Big( g(u_{\Sscript{-}})u_{\Sscript{+}} \Big),
\end{equation*}
where in the last equality we used \eqref{Eq:tantip} and \eqref{Eq:B6}. This leads to the stated first equality.

As for the second one, we have
\begin{equation*}
(\cS_{*}(f) *h)(u) = \cS_{*}(f)(u_{\Sscript{1}})  h(u_{\Sscript{2}}) =   \varepsilon\Big( f(u_{\Sscript{1,-}})\,u_{\Sscript{1,+}}\Big)\, h(u_{\Sscript{2}})  \overset{\eqref{Eq:B5}}{=}
 \varepsilon\Big( f(u_{\Sscript{-}})\,u_{\Sscript{+,1}}\Big)\, h(u_{\Sscript{+,2}})
\end{equation*}
from which one deduces on the one hand that $(\cS_{*}(f) *h)(u) =  \varepsilon\Big( f(u_{\Sscript{-}})\,u_{\Sscript{+,1}}\Big)\, h(u_{\Sscript{+,2}}) = \Big( (\varepsilon \leftharpoonup f(u_{\Sscript{-}})) * h\Big) (u_{\Sscript{+}})$ and on the other hand that $(\cS_{*}(f) *h)(u) =  \varepsilon\Big( f(u_{\Sscript{-}})\,u_{\Sscript{+,1}}\Big)\, h(u_{\Sscript{+,2}}) = \varepsilon( u_{\Sscript{+,1}})\, h(f(u_{\Sscript{-}})\,u_{\Sscript{+,2}})=h(f(u_{\Sscript{-}})\,u_{\Sscript{+}})$.
\end{proof}

The map $\beta$ is compatible with the increasing filtration on both  $\cU_{\Sscript{A}} \tensor{A} {\,}_{\Sscript{A}}\cU$ and $\cU_{\Sscript{A}}\tensor{A} \cU_{\Sscript{A}}$.
Namely,
$$
\cF^{\Sscript{n}}\big( \tuau \big)\,=\, \sum_{p+q=n} \tfuafu{p}{q} \subseteq \tuau \quad \text{and} \quad \cF^{\Sscript{n}}\left(\cU_{\Sscript{A}} \tensor{A} {\,}_{\Sscript{A}}\cU\right) = \sum_{p+q=n}\Img{F^{\Sscript{p}}\cU_{\Sscript{A}}\tensor{A}{_{\Sscript{A}}F^{\Sscript{q}}\cU}}.
$$
The left-most inclusion is clear since the $\taun : F^n\cU_{\Sscript{A}} \to \cU_{\Sscript{A}}$'s are split monomorphisms of right $A$-modules. The structure of $A$-bimodule on $\tuau$ is given by $a(u\tensor{A}v)a'=au\tensor{A}a'v$. Applying $\beta$ to each term of the canonical filtration of $\ltuau$, we have
\begin{equation*}
\begin{split}
\beta\Big( \cF^n\big( \ltuau\big) \Big) & = \sum_{p+q=n}F^p\cU\cdot \Delta(F^q\cU) \subseteq \sum_{p+l+k=n}F^p\cU \cdot F^k\cU_{\Sscript{A}} \tensor{A} F^l\cU_{\Sscript{A}} \subseteq  \sum_{p+l+k=n}\tfuafu{p+k}{l} \\ & \subseteq \sum_{i+j=n} \tfuafu{i}{j} = \cF^n(\tuau).
\end{split}
\end{equation*}
This means that $\beta$ is a filtered morphism of $A$-bimodules. On the other hand, $\beta^{-1}$ is a filtered bilinear map if and only if $\delta$ is a filtered algebra map. We point out that, in general, none of the equivalent conditions may be true, although this is the case for universal enveloping Hopf algebroids, as the next example shows.

\begin{example}\label{exam:SUL}
Take $(A,L)$ and $\cU=\cV_{\Sscript{A}}(L)$ as in Example \ref{exm:ULAdmiss}. Then, the following computation
\begin{equation*}
\delta \left( F^{n}\mathcal{U} \right) \,\subseteq\,
\sum_{k=0}^{n}\delta \left( \iota _{L}\left( L\right) ^{k}\right) \,\subseteq\,
\sum_{k=0}^{n}\delta \left( \iota _{L}\left( L\right) \right) ^{k}  \,\subseteq\,
\sum_{k=0}^{n}F^{k}\left( \mathcal{U} \tensor{A}\mathcal{U}
 \right) \subseteq F^{n}\left( \mathcal{U}
\tensor{A}\mathcal{U} \right)
\end{equation*}
shows that $\delta$ is a filtered algebra map, which
implies that $\beta ^{-1}$ is also filtered, as we have
\begin{eqnarray*}
\beta ^{-1}\left( F^{n}\left( \mathcal{U} \tensor{A}
\mathcal{U} \right) \right)  & \subseteq &\sum_{p+q=n}\beta
^{-1}\left( F^{p}\mathcal{U} \tensor{A}F^{q}\mathcal{U}
 \right) \subseteq \sum_{p+q=n}F^{p}\mathcal{U} \centerdot \delta\left( F^{q}\mathcal{U}\right)  \\
&\subseteq &\sum_{p+k+h=n}F^{p}\mathcal{U} \cdot  \Img{F^{k}
\mathcal{U} \tensor{A}F^{h}\mathcal{U} }
\subseteq F^{n}\left( \mathcal{U} \tensor{A}\mathcal{U}
 \right).
\end{eqnarray*}
\end{example}

\begin{remark}\label{rem:delta}
In the case of the universal enveloping Hopf algebroids of Lie-Rinehart algebras, as we have seen in Example \ref{exam:SUL}, the translation map is always continuous and then the associated map $\cS_{*}$ of equation \eqref{Eq:tantip} is  a filtered map. Indeed, if we assume  that $\delta$ is filtered, then
$$
\taun(u)_{\Sscript{-}}\tensor{A} \taun(u)_{\Sscript{+}} \in \sum_{p+q=n} \mathsf{Im}\big( \taup\tensor{A}\tauq\big),
$$ 
for every $u \in F^n\cU$ and $n \geq 0$. Therefore, $\cS_{*}(F_n\cU^*) \subseteq F_n\cU^*$, for every $n \geq 0$.
\end{remark}

\begin{proposition}\label{prop:tantip}
Let $(A,\cU)$ be a co-commutative (right) Hopf algebroid endowed with an admissible filtration and assume $\delta$ is a filtered algebra map. Then the map $\cS_{*}$ of equation \eqref{Eq:tantip} is a morphism of complete algebras such that $\cS_*\circ s_* = t_*$ and $\cS_*\circ t_* = s_*$.
\end{proposition}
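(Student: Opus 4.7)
The plan is to verify, in turn, that $\cS_*$ is well-defined (takes values in $\cU^*$), that it is filtered (and hence continuous), that it is a unital morphism of algebras, and finally that it intertwines the source and target maps.

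For well-definedness, that $\cS_*(f)$ is right $A$-linear on $\cU$ follows from computing $\delta(u\cdot a)$ for $a\in A$: combining the multiplicativity of $\delta$ expressed in \eqref{Eq:B66} with the identity \eqref{Eq:B6} applied to $a$, one obtains $(u\cdot a)_-\tensor{A}(u\cdot a)_+ = u_-\tensor{A}u_+\cdot a$, whence right $A$-linearity of $\varepsilon$ yields $\cS_*(f)(u\cdot a)=\cS_*(f)(u)\cdot a$. Filteredness of $\cS_*$ is the content of Remark \ref{rem:delta}, which already records that the standing hypothesis that $\delta$ is a filtered algebra map forces $\cS_*(F_n\cU^*)\subseteq F_n\cU^*$ for every $n\geq 0$; since $\cU^*$ is complete by Lemma \ref{lemma:U*complete}, filteredness immediately entails continuity.

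Unitality reduces to the computation $\cS_*(\varepsilon)(u)=\varepsilon(\varepsilon(u_-)u_+)$. The key identity $\varepsilon(u_-)u_+=u$ can be obtained by applying $\varepsilon\otimes \mathrm{id}$ to both sides of $\beta(\delta(u))=1\tensor{A}u$ and using that the counit $\varepsilon$ of the $A$-coring $\cU$ is $A$-bilinear together with the counitality of $\Delta$. Multiplicativity is the main step and exploits both Lemma \ref{lema:Sstar} and the commutativity of $\cU^*$ (which is inherited from the co-commutativity of $\cU$): the second identity of Lemma \ref{lema:Sstar}, applied with $h=\cS_*(g)$, gives $(\cS_*(f)*\cS_*(g))(u)=\cS_*(g)(f(u_-)u_+)$, while the first identity, applied with the roles of $f$ and $g$ interchanged, gives $\cS_*(g*f)(u)=\cS_*(g)(f(u_-)u_+)$. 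Since $f*g=g*f$, both expressions coincide with $\cS_*(f*g)(u)$, proving multiplicativity.

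Finally, the identities $\cS_*\circ s_*=t_*$ and $\cS_*\circ t_*=s_*$ are verified by direct computation, using the explicit formulas $s_*(a)(u)=\varepsilon(u)a$ and $t_*(a)(u)=\varepsilon(au)$ coming from \eqref{Eq:vartheta}, the left $A$-linearity of $\varepsilon$, and the corollary $\varepsilon(u_-)\varepsilon(u_+)=\varepsilon(u)$ of the identity proved for unitality. One finds for instance
\[
\cS_*(s_*(a))(u)=\varepsilon\big(\varepsilon(u_-)\,a\,u_+\big)=a\,\varepsilon(u_-)\varepsilon(u_+)=a\,\varepsilon(u)=t_*(a)(u),
\]
and an entirely analogous computation yields the other identity. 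Conceptually, the main subtlety lies in reconciling the anti-multiplicative behaviour of $\delta$ encoded in \eqref{Eq:B66} with the would-be multiplicativity of $\cS_*$; the symmetry provided by co-commutativity of $\cU$ is precisely what makes Lemma \ref{lema:Sstar} effective in this respect.
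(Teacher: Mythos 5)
Your overall architecture matches the paper's: filteredness is delegated to Remark \ref{rem:delta}, and your multiplicativity argument (second identity of Lemma \ref{lema:Sstar} with $h=\cS_*(g)$, first identity to recognize $\cS_*(g*f)$, then commutativity of $\cU^*$) is exactly the paper's proof and is correct. The genuine problem is in the source--target computation, and it stems from a false premise that you invoke repeatedly: the counit $\varepsilon$ of a co-commutative right Hopf algebroid is \emph{not} left $A$-linear (nor is $\cU$ an $A$-coring in the bimodule sense). It is only right $A$-linear and satisfies $\varepsilon(uv)=\varepsilon(\varepsilon(u)v)$. Already for the Weyl algebra of Example \ref{exam:URSO} one has $\varepsilon(aY)=\varepsilon(Ya-Y(a))=-Y(a)\neq 0=a\,\varepsilon(Y)$. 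Consequently your ``corollary'' $\varepsilon(u_-)\varepsilon(u_+)=\varepsilon(u)$ is false as well (the expression is not even balanced over $A$: for $u=YX$ the representative $X\tensor{A}Y-XY\tensor{A}1$ of $\delta(u)$ gives the value $1$, while $\varepsilon(YX)=0$). Worse, your displayed chain actually terminates at $a\,\varepsilon(u)=\varepsilon(u)a=s_*(a)(u)$, not at $t_*(a)(u)=\varepsilon(au)$; so, taken literally, it ``proves'' $\cS_*\circ s_*=s_*$. The whole content of the assertion $\cS_*\circ s_*=t_*$ is that $s_*\neq t_*$, and they differ precisely by the failure of left $A$-linearity of $\varepsilon$ that you are assuming away.

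The correct computation never splits $\varepsilon$ across the tensor factors: $\cS_*(s_*(a))(u)=\varepsilon\big((\varepsilon(u_-)a)\,u_+\big)=\varepsilon\big(u_-\,a\,u_+\big)$ by the axiom $\varepsilon(\varepsilon(x)v)=\varepsilon(xv)$, and $u_-a\tensor{A}u_+=(au)_-\tensor{A}(au)_+$ by \eqref{Eq:B66} and \eqref{Eq:B6}, so \eqref{Eq:B555} yields $\varepsilon\big((au)_-(au)_+\big)=\varepsilon(au)=t_*(a)(u)$. (The paper runs the two-variable version for $\vartheta(a\otimes a')$, additionally using the centrality relation $a'u_-\tensor{A}u_+=u_-\tensor{A}u_+a'$ and right $A$-linearity of $\varepsilon$; unitality is then the case $a=a'=1$.) In particular you do not need the identity $\varepsilon(u_-)u_+=u$ for unitality. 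That identity does hold, but your derivation of it again leans on the alleged $A$-bilinearity of $\varepsilon$; a correct proof uses the counit axiom together with the Takeuchi-type conditions on $\Delta$ and $\delta$, not bilinearity. Your well-definedness check and the filteredness/multiplicativity steps are fine; it is only the handling of $\varepsilon$ against left multiplications that must be redone.
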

\begin{proof}
We need to check that $\cS_{*}$ is multiplicative and that it exchanges the source with the target, as we already know that it preserves the filtration, in view of Remark \ref{rem:delta}. Recall that the unit of $\cU^*$ is given by $\vartheta: A\tensor{}A \to \cU^*$ sending $a\tensor{}a' \mapsto \left[ u \mapsto a\varepsilon(a'u)\right]$. Given $a, a' \in A$ we have
\begin{equation*}
\begin{split}
\cS_{*}(\vartheta(a\tensor{}a'))(u) & = \varepsilon\Big( \vartheta(a\tensor{}a')(u_{\Sscript{-}}) \umas \Big) =  \varepsilon\Big( \varepsilon(a'u_{\Sscript{-}})a \umas \Big) =  \varepsilon\Big( a'u_{\Sscript{-}}a \umas \Big) =   \varepsilon\Big( u_{\Sscript{-}}a \umas a' \Big) \\
 & = \varepsilon\Big( u_{\Sscript{-}}a \umas \Big)a'  = \varepsilon\Big( (au)_{\Sscript{-}} (au)_{\Sscript{+}} \Big)a'  \overset{\eqref{Eq:B555}}{=}   \varepsilon(au)a',
\end{split}
\end{equation*}
whence $\cS_{*} (\vartheta (a\tensor{}a')) = \vartheta(a'\tensor{}a)$. Therefore, $\cS_{*} \circ s_* = t_*$ and $\cS_{*} \circ t_* = s_*$, where $s_*, t_*$ are as in equation \eqref{Eq:vartheta}. Let us check that $\cS_{*}$ is multiplicative. If we consider $f,g  \in \cU^*$ and  $u \in\cU$, we have
\begin{equation*}
\Big(\cS_{*}(f) *\cS_{*}(g)\Big)(u)  \overset{\eqref{Eq:S1}}{=} \Big(\cS_{*}(g) \leftharpoonup f(u_{\Sscript{-}})\Big) (u_{\Sscript{+}}) =   \cS_{*}(g)  \Big(f(u_{\Sscript{-}}) \, u_{\Sscript{+}} \Big)
\overset{\eqref{Eq:S1}}{=}  \cS_{*}(g*f) (u)= \cS_{*}(f*g)(u).
\end{equation*}
This shows that $\cS_{*}(f*g)=\cS_{*}(f) * \cS_{*}(g)$, which finishes the proof.
\end{proof}

The following is the main result of this subsection.

\begin{proposition}\label{prop:Bosco}
Let $(A,\cU)$ and $\delta$ be as in Proposition \ref{prop:tantip}. Then $(A,\cU^*)$ is a complete Hopf algebroid with structure maps $s_*$, $t_*$, $\Delta_*$, $\varepsilon_*$ and $\cS_*$. In particular, this is the case for the universal enveloping Hopf algebroid $\cU=\cV_{\Sscript{A}}(L)$ of a Lie-Rinehart algebra $(A,L)$, where $L_{\Sscript{A}}$ is finitely generated and projective.
\end{proposition}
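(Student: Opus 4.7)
The plan is to assemble the complete Hopf algebroid structure from the results established throughout the section. Lemma \ref{lemma:U*complete} shows that $\cU^{*}$ is a complete commutative algebra; combining Lemma \ref{lemma:deltaepsialg} and Proposition \ref{prop:coalgebra}, the pair $(\Delta_{*},\varepsilon_{*})$ endows it with a coalgebra structure in the monoidal category of complete $A$-bimodules, with both structure maps being morphisms of complete algebras; Proposition \ref{prop:tantip} states that $\cS_{*}$ is a morphism of complete algebras satisfying $\cS_{*}\circ s_{*}=t_{*}$ and $\cS_{*}\circ t_{*}=s_{*}$. Hence, the only conditions still to be verified are the involutivity $\cS_{*}^{2}=\id_{\cU^{*}}$ and the two antipode convolution identities
\begin{equation*}
\boldsymbol{\mu}\circ(\cS_{*}\cmptens{A}\id)\circ\Delta_{*}=t_{*}\circ\varepsilon_{*} \qquad \text{and} \qquad \boldsymbol{\mu}\circ(\id\cmptens{A}\cS_{*})\circ\Delta_{*}=s_{*}\circ\varepsilon_{*},
\end{equation*}
where $\boldsymbol{\mu}$ denotes the multiplication of the complete algebra $\cU^{*}\cmptens{A}\cU^{*}$ recalled in Remark \ref{rem:cmptensalgebra}.

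For the involutivity, my plan is to unwind the definition \eqref{Eq:tantip} twice. For $f\in\cU^{*}$ and $u\in\cU$, one gets
\begin{equation*}
\cS_{*}^{2}(f)(u) \,=\, \varepsilon\bigl(\cS_{*}(f)(u_{-})\,u_{+}\bigr) \,=\, \varepsilon\bigl(\varepsilon(f(u_{-,-})\,u_{-,+})\,u_{+}\bigr) \,=\, \varepsilon\bigl(f(u_{-,-})\,u_{-,+}\,u_{+}\bigr),
\end{equation*}
where the last equality invokes the counit condition $\varepsilon(\varepsilon(x)y)=\varepsilon(xy)$ from \S\ref{ssec:CoHAlg}. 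The key step is then to iterate the identities \eqref{Eq:B4}--\eqref{Eq:B55} for the translation map in order to rewrite the tensor $u_{-,-}\tensor{A}u_{-,+}\tensor{A}u_{+}$, so that \eqref{Eq:B555} (i.e.,~$x_{-}x_{+}=\varepsilon(x)$) collapses the right-hand side to $\varepsilon(f(u))=f(u)$.

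For the two convolution identities, the strategy is to evaluate at $u\in\cU$ and exploit Lemma \ref{lema:Sstar}, whose formula \eqref{Eq:S1} gives $(\cS_{*}(g)*h)(u)=h(g(u_{-})u_{+})$. Combining this with the characterization \eqref{Eq:fuv} of $\Delta_{*}$ as a Cauchy sequence of elementary tensors, one obtains
\begin{equation*}
\bigl(\boldsymbol{\mu}\circ(\cS_{*}\cmptens{A}\id)\circ\Delta_{*}\bigr)(f)(u) \,=\, \limn \sum_{(f)} f_{\Sscript{(2),n}}\bigl(f_{\Sscript{(1),n}}(u_{-})\,u_{+}\bigr),
\end{equation*}
which via \eqref{Eq:fuv} and \eqref{Eq:B555} is identified with $\varepsilon(f(1)\,u)=t_{*}(\varepsilon_{*}(f))(u)$. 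The second identity is symmetric (or follows from the commutativity of $\cU^{*}$, which is in turn a consequence of the co-commutativity of $\Delta$). As in the proof of Lemma \ref{lemma:deltaepsialg}, the passage to the topological completion is handled by testing the identities against the projections $\Pi_{p,q}$ of diagram \eqref{diag:Delta*}; this careful bookkeeping at the level of Cauchy sequences is the step I expect to be the most technically delicate.

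Finally, the ``in particular'' part of the statement is a direct consequence of Example \ref{exam:SUL}: for $\cU=\cV_{\Sscript{A}}(L)$ with $L_{\Sscript{A}}$ finitely generated and projective, the translation map $\delta$ is already known to be a filtered algebra map, so all the hypotheses of the first assertion are satisfied and the first part of the proposition applies.
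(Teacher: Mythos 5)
Your overall architecture coincides with the paper's: assemble the algebra, coalgebra and antipode data from Lemma \ref{lemma:U*complete}, Proposition \ref{prop:coalgebra} and Proposition \ref{prop:tantip}, verify the antipode axioms by hand, and settle the particular case via Example \ref{exam:SUL}. Your involutivity argument is also the paper's: the crux is the identity $u_{\Sscript{-,\,-}}\tensor{A}u_{\Sscript{-,\,+}}u_{\Sscript{+}}=u\tensor{A}1$ in $\cU_{\Sscript{A}}\tensor{A}{}_{\Sscript{A}}\cU$ (equation \eqref{Eq:uminus}), which the paper obtains by applying the injective map $\beta$ to that tensor and using \eqref{Eq:B4} twice; your plan of iterating \eqref{Eq:B4}--\eqref{Eq:B55} is the right idea, though you should make the $\beta$-trick explicit.

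There is, however, a step in your treatment of the convolution identities that fails as written. After applying the second formula of \eqref{Eq:S1} to $\cS_{*}(f_{\Sscript{(1),\,n}})*f_{\Sscript{(2),\,n}}$ you land on $\limn\sum_{(f)} f_{\Sscript{(2),\,n}}\bigl(f_{\Sscript{(1),\,n}}(u_{\Sscript{-}})u_{\Sscript{+}}\bigr)$ and then invoke \eqref{Eq:fuv}. But \eqref{Eq:fuv} characterizes $\Delta_{*}(f)$ through the pairing $\sum f_{\Sscript{(1),\,n}}(f_{\Sscript{(2),\,n}}(u)v)$ with the legs in that specific order, and $\Delta_{*}$ is \emph{not} cocommutative: it is dual to the noncommutative multiplication of $\cU$, and concretely, by \eqref{eq:delta} the swapped pairing reads $\sum_{i}\lambda_{i}^{n}\bigl(f(e_{i}^{n}u)\,v\bigr)$, which differs from $f(uv)$ because the $\lambda_{i}^{n}$ are only right $A$-linear while $f(e_{i}^{n}u)$ multiplies $v$ on the left. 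The paper circumvents this by verifying the \emph{other} identity, $f_{\Sscript{(1)}}*\cS_{*}(f_{\Sscript{(2)}})=s_{*}\varepsilon_{*}(f)$: one first uses the commutativity of the convolution product (a consequence of the cocommutativity of $\Delta_{\cU}$) to rewrite it as $\cS_{*}(f_{\Sscript{(2)}})*f_{\Sscript{(1)}}$, so that \eqref{Eq:S1} produces $f_{\Sscript{(1),\,n}}(f_{\Sscript{(2),\,n}}(u_{\Sscript{-}})u_{\Sscript{+}})$ with the legs in the order required by \eqref{Eq:fuv}; this identity together with $\cS_{*}^{2}=\id$ then suffices. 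For the same reason your remark that the remaining identity ``follows from the commutativity of $\cU^{*}$'' is not correct: commutativity swaps the convolution factors, not the Sweedler legs. Everything else in your proposal is sound.
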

\begin{proof}
We only need to check that the algebra map $\cS_{*}$ enjoys the properties for being an algebraic antipode since it is already continuous. Let $f \in \cU^*$ and take an arbitrary element $u \in \cU$. Then
\begin{equation*}
\begin{split}
{\limn}\sum_{(f)} \Big( f_{\Sscript{1,\,n}}*\cS_*(f_{\Sscript{2,\,n}})\Big) (u) &  \,=\,  {\limn}\sum_{(f)}  \Big( \cS_*(f_{\Sscript{2,n}}) *f_{\Sscript{1,\,n}}\Big) (u)  \,\overset{\eqref{Eq:S1}}{=}\,  {\limn} \sum_{(f)} f_{\Sscript{1,\,n}} \Big(f_{\Sscript{2,\,n}}(\umin)\,\umas \Big) \\
& \overset{\eqref{Eq:fuv}}{=}\,  f(\umin\umas) \,=\, f(\varepsilon(u))\,=\, f(1)\varepsilon(u)\,=\, s_{*}(\varepsilon_{*}(f))(u).
\end{split}
\end{equation*}
Therefore,  for every $f \in \cU^*$, we have
$$
{\limn}\sum_{(f)} \Big(  f_{\Sscript{1,\,n}} * \cS_{*}(f_{\Sscript{2,\,n}} )\Big) \,=\, s_{*}(\varepsilon_{*}(f)).
$$
Now let us check that $\cS_*^2=id_{\Sscript{\cU^*}}$ which will be sufficient to claim that $\cS_*$ is an antipode for the complete bialgebroid $\cU^*$. Recall that $\delta: \cU_{\Sscript{A}} \to \cU_{\Sscript{A}} \tensor{A} { }_{\Sscript{A}}\cU$ is right $A$-linear, so that we can consider the map
$$
(\delta\tensor{A}{}_{\Sscript{A}}\cU) \circ \delta: \cU_{\Sscript{A}} \longrightarrow \cU_{\Sscript{A}} \tensor{A} {}_{\Sscript{A}}\cU_{\Sscript{A}} \tensor{A} \cU_{\Sscript{A}},\; \; \Big( u \longmapsto u_{\Sscript{-,\,-}}\tensor{A} u_{\Sscript{-,\,+}}\tensor{A} u_{\Sscript{+}}\Big)
$$
Let us compute the image of the element $u_{\Sscript{-,\,-}}\tensor{A} u_{\Sscript{-,\,+}} u_{\Sscript{+}} \in \cU_{\Sscript{A}} \tensor{A} {}_{\Sscript{A}}\cU$ by the map $\beta$:
\begin{eqnarray*}
\beta\big( u_{\Sscript{-,\,-}}\tensor{A} u_{\Sscript{-,\,+}} u_{\Sscript{+}}  \big) & =&  \big( u_{\Sscript{-,-}}\tensor{A} 1\big) . (u_{\Sscript{-,\,+}} u_{\Sscript{+}})   \quad \in\;  \cU_{\Sscript{A}} \tensor{} \cU_{\Sscript{A}}\\
&=& \Big( u_{\Sscript{-,\,-}} u_{\Sscript{-,\,+,\,1}}\tensor{A} u_{\Sscript{-,\,+,\,2}}  \Big). \umas  \\
&\overset{\eqref{Eq:B4} }{=}& \Big( 1\tensor{A} \umin\Big) . \umas  \,\,=\,\,  u_{\Sscript{+,\,1}}\tensor{A}\umin u_{\Sscript{+,2}} \\ &=&  u_{\Sscript{+,\,2}}\tensor{A}\umin u_{\Sscript{+,\,1}} \,\, \overset{\eqref{Eq:B4} }{=}\,\, u\tensor{A} 1\,\,=\,\, \beta(u\tensor{A} 1)
\end{eqnarray*}
Therefore, for every $u \in \cU$,  we have
\begin{equation}\label{Eq:uminus}
u_{\Sscript{-,\,-}}\tensor{A} u_{\Sscript{-,\,+}} \, u_{\Sscript{+}} \,\,=\,\,  u\tensor{A} 1 \, \in \, \cU_{\Sscript{A}} \tensor{A} \, {}_{\Sscript{A}} \cU.
\end{equation}
In this way, taking a function $f \in \cU^*$ and an element $u \in \cU$, we then get
\begin{equation*}
\cS_*^2(f)(u) =\cS_*\big( \cS_*(f)  \big) (u) = \varepsilon\Big( \cS_*(f)(\umin) \, \umas  \Big) = \varepsilon\Big( f(u_{\Sscript{-,\,-}}) \, u_{\Sscript{-,\,+}} \, \umas  \Big) \overset{\eqref{Eq:uminus}}{=} \varepsilon\big( f(u) \, 1\big) = f(u),
\end{equation*}
whence $\cS_*^2=id_{\Sscript{\cU^*}}$ and this finishes the proof of the fact that $(\cU^*, \Delta_{*}, \varepsilon_{*}, \cS_{*})$ is a complete Hopf algebroid. The particular case follows immediately from Remark \ref{rem:delta}.
\end{proof}

\begin{remark}\label{rem:deltaii}
As we have seen in this subsection, the fact that $\delta$ is a continuous map seems to be essential in carrying out the construction of the topological antipode for the convolution algebra $\cU^*$ of an admissible co-commutative (right) Hopf algebroid $\cU$. However, notice that no restrictive condition was imposed on the $A$-module $L$ in order to obtain the continuity of the translation map $\delta$ for the Hopf algebroid $\cU=\cV_{\Sscript{A}}(L)$. In particular, the fact that the filtration on $\cU$ is admissible is not used.

Besides, both conditions (i.e., admissibility of the filtration of $\cU$ and continuity of $\delta$) are satisfied for any universal enveloping algebra of a finitely generated and projective Lie-Rinehart algebra, that is to say, our results apply to the classical geometric context of Lie algebroids that was of our interest. 

Summing up, we have been able to show that the convolution algebra $\cU^*$ of a filtered co-commutative (right) Hopf algebroid $\cU$ is a complete Hopf algebroid if $\cU$ is admissibly filtered and $\delta$ is continuous. A question which we consider worthy to be addressed is if the converse is true as well. If this is not the case, we would be very glad to see a counterexample. In this sense, providing examples of admissibly filtered co-commutative Hopf algebroid whose translation map is not filtered could be of great interest. Concluding, we think that both questions deserve further attention but also that they are out of the purposes of the present paper. Therefore, we will not go into details here and we will leave them for future investigation. 
\end{remark}

\section{The main morphism of complete commutative Hopf algebroids}\label{sec:MCH}
In this section we give our main result. It is concerned with the universal (right) Hopf algebroid of a  Lie-Rinehart algebra with an admissible filtration, as in \S\ref{ssec:FUstra}. This in particular encompasses the situation of a Lie algebroid over a smooth (connected) manifold, by using the global sections functor as in \S\ref{ssec:muchocaldo}.

\subsection{The finite dual of a co-commutative Hopf algebroid.}\label{ssec:Fdual}
We recall from \cite{LaiachiGomez} the construction of what is known as \emph{the finite dual Hopf algebroid}. This construction is one of the main tools used in building up our application in the forthcoming subsections, so it is convenient to recall it in some detail.

Following \cite{LaiachiGomez}, given a (right) co-commutative Hopf algebroid $(A,\cU)$, we consider the category $\cat{A}_{\Sscript{\cU}}$ of those right $\cU$-modules whose underlying right $A$-module structure is finitely generated and projective. This category is a symmetric rigid monoidal linear category with identity object $A$, whose right $\cU$-action is given by $a \centerdot  u =\varepsilon(au)$. Furthermore, the forgetful functor  $\boldsymbol{\omegaup}: \cat{A}_{\Sscript{\cU}} \to \proj{A}$ to the category of finitely generated and projective $A$-modules,  plays the role of \emph{the fibre functor} which is a non trivial symmetric strict monoidal faithful functor (as we are assuming that ${\rm Spec}(A) \neq \emptyset$).

The tensor product of two right $\cU$-modules $M$ and $N$ is the $A$-module $M\tensor{A}N$ endowed with the following right $\cU$-action:
$$
(m\tensor{A}n) \centerdot u = (m \centerdot u_{\Sscript{1}}) \tensor{A} (n \centerdot u_{\Sscript{2}}).
$$
The dual object of a right $\cU$-module $M$ belonging to $\cat{A}_{\Sscript{\cU}}$ is the $A$-module $M^{*}=\hom{-A}{M}{A}$ with the right $\cU$-action
\begin{equation}\label{Eq:star}
\varphi \centerdot u : M \longrightarrow A, \quad \Big( m \longmapsto \varphi(m\centerdot u_{\Sscript{-}}) \centerdot u_{\Sscript{+}}  \Big),
\end{equation}
where $u_{\Sscript{-}}\tensor{A}u_{\Sscript{+}} = \beta^{-1}(1\tensor{A}u)$. We will often omit the symbol $\centerdot$ in what follows, when the action will be clear from the context.

The commutative Hopf algebroid constructed from the data $\left(\cat{A}_{\Sscript{\cU}}, \boldsymbol{\omegaup}\right)$, will be denoted by $(A,\cU^{\bcirc})$ and refereed to as \emph{the finite dual Hopf algebroid} of $(A,\cU)$.
From its own definition, $\cU^{\bcirc}$ is the quotient algebra
\begin{equation}\label{Eq:Uo}
\cU^{\bcirc}=\frac{\underset{M\,\in\, \mathrm{Ob}
( \cat{A}_{\Sscript{\cU}}) }{\bigoplus} M^{\ast }\tensor{T_{M}}M
}{\mathfrak{J}_{\Sscript{\cat{A}_{\cU}}}}
\end{equation}
by the two sided ideal $\mathfrak{J}_{\Sscript{\cat{A}_{\cU}}}$ generated by  the set
\begin{equation*}
\Big\{ \big(\varphi \tensor{T_{N}}f\left( m\right)\big) -\big(\varphi \circ f\tensor{T_{M}}m\big) \mid \varphi \in \left. N^{\ast }\right. ,m\in M,f\in T_{MN},M,N\in \mathrm{Ob}\left( {\cat{A}_{\cU}} \right) \Big\},
\end{equation*}
where we denoted by $M$ and $\left. M^{\ast }\right. $ the objects $\boldsymbol{\omegaup}
\left( M\right) $ and $\left. \boldsymbol{\omegaup} \left( M\right)^{\ast } \right. $ and
where we used the following short forms: $T_{MN}:=\hom{\Sscript{\cat{A}_{\cU}}}{M}{N}$, $T_{M}:=\hom{\Sscript{\cat{A}_{\cU}}}{M}{M}$. Therein, we also identify each element of the form $\varphi \tensor{T_{M}}m \in M^{\ast }\tensor{T_{M}}M$ with its image in the direct sum $\underset{\Sscript{M\,\in\, \mathrm{Ob}(\cat{A}_{\Sscript{\cU}})}}{\bigoplus}  M^{\ast }\tensor{T_{M}}M$.

The structure maps of the finite dual Hopf algebroid $(A,\cU^{\bcirc})$ are given as follows. Write $\overline{\varphi \otimes _{T_{M}}m}$ for the equivalence class of  the image (in the above direct sum) of a generic element for the form $\varphi \tensor{T_M}m \in M^*\tensor{T_M}M$, for some object $M \in \cat{A}_{\Sscript{\cU}}$. Since all involved maps are linear, we will be dealing most of all just with generic elements of the form $\overline{\varphi \tensor{T_{M}}m}$, bypassing the more general summation notation. Thus the structure maps on $\cU^{\bcirc }$ are given by
\begin{eqnarray*}
&&\sfun{\texttt{u}}{\K }{\cU^{\bcirc }}{1_{\K}}{\overline{\id_{A}\tensor{\K}1_{A}}}, \\
&&\sfun{\texttt{m}}{\cU^{\bcirc }\tensor{}\cU^{\bcirc }}{\cU^{\bcirc }}{\overline{\psi \tensor{ T_{N}}n}\tensor{}\overline{\varphi \tensor{T_{M}}m}}{\overline{\left( \psi \star \varphi\right) \tensor{T_{M\tensor{A}N}}\left( m\tensor{A}n\right) }}, \\
&&\sfun{\etaup}{A\tensor{}A}{\cU^{\bcirc }}{a\tensor{}b}{\overline{l_{a}\tensor{\K}b}},\;\; \text{where } l_a: A \to A, \big( 1 \mapsto a \big)\text{ is the left multiplication by }\, a,\\
&&\sfun{\varepsilon_{\circ}}{\cU^{\bcirc }}{A}{\overline{\varphi \tensor{T_{M}}m}}{\varphi \left( m\right)} ,\\
&&\sfun{\Delta_{\circ}}{\cU^{\bcirc }}{\cU ^{\bcirc }\tensor{A}\cU^{\bcirc }}{\overline{\varphi \tensor{T_{M}}m}}{\sum_{i=1}^{r}\overline{\varphi \tensor{T_{M}}e_{i}}\tensor{A}\overline{e_{i}^{\ast }\tensor{T_{M}}m}}, \;\; \text{where } \{e_i,e_i^*\}_i \text{ is a dual basis for } M_{\Sscript{A}}\\
&&\sfun{\cS_{\circ}}{\cU^{\bcirc }}{\cU^{\bcirc }}{\overline{\varphi \tensor{T_{M}}m}}{\overline{\mathrm{ev}_{m}\tensor{T_{\left. M^{\ast }\right. }}\varphi }},  \;\; \text{where } {\rm ev}_m: M^* \to A\; \text{ is the evaluation at } m \text{ map}.
\end{eqnarray*}
For every $\psi \in \left. N^{\ast }\right. $ and $\varphi \in \left. M^{\ast }\right.$,  the map $\psi \star \varphi :M\tensor{A}N\rightarrow A$ acts as $m\tensor{A}n\mapsto \varphi \left( m\right) \psi \left( n\right) $.

Notice that there is a linear map
\begin{equation}\label{Eq:zeta}
\zeta: \cU^{\bcirc} \longrightarrow \cU^{*},\quad \Big(  \bara{\varphi\tensor{T_M}m} \longmapsto \left[ u \mapsto \varphi(mu)\right]  \Big).
\end{equation}

The following lemma is a straightforward computation, see \cite{LaiachiGomez}.
\begin{lemma}\label{lema:zeta}
The linear map $\zeta$ is an homomorphism of $(A\tensor{}A)$-algebras.
\end{lemma}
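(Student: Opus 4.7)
The plan is to verify the three properties that characterize $\zeta$ as an $(A\otimes A)$-algebra homomorphism: well-definedness, preservation of the unit, multiplicativity, and compatibility with the $(A\otimes A)$-ring structures. Most of these are essentially unwinding definitions; the core nontrivial check is multiplicativity.

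First I would verify that the formula $\overline{\varphi \tensor{T_M} m} \mapsto \bigl[u\mapsto \varphi(mu)\bigr]$ descends from the direct sum $\bigoplus_M M^*\tensor{T_M} M$ to the quotient $\cU^{\bcirc}$. This requires two things: that the assignment is balanced over $T_M$, and that it vanishes on the generators of the ideal $\mathfrak{J}_{\cat{A}_{\cU}}$. Both reduce to the same observation: for any $\cU$-linear map $f\colon M\to N$ (so $f\in T_{MN}$) and any $\varphi\in N^*$, $m\in M$, $u\in\cU$, one has $\varphi(f(m)u)=\varphi(f(mu))=(\varphi\circ f)(mu)$, using $\cU$-linearity of $f$. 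One also checks that $u\mapsto \varphi(mu)$ is right $A$-linear (so it lies in $\cU^*$) because $\varphi$ is right $A$-linear and the right $A$-action on $\cU$ commutes with multiplication from the left.

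Next, I would check the unit. Since the unit of $\cU^{\bcirc}$ is $\overline{\id_A \tensor{\K} 1_A}$, applying $\zeta$ gives the map $u\mapsto \id_A(1_A\cdot u)=\varepsilon(u)$, where we use that the right $\cU$-action on $A$ is $a\cdot u=\varepsilon(au)$. This is precisely the unit $\varepsilon$ of the convolution algebra $\cU^*$. Compatibility with the $(A\otimes A)$-algebra structure is similarly straightforward: $\zeta\bigl(\etaup(a\otimes b)\bigr)=\zeta(\overline{l_a\tensor{\K}b})=\bigl[u\mapsto a\,\varepsilon(bu)\bigr]=\vartheta(a\otimes b)$, by the definitions of $\etaup$ and of $\vartheta$ in \eqref{Eq:vartheta}.

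The main obstacle, and where I would spend most care, is multiplicativity. Starting from the product in $\cU^{\bcirc}$,
\[
\zeta\bigl(\overline{\psi \tensor{T_N} n}\cdot \overline{\varphi \tensor{T_M} m}\bigr)(u) = (\psi\star\varphi)\bigl((m\tensor{A}n)\cdot u\bigr) = (\psi\star\varphi)(mu_{1}\tensor{A}nu_{2}) = \varphi(mu_{1})\,\psi(nu_{2}),
\]
using the $\cU$-action on $M\tensor{A}N$ and the definition of $\psi\star\varphi$. On the other hand, the convolution product gives
\[
\bigl(\zeta(\overline{\psi \tensor{T_N} n})*\zeta(\overline{\varphi \tensor{T_M} m})\bigr)(u) = \psi(nu_{1})\,\varphi(mu_{2}).
\]
These two expressions are equal because $A$ is commutative and $\cU$ is co-commutative: commutativity of $A$ permits swapping the scalars $\varphi(mu_1)$ and $\psi(nu_2)$ in $A$, while co-commutativity of $\Delta$ allows renaming $u_1\otimes_A u_2$ to $u_2\otimes_A u_1$. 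Combining the two yields the desired identity $\varphi(mu_1)\psi(nu_2)=\psi(nu_1)\varphi(mu_2)$, proving multiplicativity. This last step is the only one that genuinely uses the co-commutativity hypothesis on $\cU$, and it is where I expect readers to want to pause—the rest is bookkeeping against the structure maps listed above.
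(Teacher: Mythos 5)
Your verification is correct and is exactly the ``straightforward computation'' the paper alludes to (the paper itself gives no proof of Lemma \ref{lema:zeta}, deferring to \cite{LaiachiGomez}): well-definedness via $\cU$-linearity of the morphisms in $\cat{A}_{\Sscript{\cU}}$, the unit and $\etaup$/$\vartheta$ compatibility by direct evaluation, and multiplicativity from commutativity of $A$ together with co-commutativity of $\Delta$ (the swap $u_1\tensor{A}u_2=u_2\tensor{A}u_1$ being legitimate because the assignment $v\tensor{A}w\mapsto\varphi(mv)\psi(nw)$ is $A$-balanced). Nothing to add.
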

It is noteworthy  to mention that the algebra map $\zeta$, in contrast with the case of algebras over a field, is not known to be injective. However, if the base algebra $A$ is  a Dedekind domain for example, then it is guaranteed that $\zeta$  is injective for every $\cU$, see   \cite{LaiachiGomez} for more details.

\subsection{The completion of the finite dual and the convolution algebra}\label{ssec:zeta}

Let $(A,\cU)$ be a co-commutative (right) Hopf algebroid and consider its finite dual $(A,\cU^{\bcirc})$ as a commutative Hopf algebroid with structure maps given as  in \S\ref{ssec:Fdual}.  Here  we assume that $\cU$  is  endowed with an admissible (increasing) filtration $\{F^n\cU\}_{n\,\in \, \mathbb{N}}$ as in \S\ref{ssec:FUstra}. The admissible filtration on the Hopf algebroid $\cU$ induces a filtration on the convolution algebra $\cU^*$ given as in \eqref{Eq:FUstar} of \S\ref{ssec:FUstra}. It turns out that $(A,\cU^*)$ with this filtration is a complete Hopf algebroid with structure maps explicitly given in \S\ref{sec:Ustar}.

\begin{proposition}\label{prop:App}
Let $(A,\cU)$ be a co-commutative (right) Hopf algebroid with an admissible filtration and consider its finite dual $(A,\cU^{\bcirc})$. Then the  canonical map $\zeta:\cU^{\bcirc}\to\cU^*$ of equation \eqref{Eq:zeta} is filtered with respect to the filtrations $F_n\cU^{\bcirc}=\cK^n$ and $F_{n+1}\cU^*=\ann{F^{n}\cU}$ for all $n\geq 0$ as in \eqref{Eq:FUstar}, where $\cK=\ker{\varepsilon_{\circ}:\cU^{\bcirc}\to A}$ is the kernel of the counit of $\cU^{\bcirc}$.
\end{proposition}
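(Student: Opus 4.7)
The plan is to prove the filtered property by checking it on the ``generator'' $\cK$ and then bootstrapping multiplicatively. First I would unravel what $F_1\cU^*$ looks like concretely. By \eqref{Eq:FUstar}, $F_1\cU^* = \ann{F^0\cU} = \ann{s(A)}$, where $s\colon A\to\cU$ is the source (and $F^0\cU = A$ via $s$ by the definition of admissible filtration in \S\ref{ssec:AFiltration}). Since every $f\in\cU^*$ is right $A$-linear and $s(a) = 1_\cU\cdot a$, one has $f(s(a))=f(1_\cU)a$ for all $a\in A$. Hence $f\in\ann{s(A)}$ if and only if $f(1_\cU)=0$, i.e.\ $F_1\cU^* = \ker\varepsilon_*$, where $\varepsilon_*$ is the counit of $\cU^*$ (this agrees with Remark \ref{rem:Kstar}).

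Next I would check the compatibility of $\zeta$ with the counits. For a generating element $\bara{\varphi\tensor{T_M}m}\in\cU^\bcirc$, one has
\begin{equation*}
\varepsilon_*\bigl(\zeta\bigl(\bara{\varphi\tensor{T_M}m}\bigr)\bigr) = \zeta\bigl(\bara{\varphi\tensor{T_M}m}\bigr)(1_\cU) = \varphi(m\cdot 1_\cU) = \varphi(m) = \varepsilon_\circ\bigl(\bara{\varphi\tensor{T_M}m}\bigr),
\end{equation*}
so by linearity $\varepsilon_*\circ\zeta = \varepsilon_\circ$. Combined with the previous paragraph, this forces
\begin{equation*}
\zeta(\cK) = \zeta(\ker\varepsilon_\circ) \subseteq \ker\varepsilon_* = F_1\cU^*.
\end{equation*}

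Finally, I would promote this degree-one statement to arbitrary $n$ using multiplicativity at both ends. Lemma \ref{lema:zeta} says that $\zeta$ is an algebra homomorphism, so $\zeta(\cK^n)$ is contained in the subalgebra generated by products of $n$ elements of $\zeta(\cK)$; that is, $\zeta(\cK^n)\subseteq \zeta(\cK)^n$, where the product on the right is the convolution product in $\cU^*$. On the other hand, Lemma \ref{lemma:U*complete} establishes that $F_p\cU^**F_q\cU^*\subseteq F_{p+q}\cU^*$, so iterating gives $(F_1\cU^*)^n\subseteq F_n\cU^*$. Combining these,
\begin{equation*}
\zeta(\cK^n) \subseteq \zeta(\cK)^n \subseteq (F_1\cU^*)^n \subseteq F_n\cU^*,
\end{equation*}
which is precisely the claim that $\zeta$ is filtered.

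There is no real obstacle here: the content of the proposition is really just ``$\zeta$ preserves the augmentation ideal,'' and both filtrations are (adic-like) powers of that ideal on the respective sides. The only step that deserves some care is the identification $F_1\cU^* = \ker\varepsilon_*$, which uses crucially the right $A$-linearity built into the definition of $\cU^*$.
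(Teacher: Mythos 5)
Your proof is correct and follows essentially the same route as the paper's: identify $F_1\cU^*=\ker{\varepsilon_*}$ via right $A$-linearity, verify $\varepsilon_*\circ\zeta=\varepsilon_\circ$ on generators, and then bootstrap with multiplicativity of $\zeta$ and the submultiplicativity $F_p\cU^* * F_q\cU^*\subseteq F_{p+q}\cU^*$ from Lemma \ref{lemma:U*complete}. Nothing further is needed.
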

\begin{proof}
It can be easily checked that $\varepsilon_*\circ \zeta=\varepsilon_\circ$, where $\varepsilon_*:\cU^*\to A$ and $\varepsilon_\circ:\cU^{\bcirc} \to A$ are the counits. In particular this implies that $\zeta\left(\cK\right)\subseteq \ker{\varepsilon_*}$. Hence the claim will be proved if we will be able to show that $\ker{\varepsilon_*}\subseteq F_1\cU^*=\ann{F^0\cU}=\ann{A}$, because in this case multiplicativity of $\zeta$ will imply that
\begin{equation*}
\zeta\left(F_n\cU^\bcirc\right)=\zeta(\cK^n)\subseteq \zeta(\cK)^n\subseteq \left(F_1\cU^*\right)^n\subseteq  F_n\cU^*.
\end{equation*}
However, if $f\in\ker{\varepsilon_*}$ then $f(1_\cU)=0$, whence $f(\tau_0(a))=f(1_\cU\blacktriangleleft a)=f(1_\cU)a=0$. Consequently, $F^0\cU=A\subseteq \ker{f}$, from which it follows that $\ker{\varepsilon_*}\subseteq \ann{F^0\cU}$ as desired.
\end{proof}

In light of Proposition \ref{coro:CHc}, $(A,\what{\cU^{\bcirc}})$ is a complete Hopf algebroid. On the other hand, we know from Proposition \ref{prop:Bosco} that $(A,\cU^{*})$ admits a structure of complete Hopf algebroid whenever the translation map of $\cU$ is a filtered algebra map.  Combining all this allows us to improve the content of Lemma \ref{lema:zeta} and claim our main result as follows.

\begin{theorem}\label{thm:triangle}
Let $(A,\cU)$ be a co-commutative (right) Hopf algebroid with an admissible filtration and  assume that the translation map $\delta$ of $\cU$ is a filtered algebra map. Then the $(A\otimes_{}A)$-algebra map $\zeta: \cU^{\bcirc} \to \cU^*$ of equation \eqref{Eq:zeta} factors through a continuous morphism $\what{\zeta}: \what{\cU^{\bcirc}} \to \cU^*$ of complete Hopf algebroids. Thus we have a commutative diagram:
$$
\xymatrix@R=15pt@C=30pt{ \cU^{\bcirc} \ar@{->}^-{\zeta}[rr] \ar@{->}_-{\gamma}[rd]  &  & \cU^* \\ & \what{\cU^{\bcirc}} \ar@{->}_-{\what{\zeta}}[ru] & }
$$
In particular, this applies to $\cU=\cV_{\Sscript{A}}(L)$, the universal enveloping Hopf algebroid of any Lie-Rinehart algebra $(A,L)$ such that $L_{\Sscript{A}}$ is a finitely generated and projective module.
\end{theorem}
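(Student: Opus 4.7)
The plan is to leverage the machinery developed throughout the paper and reduce the statement to checking compatibility with the six structure maps of a complete Hopf algebroid. First, Lemma \ref{lema:zeta} already gives that $\zeta$ is an $(A\otimes A)$-algebra homomorphism, and Proposition \ref{prop:App} shows that $\zeta$ is filtered with respect to the $\cK$-adic filtration on $\cU^{\bcirc}$ and the filtration $F_n\cU^*=\mathsf{Ann}(F^{n-1}\cU)$ on $\cU^*$. Since $\cU^*$ is already complete (Lemma \ref{lemma:U*complete}), the universal property of completion (applied as in the proof of Proposition \ref{coro:CHc}) produces a continuous algebra map $\what{\zeta}:\what{\cU^{\bcirc}}\to \what{\cU^*}\cong\cU^*$ fitting into the stated triangle. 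This settles the existence of $\what{\zeta}$ as a continuous $(A\otimes A)$-algebra map, hence the compatibility with source, target and unit is automatic.

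Next I would verify that $\what{\zeta}$ intertwines the remaining structure maps. For the counit, the identity $\varepsilon_*\circ\zeta=\varepsilon_{\circ}$ holds at the algebraic level by direct inspection of the definitions, and it extends to $\what\zeta$ by continuity since both $\varepsilon_*$ and $\varepsilon_{\circ}$ are filtered. For the antipode, I would check by a direct computation at the algebraic level that $\cS_*\circ\zeta=\zeta\circ\cS_{\circ}$: evaluating both sides at a generator $\overline{\varphi\otimes_{T_M}m}$ and applying them to an element $u\in\cU$, the left-hand side yields $\varepsilon\bigl(\varphi(mu_-)u_+\bigr)$ by \eqref{Eq:tantip}, while the right-hand side gives $\mathrm{ev}_m(\varphi\centerdot u)=(\varphi\centerdot u)(m)=\varepsilon\bigl(\varphi(mu_-)u_+\bigr)$ by \eqref{Eq:star}; again the statement propagates to $\what\zeta$ by continuity.

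The main obstacle is the comultiplication, where $\Delta_{\circ}$ lands in the algebraic tensor product $\cU^{\bcirc}\tensor{A}\cU^{\bcirc}$ while $\Delta_*$ lands in the topological tensor product $\cU^*\cmptens{A}\cU^*$. The cleanest approach is to use the characterisation \eqref{Eq:fuv}: it suffices to prove that for any $\overline{\varphi\otimes_{T_M}m}\in\cU^{\bcirc}$ and any $u,v\in\cU$,
\begin{equation*}
\varphi\bigl(m(uv)\bigr)\;=\;\sum_{i=1}^{r}\zeta\bigl(\overline{\varphi\tensor{T_M}e_i}\bigr)\Bigl(\zeta\bigl(\overline{e_i^{*}\tensor{T_M}m}\bigr)(u)\cdot v\Bigr),
\end{equation*}
where $\{e_i,e_i^{*}\}$ is a dual basis of $M_{\Sscript{A}}$. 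Unwinding the definitions this becomes $\varphi((mu)v)=\sum_i \varphi\bigl(e_i\centerdot(e_i^{*}(mu)\cdot v)\bigr)$, which follows from the dual basis identity $mu=\sum_i e_i\centerdot e_i^{*}(mu)$ together with the right $A$-linearity of the action. Because this identity computes both sides of the putative equality when paired against an arbitrary element of $\cU\tensor{A}\cU$, commutativity of the comultiplication square $\Delta_*\circ\zeta=\what{\phi_{\cU,\cU}}\circ(\zeta\tensor{A}\zeta)\circ\Delta_{\circ}$ follows; continuity then extends it to $\Delta_*\circ\what{\zeta}=(\what\zeta\cmptens{A}\what\zeta)\circ\what{\Delta_{\circ}}$ on all of $\what{\cU^{\bcirc}}$.

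Finally, the particular case $\cU=\cV_{\Sscript{A}}(L)$ with $L_{\Sscript{A}}$ finitely generated and projective is obtained by invoking Example \ref{exm:ULAdmiss}, which exhibits an admissible filtration on $\cV_{\Sscript{A}}(L)$, and Example \ref{exam:SUL}, which shows that the translation map $\delta$ of $\cV_{\Sscript{A}}(L)$ is a filtered algebra map; once both hypotheses of the general statement are in force, Proposition \ref{prop:Bosco} endows $(A,\cU^*)$ with the complete Hopf algebroid structure needed to make sense of the conclusion, and the previous paragraphs do the rest.
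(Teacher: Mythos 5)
Your proposal is correct and takes essentially the same route as the paper's proof: completing the filtered algebra map $\zeta$ via Proposition \ref{prop:App} and the completion functor, then checking compatibility with counit and antipode by direct evaluation on generators $\overline{\varphi\tensor{T_M}m}$, and handling the comultiplication through the characterisation \eqref{Eq:fuv} together with the dual basis identity. The only cosmetic difference is the order in which the structure maps are treated.
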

\begin{proof}
In Proposition \ref{prop:App} we showed that $\zeta$ is a filtered algebra map. Thus, by applying the completion 2-functor of Theorem \ref{thm:Athm}  to $\zeta$ ($A$ is trivially filtered), we obtain that  $\what{\zeta}$ is a continuous morphism of complete algebras. 
Now, since we already know that $\varepsilon_*\circ\zeta =\varepsilon_\circ$ and in view of Lemma \ref{lema:zeta}, we are left to show that $\what{\zeta}$ is compatible with the comultiplications and the antipodes. That is, the following relations hold
$$\left(~\what{\zeta}\cmptens{A}\what{\zeta}~\right) \circ \what{\Delta_{\circ}}=\Delta_* \circ \what{\zeta}\quad \text{and} \quad \what{\zeta} \circ \what{\cS_\circ} = \cS_*\circ \what{\zeta}.$$
However, notice that to this aim it will be enough to show the following ones
$$\gamma_{\cU^*\tensor{A}\cU^*} \circ (\zeta\tensor{A}\zeta) \circ \Delta_\circ=\Delta_* \circ \zeta\quad \text{and} \quad \zeta \circ \cS_\circ = \cS_*\circ \zeta.$$
Hence, let us consider an element of the form $\overline{\varphi \tensor{T_M} m}\in \cU^\bcirc$. So we obtain an element in $\what{\cU^*\tensor{A}\cU^*}=\cU^*\cmptens{A}\cU^*$ given by
\begin{equation*}
(\gamma_{\cU^*\tensor{A}\cU^*}(\zeta\tensor{A}\zeta)\Delta_\circ)\left(\overline{\varphi \tensor{T_M} m}\right)=\what{\left(\sum_{i} \zeta \left( \overline{\varphi \tensor{T_M} e_i} \right) \tensor{A} \zeta \left( \overline{e_i^* \tensor{T_M} m} \right) \right)}=\limn\left(\sum_{i} \zeta \left( \overline{\varphi \tensor{T_M} e_i} \right) \tensor{A} \zeta \left( \overline{e_i^* \tensor{T_M} m} \right) \right).
\end{equation*}
For every $u,v\in\cU$, it satisfies
\begin{equation*}
\limn\left(\sum_{i} \zeta \left( \overline{\varphi \tensor{T_M} e_i} \right) \left( \zeta \left( \overline{e_i^* \tensor{T_M} m} \right)(u)v \right)\right) = \limn\left(\sum_{i} \varphi\left( e_ie_i^*(mu)v \right)\right)=\varphi(m(uv)) = \zeta\left(\overline{\varphi \tensor{T_M} m}\right)(uv)
\end{equation*}
whence, by the criterion of equation \eqref{Eq:fuv}, we have that $\gamma_{\cU^*\tensor{A}\cU^*} \circ (\zeta\tensor{A}\zeta) \circ \Delta_\circ=\Delta_* \circ \zeta$. Moreover,
\begin{equation*}
\left(\zeta\cS_\circ\left(\overline{\varphi \tensor{T_{M}} m}\right)\right)(u)=\zeta\left(\overline{\mathrm{ev}_m \tensor{T_{M^*}} \varphi}\right)(u)=\left(\varphi \centerdot u \right)(m)\stackrel{\eqref{Eq:star}}{=}\varepsilon_*\left(\varphi \left(m u_-\right)u_+\right)\stackrel{\eqref{Eq:tantip}}{=}\cS_*\left(\zeta\left(\overline{\varphi \tensor{T_{M}} m}\right)\right)(u),
\end{equation*}
for every $u \in \cU$, and the proof is complete.
\end{proof}

As in Example \ref{exam:Sacarrelli}, we are going to consider the $A$-bimodule $A\tensor{} A$ to be endowed with the $K$-adic filtration given by $K:=\ker{\mu_{A}:A\tensor{} A\to A}$, even if $A$ itself is trivially filtered.

\begin{proposition}\label{prop:zeroneveropen}
Let $(A,\cU)$ and $(A,\cU^{\bcirc})$ be as in Proposition \ref{prop:App} and assume that $\zeta:\cU^{\bcirc}\to\cU^{*}$ is injective. Then the following assertions are equivalent
\begin{enumerate}[label=(\alph*)]
\item the morphism $\what{\zeta}: \what{\cU^{\bcirc}} \to \cU^*$ is a filtered isomorphism,\label{list:1}
\item the morphism $\gr\left(\,\what{\zeta}\,\right):\gr{\left(\what{\cU^{\bcirc}}\right)}\to \gr{\left(\cU^{*}\right)}$ is a graded isomorphism,\label{list:2}
\item the morphism $\what{\zeta}$ is surjective and the $\cK$-adic filtration on $\cU^{\bcirc}$ coincides with the one induced from $\cU^{*}$ via $\zeta$,\label{list:3}
\item the graded morphism $\gr\left(\,\what{\zeta}\,\right):\gr{\left(\what{\cU^{\bcirc}}\right)} \to \gr{\left(\cU^{*}\right)}$ is surjective and the $\cK$-adic filtration on $\cU^{\bcirc}$ coincides with the one induced from $\cU^{*}$ via $\zeta$,\label{list:4}
\item the graded morphism $\gr\left({\zeta}\right):\gr{\left({\cU^{\bcirc}}\right)} \to \gr{\left(\cU^{*}\right)}$ is surjective and the $\cK$-adic filtration on $\cU^{\bcirc}$ coincides with the one induced from $\cU^{*}$ via $\zeta$,\label{list:5}
\end{enumerate}
Moreover, the following assertions are equivalent as well
\begin{enumerate}[resume*]
\item the morphism $\what{\zeta}: \what{\cU^{\bcirc}} \to \cU^*$ is an homeomorphism, \label{list:6}
\item the morphism $\what{\zeta}: \what{\cU^{\bcirc}} \to \cU^*$ is open and injective and $\cU^\bcirc$ is dense in $\cU^*$, \label{list:7}
\item the $\cK$-adic topology on $\cU^{\bcirc}$ is equivalent to the one induced from $\cU^{*}$ via $\zeta$ and $\cU^\bcirc$ is dense in $\cU^*$.\label{list:8}
\end{enumerate}
If in addition the morphism $\what{\vartheta}$ induced by the algebra map $\vartheta:A\otimes A\to\cU^*$ of equation \eqref{Eq:vartheta} is a filtered isomorphism (as in the example mentioned in the introduction), then all the assertions from \ref{list:1} to \ref{list:8} are equivalent.
\end{proposition}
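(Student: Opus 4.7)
The strategy is to organise the eight conditions into two blocks that I handle separately, and then to bridge them via the factorisation $\vartheta=\zeta\circ\etaup$. Throughout, write $G_n\cU^{\bcirc}:=\zeta^{-1}(F_n\cU^{*})$ for the filtration induced on $\cU^{\bcirc}$ through $\zeta$, so that $F_n\cU^{\bcirc}=\cK^n\subseteq G_n\cU^{\bcirc}$ by Proposition~\ref{prop:App}. A straightforward induction shows that $F_n\cU^{\bcirc}=G_n\cU^{\bcirc}$ for every $n\geq 0$ if and only if $\gr(\zeta)$ is injective. Moreover, since $F_n\cU^{\bcirc}/F_{n+1}\cU^{\bcirc}\cong F_n\what{\cU^{\bcirc}}/F_{n+1}\what{\cU^{\bcirc}}$ canonically, one has $\gr(\zeta)=\gr(\what{\zeta})$, so graded statements on the two maps are interchangeable.

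With these preliminaries, the equivalence of \ref{list:1}--\ref{list:5} reduces to the classical theorem according to which a filtered morphism between complete Hausdorff filtered modules is a filtered isomorphism (respectively, is surjective) if and only if its associated graded is a graded isomorphism (respectively, is surjective). Both $\what{\cU^{\bcirc}}$ and $\cU^{*}$ are complete and Hausdorff (for the latter, $\bigcap_n F_n\cU^{*}=\bigcap_n\ann{F^{n-1}\cU}=\ann{\cU}=0$ in conjunction with Lemma~\ref{lemma:U*complete}), so this theorem applies to $\what{\zeta}$. I get \ref{list:1}$\Leftrightarrow$\ref{list:2} directly from the isomorphism version; \ref{list:4}$\Leftrightarrow$\ref{list:5} from the identification $\gr(\zeta)=\gr(\what{\zeta})$; \ref{list:2}$\Leftrightarrow$\ref{list:4} by splitting a graded iso into ``graded surjective'' plus ``graded injective'' (the latter being precisely the characterisation of ``filtrations agree'' recorded in the setup); and \ref{list:3}$\Leftrightarrow$\ref{list:4} from the surjectivity version of the same theorem applied to $\what{\zeta}$.

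For \ref{list:6}--\ref{list:8} I appeal instead to the universal property of completion. The implication \ref{list:6}$\Rightarrow$\ref{list:7} is automatic, since a homeomorphism is open, injective, and transports the dense subset $\cU^{\bcirc}\subseteq\what{\cU^{\bcirc}}$ to the dense subset $\zeta(\cU^{\bcirc})\subseteq\cU^{*}$. For \ref{list:7}$\Rightarrow$\ref{list:6}, the image of $\what{\zeta}$ is an open subgroup of $\cU^{*}$, hence also closed; containing a dense subset, it must coincide with $\cU^{*}$, so $\what{\zeta}$ is an open continuous bijection, i.e.~a homeomorphism. Under \ref{list:8}, $\zeta$ becomes a dense topological embedding of the Hausdorff space $\cU^{\bcirc}$ into the complete Hausdorff space $\cU^{*}$, so $\cU^{*}$ realises the completion of $\cU^{\bcirc}$ and the induced continuous extension $\what{\zeta}$ is a homeomorphism. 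The converse is obtained by restricting a homeomorphism $\what{\zeta}$ to the dense subspace $\cU^{\bcirc}$.

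To bridge the two blocks under the extra hypothesis, I exploit the factorisation $\vartheta=\zeta\circ\etaup$, which I verify by unravelling the definitions of $\zeta$ and $\etaup$ in Section~\ref{ssec:Fdual}: for all $a,a'\in A$ and $u\in\cU$,
\begin{equation*}
\zeta\bigl(\etaup(a\otimes a')\bigr)(u)=\zeta\bigl(\overline{l_a\tensor{\K}a'}\bigr)(u)=l_a\bigl(\varepsilon(a'u)\bigr)=a\varepsilon(a'u)=\vartheta(a\otimes a')(u).
\end{equation*}
This factorisation transports to the completions as $\what{\vartheta}=\what{\zeta}\circ\what{\etaup}$, and since $\etaup$ is filtered (it sends $K=\ker{\mu_A}$ into $\cK$ and is multiplicative) one has $\what{\etaup}(F_n\what{A\otimes A})\subseteq F_n\what{\cU^{\bcirc}}$. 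Assuming $\what{\vartheta}$ is a filtered isomorphism, the equality $F_n\cU^{*}=\what{\vartheta}(F_n\what{A\otimes A})$ thus forces $\what{\zeta}(F_n\what{\cU^{\bcirc}})\supseteq F_n\cU^{*}$; combining this with the automatic reverse inclusion and with the bijectivity provided by \ref{list:6}, one gets $\what{\zeta}(F_n\what{\cU^{\bcirc}})=F_n\cU^{*}$ for every $n$, which is \ref{list:1}. Thus \ref{list:6}$\Rightarrow$\ref{list:1} closes the loop, making \ref{list:1}--\ref{list:8} all equivalent. The main obstacle along the way is the classical filtered/graded isomorphism theorem invoked in the first block; once that is available, the rest reduces to routine bookkeeping of filtrations plus an application of the universal property of completion.
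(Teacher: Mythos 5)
Your proposal is correct. For the two outer pieces --- the equivalence of \ref{list:1}--\ref{list:5} via the classical filtered/graded machinery, and the final bridge via the factorisation $\vartheta=\zeta\circ\etaup$ --- you follow essentially the same route as the paper, which cites \cite[Cor.~D.III.5--7]{NasOys} for the first block and phrases the bridge as ``$\what{\zeta}$ admits the filtered section $\what{\etaup}\circ\what{\vartheta}^{-1}$'', i.e.\ your argument about images of filtration terms in disguise. Where you genuinely diverge is in the block \ref{list:6}--\ref{list:8}: the paper proves \ref{list:7}$\Leftrightarrow$\ref{list:8} by an explicit element-level computation inside $\what{\cU^{\bcirc}}=\varprojlim\, \cU^{\bcirc}/\cK^n$ (checking by hand that $\what{\zeta}$ is open and injective), whereas you use two soft topological facts: an open subgroup of a linearly topologized group is closed, hence a dense open image forces surjectivity; and a complete Hausdorff module containing a dense copy of $\cU^{\bcirc}$ carrying an equivalent topology realises the completion, so the unique continuous extension $\what{\zeta}$ is a homeomorphism. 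Your version is shorter and more conceptual, at the price of invoking uniqueness of completions rather than exhibiting the relevant inclusions of filtration terms. One small imprecision to repair: the ``classical theorem'' as you state it (``$f$ is surjective iff $\gr(f)$ is surjective'' between complete Hausdorff modules) is false in that generality --- surjectivity of a filtered map does not imply surjectivity of its associated graded without strictness (shift the filtration on the target of an identity map for a counterexample). This does no harm in your proof, because the only place you use that direction is \ref{list:3}$\Rightarrow$\ref{list:4}, where strictness of $\zeta$ is part of the hypothesis and a strict surjection does have surjective associated graded; but the auxiliary statement should carry that caveat.
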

\begin{proof}
Before proceeding with the proof, there are some facts that have to be highlighted or recalled. First of all, notice that injectivity of $\zeta$ implies that the filtration on $\cU^\bcirc$ is separated. Secondly, recall that a morphism of filtered bimodules $f:M\to N$ is said to be strict if $f(F_kM)=f(M)\cap F_kN$ for all $k\geq 0$. In particular, $\zeta$ is strict if and only if the $\cK$-adic filtration on $\cU^\bcirc$ coincides with the one induced from $\cU^*$ via $\zeta$ itself. Thirdly, a filtered morphism (as $\what{\zeta}$ for example) is a filtered isomorphism if and only if it is bijective and strict. Finally, we have that $\gr\left(\gamma_{\Sscript{\cU^\bcirc}}\right):\gr\left(\cU^\bcirc\right)\to\gr\left(\what{\cU^\bcirc}\right)$ is always an isomorphism (see e.g.~\cite[Proposition D.3.1]{NasOys}), so that $\gr\left(\,\what{\zeta}\,\right)$ is injective (resp.~surjective, bijective) if and only if $\gr(\zeta)$ is. 
Now, by applying \cite[Cor. D.III.5, D.III.6 and D.III.7]{NasOys} one proves that \ref{list:3} $\Leftrightarrow$ \ref{list:1} $\Leftrightarrow$ \ref{list:2} $\Leftrightarrow$ \ref{list:4} $\Leftrightarrow$ \ref{list:5}.

For the remaining equivalent facts, notice that $\what{\zeta}$ is surjective if and only if for every $x\in\cU^*$ and for all $k\geq 0$, there exists $m_k\in \cU^\bcirc$ such that $x-m_k\in F_k\cU^*$ or, equivalently, if and only if $\cU^\bcirc$ is dense in $\cU^*$. This proves the equivalence between \ref{list:6} and \ref{list:7}, so that we may focus on \ref{list:7} $\Leftrightarrow$ \ref{list:8}. Assume initially that $\what{\zeta}$ is an open and injective map. From this it follows that for all $h\geq 0$, $F_h\what{\cU^\bcirc}$ is open in $\cU^*$. In particular, there exists $k\geq 0$ such that $F_k\cU^*\subseteq F_h\what{\cU^\bcirc}$. Thus, $M\cap F_k\cU^*\subseteq M\cap F_h\what{\cU^\bcirc}=F_h\cU^\bcirc$, which expresses the fact that the $\cK$-adic topology is equivalent to the induced one. Conversely, assume that these two topologies are equivalent and that $\cU^\bcirc$ is dense in $\cU^*$ (that is, that $\what{\zeta}$ is surjective). We plan to prove first that every $F_t\what{\cU^\bcirc}$ is open in $\cU^*$ (which implies that $\what{\zeta}$ is open) and then that $\what{\zeta}$ is injective. To this aim, pick $t\geq 0$ and consider $k$ (which we may assume greater or equal than $t$) such that $\cU^\bcirc\cap F_k\cU^*\subseteq F_t{\cU^\bcirc}$. Then every $y\in F_k\cU^*$ is of the form $y=\what{\zeta}\left(\left(m_i+F_i\cU^\bcirc\right)_{\Sscript{i\geq 0}}\right)=\left(m_i+F_i\cU^*\right)_{\Sscript{i\geq 0}}$ for some $\left(m_i+F_i\cU^\bcirc\right)_{\Sscript{i\geq 0}}\in\what{\cU^\bcirc}$ such that $m_k\in F_k\cU^*\cap \cU^\bcirc\subseteq F_t\cU^\bcirc$, whence 
$$
m_t+F_t\cU^\bcirc = m_k+F_t\cU^\bcirc = 0
$$
in the quotient $\cU^\bcirc/F_t\cU^\bcirc$ and so $\left(m_i+F_i\cU^\bcirc\right)_{\Sscript{i\geq 0}}\in F_t\what{\cU^\bcirc}$. Summing up, we showed that for every $t\geq 0$, there exists a $k\geq t$ such that $F_k\cU^*\subseteq F_t\what{\cU^\bcirc}$ and hence that $\what{\zeta}$ is an open map. Let us show now that it is injective as well. To this aim, let $\left(m_i+F_i\cU^\bcirc\right)_{\Sscript{i\geq 0}}$ be an element in $\ker{\,\what{\zeta}\,}$. This implies that $m_k\in F_k\cU^*\cap {\cU^\bcirc}$ for all $k\geq 0$ and that, since the two topologies are equivalent, for every $i\geq 0$ there exists $j_i\geq i$ such that $F_j\cU^*\cap \cU^\bcirc\subseteq F_i\cU^\bcirc$, whence
$$
m_k+F_k\cU^\bcirc = m_{j_k}+F_k\cU^\bcirc \in \left(F_{j_k}\cU^*\cap \cU^\bcirc\right)+F_k\cU^\bcirc=F_k\cU^\bcirc,
$$
so that $\left(m_i+F_i\cU^\bcirc\right)_{\Sscript{i\geq 0}}=0$. With this we conclude the proof that \ref{list:6} $\Leftrightarrow$ \ref{list:7} $\Leftrightarrow$ \ref{list:8}.

Finally, \ref{list:1} clearly implies \ref{list:6} and since $\zeta\circ \eta=\vartheta$, if $\what{\vartheta}$ is a filtered isomorphism then $\what{\zeta}$ admits the filtered section $\what{\etaup}\circ \what{\vartheta}^{-1}$. Therefore, if in such a case $\what{\zeta}$ is also injective, then it is a filtered isomorphism.
\end{proof}

The subsequent Corollary gives another condition for the injectivity of the map $\zeta$, and so another application of the result  \cite[Theorem 4.2.2]{LaiachiGomez}.
\begin{corollary}\label{coro:Equivalence}
Let $(A,L)$ be a Lie-Rinehart algebra and consider $\cU=\cV_{\Sscript{A}}(L)$ its universal enveloping Hopf algebroid. Assume  that $\cU^{\circ}$ is an Hausdorff topological space with respect to the $\cK$-adic topology and that  $\what{\zeta}$ is an homeomorphism. Then $\zeta$ is injective, and therefore, there is an equivalence of symmetric rigid monoidal categories between the category of right  $L$-modules  and the category of right $\cU^{\circ}$-comodules, with finitely generated and projective underlying $A$-module structure.
\end{corollary}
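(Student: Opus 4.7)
The plan is to establish the injectivity of $\zeta$ first and then to invoke the reconstruction result \cite[Theorem 4.2.2]{LaiachiGomez} in order to produce the categorical equivalence. For the injectivity part, the key observation is that $\zeta$ factors through the completion map, namely $\zeta = \what{\zeta}\circ \gamma_{\cU^{\bcirc}}$ (this is part of the commutative triangle in Theorem \ref{thm:triangle}). The kernel of the canonical map $\gamma_{\cU^{\bcirc}}:\cU^{\bcirc}\to \what{\cU^{\bcirc}}$ is exactly the intersection $\bigcap_{n\geq 0}F_n\cU^{\bcirc}$, which vanishes precisely because the $\cK$-adic topology on $\cU^{\bcirc}$ is Hausdorff by hypothesis. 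Since $\what{\zeta}$ is a homeomorphism (and in particular a bijection), the composite $\zeta$ is injective.

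Once $\zeta$ is known to be injective, I would appeal directly to \cite[Theorem 4.2.2]{LaiachiGomez}, which guarantees that injectivity of the canonical algebra map from the finite dual to the convolution algebra implies that the evaluation/coaction induces an equivalence of symmetric rigid monoidal categories between $\cat{A}_{\cU}$ (right $\cU$-modules whose underlying right $A$-module is finitely generated and projective) and the category of right $\cU^{\bcirc}$-comodules satisfying the same finiteness condition. To conclude, I would invoke the universal property of the universal enveloping Hopf algebroid $\cU = \cV_{\Sscript{A}}(L)$ recalled in \S\ref{ssec:ULA}: an $A$-module structure together with a right $L$-module structure (i.e.~an $A$-linear Lie algebra map $L \to \End{\K}{M}$ satisfying the Lie-Rinehart compatibility with the anchor) extends uniquely to a right $\cU$-action, so that the category of right $L$-modules with finitely generated projective underlying $A$-module structure is canonically identified with $\cat{A}_{\cU}$.

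The main technical content sits in the injectivity argument and in the invocation of \cite[Theorem 4.2.2]{LaiachiGomez}; there is no real obstacle beyond the bookkeeping needed to check that the symmetric monoidal and rigid structures on $\cat{A}_{\cU}$ described in \S\ref{ssec:Fdual} match, under the identification of $L$-modules with $\cU$-modules, the natural ones on the category of $L$-modules described in \S\ref{ssec:muchocaldo}. This compatibility is however automatic, since both tensor products and duals are computed on the underlying $A$-modules and the $L$-actions (resp.~$\cU$-actions) on tensor products and duals transport one into the other through the comultiplication and the translation map of $\cU$ used to define the $\cU$-action on $M^{*}$ in equation \eqref{Eq:star}.
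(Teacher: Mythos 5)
Your argument is correct and is essentially the proof the paper intends: the paper presents this corollary as an immediate consequence of the factorization $\zeta=\what{\zeta}\circ\gamma_{\Sscript{\cU^{\bcirc}}}$ from Theorem \ref{thm:triangle} (with $\ker{\gamma_{\Sscript{\cU^{\bcirc}}}}=\bigcap_{n}\cK^{n}=0$ by the Hausdorff hypothesis and $\what{\zeta}$ injective as a homeomorphism), followed by an appeal to \cite[Theorem 4.2.2]{LaiachiGomez} and the identification of finitely generated projective right $L$-modules with $\cat{A}_{\Sscript{\cU}}$ via the universal property of $\cV_{\Sscript{A}}(L)$. Nothing further is needed.
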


\begin{remark}\label{rem:laventanadelfrente}
As a final remark, we point out that the completion of $\zeta$ might fail to be an homeomorphism, even if $\zeta$ is injective and $A$ is the base field, as it is shown in \cite{LaiachiPaolo} for an apparently trivial example: namely the enveloping Hopf algebra $\cU=U(L)$ of the one dimensional complex Lie algebra  $L$.
Nevertheless, we believe that in the Hopf algebroid framework some unexpected result may show up. For instance, we just mention that the classical Sweedler dual coalgebra $\cU^{o}$ of the first Weyl algebra $\cU$ as in Example \ref{exam:URSO} is zero, while the finite dual Hopf algebroid $\cU^{\bcirc}$ is not.
This, in our opinion, suggests that the problem of $\what{\zeta}$ being an homeomorphism  or not for universal enveloping Hopf algebroids is still worthy to be studied. 

In fact, we believe that the presence of a non-trivial algebra of infinite jets $\cJ(A)=\what{A\otimes A}$ (see Example \ref{exam:Sacarrelli} for the definition) could make the difference. Let us consider again the diagram \eqref{eq:maindiagram} for $(A,\cU)$ a co-commutative Hopf algebroid with an admissible filtration such that the translation map $\delta$ is filtered,
\begin{equation*}
\xymatrix@R=15pt@C=30pt{ \what{\cU^{\bcirc}} \ar@{->}^-{\what{\zeta}}[rr]   &  & {\cU}^{*}  \\ & \cJ(A) \ar@{->}_-{\what{\vartheta}}[ru]  \ar@{->}^-{\what{\etaup}}[lu] & }
\end{equation*}
In case $\what{\vartheta}$ turns out to be an isomorphism of complete Hopf algebroids (as for example when $A=\C[X]$ and $\cU=\diff(A)$), then one may reasonably conjecture that $\what{\zeta}$ could become an isomorphism as well.

If we look instead at the aforemenioned case, that is to say, $A=\C$ and $L=\C X$, then $\cJ(\C)=\what{\C\otimes\C}\cong \C\otimes\C\cong \C$ and hence the completion $\what{\vartheta}$ of the algebra map $\vartheta\colon\C\otimes\C\to U(L)$ corresponds to the unit $\C\to \C[[X]]$, which is far away from being an isomorphism.
\end{remark}


\appendix

\section{Complete bimodules and the completion 2-functor}\label{sec:CBCF}

In this section we revise some notions on linear topology of rings and modules which are well-known or folklore, apart perhaps from the adjunction between the topological tensor product and the continuous hom functor. For a more exhaustive treatment of the material of this section, we refer to \cite{MR0163908,MR0358652,MR1420862,NasOys}. The reason that pushed us to put this material in a comprehensive way was the apparent lack of a single reference in the literature which could clarify in an exhaustive way  the constructions performed for complete Hopf algebroids in Subsection \ref{sec:CHA}.
We decided then to include a detailed exposition, especially for readers who are not familiar with this context.

\subsection{Filtered bimodules over filtered algebras}\label{ssec:CBFA}
Here we retrieve some basic notions and results in order to make explicit our assumptions and fix some notations. For further details on this subsection, we refer to \cite[Chapter I, \S\S1-3]{MR1420862} and \cite[Chapter I-III]{MR0358652}.

As far as we will be concerned with this, a \emph{linear topology} on an algebraic structure is a topology on the underlying set with respect to which all structure maps are continuous. An algebra $R$ is said to be \emph{filtered} if there exists a decreasing chain of two-sided ideals
\begin{equation*}
R=F_{0}R\supseteq F_{1}R\supseteq \cdots
\end{equation*}
that satisfies $F_{n}R\cdot F_{m}R\subseteq F_{n+m}R$ for every $m,n\in \N$. We will denote it as a pair $\left(
R,F_{n}R\right) $ or we will just say that $R$ is filtered. Given a filtration on $R$, this induces a linear topology on it such that $\left\{ F_{n}R\mid n\in \N\right\} $ is a fundamental system of neighborhoods of $0$
and $\left\{x+F_{n}R\mid n\in \N\right\} $ is a fundamental system of neighborhoods of $x\in R$. A subset $U$ is open in $R$ if and only if for every $x\in U$ there exists an $n\in \N$ such that $x+F_{n}R\subseteq U$, while a subset $V$ is closed if and only if $V=\cap_{n \geq 0}\big(  V+ F_{n}R\big)$. This, in particular, implies that $\left\{ x+F_{n}R\mid x\in R,n\in \N \right\} $ is a basis for this topology. Furthermore, by \cite[III.49, \S\ 6.3]{MR0358652}, this topology is compatible with the ring structure on $R$ (cf. also Example 3 in the same page).

\begin{remark}\label{rem:K}
We will always endow the base ring $\K$ with the trivial filtration $F_0(\K)=\K$ and $F_n(\K)=0$ for every $n\geq 0$. This filtration induces on $\K$ the discrete topology because $\left\{0\right\}$ is open by definition, whence every point is open. This topology is always compatible with all algebraic structures on $\K$ (even
\begin{equation*}
\sfun{(-)^{-1}}{\K^*}{\K^*}{k}{k^{-1}}
\end{equation*}
in case $\K$ is a field, cf. e.g. \cite[III.55, \S\ 6.7, Example 1]{MR0358652}).
\end{remark}

In view of Remark \ref{rem:K} and of \cite[III.53, \S\ 6.6, Remarque]{MR0358652}, the linear topology on $R$ is compatible with the module structure, too.  In other words, $R$ is a topological algebra.

Let $R$, $S$ be filtered algebras. An $\left( S,R\right) $-bimodule $M$ is said to be \emph{filtered} (as a bimodule) if there exists a decreasing chain of submodules
\begin{equation*}
M=F_{0}^{S,R}M\supseteq F_{1}^{S,R}M\supseteq \cdots
\end{equation*}
that satisfies
\begin{equation*}
F_{n}S\cdot F_{m}^{S,R}M\subseteq F_{n+m}^{S,R}M\text{\qquad and\qquad }F_{n}^{S,R}M\cdot F_{m}R\subseteq F_{n+m}^{S,R}M
\end{equation*}
for every $m,n\in \N$.  We will denote it as a pair $\left(M,F_{n}^{S,R}M\right) $ or we will just say that $M$ is filtered. Note that each $F_{n}^{S,R}M$ is in particular an $\left( S,R\right) $-subbimodule. If $M$ is a filtered $\left( S,R\right) $-bimodule then it can be endowed with a linear topology such that the given filtration forms a fundamental system of neighborhoods of 0. As above, a basis for this topology is given by the open sets $\left\{m+F_{n}^{S,R}M\mid m\in M,n\in \N\right\} $. For the sake of simplicity, the filtration on a $(S,R)$-bimodule $M$ will be denoted by $\left\{F_nM\mid n\in \N\right\}$.

A filtration $\left\{F_nM\mid n\in\N\right\}$ on an $(S,R)$-bimodule $M$ is said to be \emph{finer} than another filtration $\left\{G_nM\right\}$ on $M$  if and only if for every $n\in\N$ there exists an $m\in\N$ such that $F_mM\subseteq G_nM$ (cf. \cite[I.38, \S\ 6.3, Proposition 4]{MR0358652}). As a consequence, the linear topology induced by the filtration $\left\{F_nM\right\}$ is finer than the one induced by the filtration $\left\{G_nM\right\}$.
Two filtrations $\left\{F_nM\mid n\in\N\right\}$ and $\left\{G_nM\mid n\in\N\right\}$ on an $(S,R)$-bimodule $M$ are said to be \emph{equivalent} if and only if each one is finer than the other one. In particular, the linear topologies induced on $M$ are equivalent.

The category of filtered $\left( S,R\right) $-bimodules, denoted by $\FBim{S}{R}$, is defined as follows. The objects are filtered $\left(S,R\right) $-bimodules $M$. The arrows are $\left(S,R\right) $-bimodule maps $f:M\rightarrow N$ that satisfies $f\left(F_{n}M\right)\subseteq F_{n}N$,  for every $n\in \N$.\footnote{Such homomorphisms are called \emph{of degree $0$} in the literature. Cf. e.g. \cite[Definition D.I.5]{NasOys}.} It is a (co)complete additive category with kernels and cokernels.
If $(M_\lambda,f_{\lambda,\,\mu})$ is a projective system of filtered $(S,R)$-bimodules then its projective limit $\prlimit{\lambda}{M_\lambda}$ is filtered with filtration
\begin{equation}\label{eq:projFiltr}
F_k\left(\prlimit{\lambda}{M_\lambda}\right)=\prlimit{\lambda}{F_k\left(M_\lambda\right)}.
\end{equation}
We have a functor $\varfun{D}{\Bim{S}{R}}{\FBim{S}{R}}$ which associates to $M$ in $\Bim{S}{R}$ the $(S,R)$-bimodule $M$ itself with filtration
\begin{equation*}
F_nM:=\sum_{h+k=n}F_kS\cdot M\cdot F_hR
\end{equation*}
for all $n\in\N$. This filtration is called the \emph{induced} filtration. If $(M,F_nM)$ is a filtered $(S,R)$-bimodule, for $k\in\N$ the \emph{$k$-shifted module $M[k]$} is the same $(S,R)$-bimodule as $M$, but filtered with a different filtration given by $F_nM[k]=F_{n+k}M$.\footnote{In \cite[I.2.7]{MR1420862} the shifted module is denoted by $T(n)M$. Notice that here we consider only positively filtered modules with decreasing filtration.}

\begin{remark}\label{rem:continuity}
As every function from a discrete topological space to any topological space is continuous, every morphism from a trivially filtered bimodule to any bimodule is automatically filtered.

Moreover, independently from being filtered or not, an $(S,R)$-bimodule homomorphism $\varfun{f}{M}{N}$ is continuous with respect to the linear topologies induced by the filtrations if and only if for every $n\in \N$ there exists $m\left(n\right) \in \N$ such that $f\left( F_{m\left( n\right) }M\right) \subseteq F_{n}N$. In particular, any morphism of filtered $(S,R)$-bimodules is continuous (cf. also \cite{MR1320989}). On the other hand, given $M,N$ two filtered $(S,R)$-bimodules, one can prove that an $(S,R)$-bimodule homomorphism $\varfun{f}{M}{N}$ is continuous with respect to the linear topologies induced by the given filtrations if and only if there exists a sub-filtration on $M$ equivalent to the former one and with respect to which $f$ is filtered.
\end{remark}

In light of Remark \ref{rem:continuity}, we will distinguish \emph{homeomorphism} as topological spaces from \emph{filtered isomorphism} as filtered bimodules: the second terminology will be used for isomorphism of filtered bimodules whose inverse is also filtered. Note that every filtered isomorphism is in fact an homeomorphism.

Now, if $M$ and $N$ are filtered $(S,R)$ and $(R,T)$-bimodules respectively, then there is a natural filtration on their tensor product $M\tensor{R}N$ given by
\begin{equation}\label{eq:filtrations}
 \cF_n\left(M\tensor{R}N\right):=\sum_{p+q=n}\Img{F_{p}M\tensor{R}F_{q}N}
\end{equation}
for all $n\in\N$, where the notation in the right hand side is the obvious one. 
We will consider this one as the standard filtration on the tensor product of filtered $(S,R)$ and $(R,T)$-bimodules, for all algebras $S$, $R$, $T$.

\begin{proposition}
If $\varfun{f}{M}{M^{\prime}}$ and $\varfun{g}{N}{N^{\prime}}$ are morphisms of filtered $(S,R)$ and $(R,T)$-bimodules respectively, then $f\tensor{R}g : M\tensor{R}N \rightarrow M'\tensor{R}N'$ is a morphism of filtered  $(S,T)$-bimodules. In particular, we have a bicategory $\cB im_\K^{\mathsf{flt}}$ which has filtered algebras as $0$-cells and whose categories of $\{1,2\}$-cells are the categories of filtered bimodules over filtered algebras with vertical and horizontal compositions given by the composition of bilinear morphisms and the usual tensor product, filtered as in \eqref{eq:filtrations}, respectively.
\end{proposition}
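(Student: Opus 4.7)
My plan splits naturally into two parts: first showing that $f\tensor{R}g$ is filtered, and second verifying that the data assemble into a bicategory.

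For the first part, I would argue directly from the definition of the filtration \eqref{eq:filtrations}. By hypothesis, $f(F_pM)\subseteq F_pM'$ and $g(F_qN)\subseteq F_qN'$ for every $p,q\in\N$. Consequently, for any $p+q=n$,
\begin{equation*}
(f\tensor{R}g)\bigl(\Img{F_pM\tensor{R}F_qN}\bigr) \subseteq \Img{f(F_pM)\tensor{R}g(F_qN)} \subseteq \Img{F_pM'\tensor{R}F_qN'}.
\end{equation*}
Summing over $p+q=n$ gives $(f\tensor{R}g)\bigl(\cF_n(M\tensor{R}N)\bigr)\subseteq \cF_n(M'\tensor{R}N')$, which is exactly what it means for $f\tensor{R}g$ to be a morphism of filtered $(S,T)$-bimodules. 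The fact that $f\tensor{R}g$ is $(S,T)$-bilinear is inherited from the unfiltered bicategory $\cB im_\K$ and needs no new verification.

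For the second part, I would start from the known bicategory $\cB im_\K$ of bimodules over algebras and upgrade every piece of its structure to the filtered setting. The $0$-cells are filtered algebras $R$; the hom-category between two $0$-cells $R$ and $S$ is $\FBim{S}{R}$, which is indeed a category (composition of filtered morphisms is filtered and $\id$ is trivially filtered). Horizontal composition on $1$-cells is $\tensor{R}$ endowed with the filtration \eqref{eq:filtrations}; the part just proved guarantees that horizontal composition of $2$-cells lands in $\FBim{S}{T}$, and functoriality (preservation of identities and of vertical composition) is inherited from $\cB im_\K$.

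What remains is to check that the coherence data of $\cB im_\K$ lift to filtered isomorphisms. For the unitors $M\tensor{R}R\cong M$ and $R\tensor{R}N\cong N$: using that $F_0R=R$ one gets $\cF_n(M\tensor{R}R)\supseteq \Img{F_nM\tensor{R}R}\twoheadrightarrow F_nM$, while $F_pM\cdot F_qR\subseteq F_{p+q}M$ gives the reverse inclusion after passing through the isomorphism, so both the unitor and its inverse are filtered. For the associator $(M\tensor{R}N)\tensor{T}P\cong M\tensor{R}(N\tensor{T}P)$: both sides inherit, by a straightforward unraveling, the common filtration $\sum_{p+q+r=n}\Img{F_pM\tensor{R}F_qN\tensor{T}F_rP}$, so the associator and its inverse are filtered as well. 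The pentagon and triangle axioms then hold automatically because they already hold in $\cB im_\K$ and the underlying morphisms are unchanged. The only genuine content, and the single place where one must be a touch careful, is the verification that the canonical associator respects the filtrations on \emph{both} sides as computed iteratively; this is the main (though still routine) obstacle, and amounts to confirming that the two iterated filtrations coincide with the triple-indexed sum above, which follows from the distributivity of $\Img{-}$ over sums and from $F_pM\tensor{R}(-)$, $(-)\tensor{T}F_rP$ commuting with the formation of $\Img{-}$.
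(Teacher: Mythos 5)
Your proposal is correct: the paper states this proposition without proof, treating it as routine, and your verification (filteredness of $f\tensor{R}g$ via the generating images $\Img{F_pM\tensor{R}F_qN}$, plus the check that the unitors and associator of $\cB im_\K$ are filtered in both directions so that the coherence axioms lift for free) is exactly the standard argument the authors leave implicit. Nothing is missing.
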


Notice that the category $\rmod{\K}^{\mathsf{flt}}$ of filtered modules is monoidal with tensor product $\otimes$ and unit $\K$. Filtered algebras are monoids in $\rmod{\K}^{\mathsf{flt}}$ and filtered $(S,R)$-bimodules are objects in ${}_{\Sscript{S}}^{}(\rmod{\K}^{\mathsf{flt}})^{}_{\Sscript{R}}$. This in particular, means that the categories of $\{1,2\}$-cells ${}_{\Sscript{S}}(\cB im_\K^{\mathsf{flt}}){}_{\Sscript{R}}$ of $\cB im_\K^{\mathsf{flt}}$  are exactly  $ {}_{\Sscript{S}}^{}(\rmod{\K}^{\mathsf{flt}})^{}_{\Sscript{R}}$.

\begin{remark}
Similar to the discrete case (i.e., usual bimodules), we have an isomorphism between the category $\FBim{S}{R}$ of filtered $(S,R)$-bimodules and the category ${_{S\otimes R^{\Sscript{\Sf{op}}}}\Sf{Mod}^{\Sf{flt}}}$ of filtered $S\otimes R^{\Sscript{\text{op}}}$-modules, where $R^{\Sscript{\Sf{op}}}$ denotes the opposite algebra of $R$ and $S\otimes R^{\Sscript{\text{op}}}$ is a filtered algebra with filtration as in \eqref{eq:filtrations}.
\end{remark}

\begin{example}\label{ex:HomFiltr}
Let $P\in \FBim{S}{T}$ and $N\in \FBim{R}{T}$ and let us denote by $\FHom{-,T}{N}{P}$ the abelian group of filtered morphisms $\varfun{f}{N}{P}$ which are $T$-linear. As one can expect it is an object in $\Bim{S}{R}$. It is also filtered with filtration given by
\begin{equation}\label{eq:FiltrHom}
F_n\FHom{-,T}{N}{P}=\FHom{-,T}{N}{P[n]}=\Big\{f\in \hom{-,T}{N}{P}\mid f\left(F_kN\right)\subseteq F_{n+k}P\text{ for all }k\geq0\Big\}.
\end{equation}
Observe that this is the filtration induced by the filtered bimodule of all homomorphisms of finite degree $\mathsf{HOM}_{-,T}\left(N,P\right)$ onto its subgroup $F_0\mathsf{HOM}_{-,T}\left(N,P\right)=\FHom{-,T}{N}{P}$ (see e.g. \cite[I.2.5]{MR1420862}).
\end{example}

It is worthy to mention that the linear topology makes of $M$ a linearly topologized $\left( S,R\right) $-bimodule in the sense of \cite[Definition 1.1]{MR1608699}, whence  it endows  $\CEnd{-R}{M}$ with the topology of uniform convergence on $M$.

\subsection{The completion 2-functor}\label{ssec:CF}
In this subsection we will recall the construction of the completion functor from the category of filtered bimodules to the one of complete bimodules. As a main reference for the material presented here, we suggest \cite[Chap. D, \S\S\ I-II]{NasOys} and \cite[Chap. I, \S3]{MR1420862}.

Let $S$, $R$ be filtered algebras and let $\left( M, F_{n}M\right) $ be a filtered $(S,R)$-bimodule. We recall that $M$ is \emph{Hausdorff} (or \emph{separable}) if and only if for every pair of elements $x,y\in M$ there exist two open sets $U,V\subseteq M$ such that $x\in U$, $y\in V$ and $U\cap V=\emptyset$. However, by definition of the linear topology on $M$, this is equivalent to say that $\bigcap_{n\in\N}F_nM=0$. Moreover, a sequence $\left\{m_k \mid k\geq0\right\}$ in a Hausdorff filtered $(S,R)$-bimodule $M$ is a \emph{Cauchy sequence} if and only if for every $p\in \N$, there exists $q\in \N$ such that for all $k,h\geq q$ we have that $m_k-m_h\in F_p(M)$. It is \emph{convergent} to an element $m\in M$ if and only if for every $p\in \N$, there exists $q\in \N$ such that for all $k\geq q$ we have that $m-m_k\in F_p(M)$. The bimodule $M$ is said to be \emph{complete} with respect to the linear topology induced by the filtration if and only if every Cauchy sequence is convergent.

Now, the filtration on $M$ gives rise to a projective system of $(S,R)$-bimodules given by
\begin{equation}\label{eq:filtprojsyst}
\lfun{\pi _{m,n}^{M}}{\frac{M}{F_{m}M}}{\frac{M}{F_{n}M}}{x+F_{m}M}{ x+F_{n}M}
\end{equation}
for all $m\geq n$ and this allows us to give an effective characterization of when a filtered bimodule is Hausdorff and complete, as well as a universal construction of its Hausdorff completion. To this aim, set
\begin{equation*}
\what{M}:=\prlimit{n}{ \frac{M}{F_{n}M}}
\end{equation*}
and consider the canonical morphism $\varfun{\gamma_M}{M}{\what{M}}$ rendering commutative the diagram
\begin{equation}\label{Eq:gamma}
\xymatrix @R=15pt @C=15pt{  M \ar@{-->}^-{\gamma_M}[rr] \ar@{->}_-{\pi_n}[dr]  & & \what{M} \ar@{->}^-{p_n}[dl]  \\&  \frac{M}{F_nM} &}
\end{equation}
for all $n\in\N$, where $\varfun{p_{n}}{\what{M}}{{M}/{F_{n}M}}$ are the natural projections. The subsequent result can be proven directly (see also \cite[Proposition D.II.3]{NasOys}).

\begin{proposition}\label{prop:completion}
An object $\left( M, F_{n}M\right) $ in $\FBim{S}{R}$ is complete and Hausdorff as a topological space if and only if the map $\gamma_{\Sscript{M}}$  of diagram \eqref{Eq:gamma}  is an isomorphism.
\end{proposition}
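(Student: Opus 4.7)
The plan is to split the claim into injectivity and surjectivity of $\gamma_M$, identifying each with one of the two topological properties (Hausdorff, respectively complete), and then upgrading a bijective $\gamma_M$ to a filtered isomorphism.

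For the Hausdorff part, I would compute the kernel of $\gamma_M$: an element $m\in M$ maps to zero in every quotient $M/F_nM$ iff $m\in\bigcap_{n\in\N}F_nM$, so $\ker(\gamma_M)=\bigcap_n F_nM$. Since the linear topology induced by the filtration is Hausdorff precisely when this intersection vanishes (as recalled right before the statement), one gets $M$ Hausdorff $\iff$ $\gamma_M$ is injective.

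For completeness, assume first that $M$ is Hausdorff. Any element of $\what{M}=\prlimit{n}{M/F_nM}$ is a compatible family $(x_n)_n$ with $x_n\in M/F_nM$ and $\pi_{m,n}^M(x_m)=x_n$ for $m\geq n$. Lifting each $x_n$ to some $m_n\in M$, compatibility translates into $m_{n+1}-m_n\in F_nM$, so $(m_n)$ is Cauchy in $M$; if $M$ is complete its limit $m$ satisfies $\gamma_M(m)=(x_n)_n$, so $\gamma_M$ is surjective. Conversely, assume $\gamma_M$ is surjective and let $(y_k)$ be a Cauchy sequence in $M$. Passing to a subsequence $(y_{k_j})$ with $y_{k_{j+1}}-y_{k_j}\in F_jM$ yields a compatible family $(y_{k_j}+F_jM)_j\in\what{M}$; by surjectivity it lifts to some $m\in M$, which is then the limit of the subsequence (and hence of the original sequence, since it is Cauchy). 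Together with the Hausdorff equivalence, this gives $M$ complete and Hausdorff $\iff$ $\gamma_M$ is bijective.

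Finally, to upgrade to an isomorphism in $\FBim{S}{R}$, I would use the commutativity $\pi_n=p_n\circ\gamma_M$ of diagram \eqref{Eq:gamma} together with the projective-limit filtration from \eqref{eq:projFiltr}, which gives $\ker(p_n)=F_n\what{M}$. Hence $F_nM=\ker(\pi_n)=\gamma_M^{-1}(F_n\what{M})$, so whenever $\gamma_M$ is bijective one has $\gamma_M(F_nM)=F_n\what{M}$ and the inverse is automatically filtered. Conversely, if $\gamma_M$ is an isomorphism in $\FBim{S}{R}$, then $M$ inherits Hausdorffness and completeness from $\what{M}$, which is itself Hausdorff (intersection of the kernels $\ker(p_n)$ is zero by definition of the projective limit) and complete (standard, using that a Cauchy sequence in $\what{M}$ produces a compatible family componentwise by passing to the quotients $M/F_nM$, which are discrete).

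The most delicate step is the surjectivity direction of completeness, because one must pass to a subsequence to match the speed of a Cauchy sequence with the filtration levels; everything else is essentially a kernel computation or a direct transport of topology along the bijection.
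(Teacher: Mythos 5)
Your proof is correct. The paper actually omits the argument entirely (it says the result "can be proven directly" and refers to \cite[Proposition D.II.3]{NasOys}), and what you supply is exactly that standard direct verification: $\ker(\gamma_M)=\bigcap_n F_nM$ handles the Hausdorff part, the lift-and-telescope argument (with the subsequence trick in the converse) handles completeness versus surjectivity, and the identity $\ker(p_n)=F_n\what{M}$ from \eqref{eq:Mhatfiltration} upgrades the bijection to a filtered isomorphism, consistent with Remark \ref{rem:gammafiltiso}.
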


This justifies the following definition.

\begin{definition}\label{def:completion}
\label{Def:T2compl}For a filtered $\left( S,R\right) $-bimodule $M$, we define its \emph{Hausdorff completion} to be the inverse limit $\what{M}$ over the natural projective system as in \eqref{eq:filtprojsyst}.
\end{definition}

As a matter of terminology, henceforth we will understood that a complete bimodule is Hausdorff as well, whence we will just refer to \emph{complete} bimodules and \emph{completions} of filtered bimodules.

The fact that Definition \ref{def:completion} is consistent (i.e., that the completion $\what{M}$ of a filtered $(S,R)$-bimodule $M$ is a complete $(S,R)$-bimodule) follows  from  the subsequent Lemma \ref{lemma:Mhatcomplete} (see also \cite[Proposition D.II.3]{NasOys}).

\begin{lemma}\label{lemma:Mhatcomplete}
Let $\left(M,F_nM\right)$ be a filtered $(S,R)$-bimodule. We have an isomorphism $\what{M}/F_{n}\what{M}\cong {M}/{F_{n}M}$ in $\FBim{S}{R}$ for all $n\geq 0$ which is compatible with the morphisms of the projective system $\eqref{eq:filtprojsyst}$. In particular, $\what{M}=\prlimit{n}{M/F_nM}$ is a complete $(S,R)$-bimodule.
\end{lemma}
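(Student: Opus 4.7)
The plan is to work directly from the definition $\what{M}=\prlimit{n}{M/F_nM}$, exploiting the projective-limit filtration recorded in \eqref{eq:projFiltr}. First, I would make the filtration on $\what{M}$ explicit. Each quotient $M/F_mM$ carries the induced filtration $F_k(M/F_mM)=(F_kM+F_mM)/F_mM$, which collapses to $F_kM/F_mM$ when $k\leq m$ and to $0$ when $k\geq m$. Applying \eqref{eq:projFiltr}, a coherent sequence $\xi=(x_m+F_mM)_{m\geq 0}\in\what{M}$ lies in $F_n\what{M}$ if and only if each representative can be chosen so that $x_m\in F_nM+F_mM$ for every $m$; for $m\geq n$ this simply says $x_m\in F_nM$.

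Next, I would identify the map realizing the desired isomorphism. The natural candidate is the canonical projection $p_n\colon\what{M}\to M/F_nM$ from diagram \eqref{Eq:gamma}. Surjectivity is immediate, since $p_n\circ\gamma_M=\pi_n$ is already surjective on $M$. For the kernel, observe that $\xi\in\ker(p_n)$ means $x_n\in F_nM$; combined with the coherence condition $x_m-x_n\in F_nM$ for $m\geq n$ (which is precisely the compatibility with $\pi^M_{m,n}$ from \eqref{eq:filtprojsyst}), this is equivalent to $x_m\in F_nM$ for all $m\geq n$, i.e.\ to $\xi\in F_n\what{M}$ by the previous paragraph. Hence $p_n$ induces the claimed isomorphism $\what{M}/F_n\what{M}\cong M/F_nM$ in $\FBim{S}{R}$, and the compatibility with the transition maps $\pi_{m,n}^M$ is automatic because $p_n$ and $p_m$ arise from the universal property of the inverse limit.

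Finally, to establish that $\what{M}$ is complete in the sense of Proposition \ref{prop:completion}, I would apply this isomorphism twice. By Proposition \ref{prop:completion} it suffices to show that $\gamma_{\what{M}}\colon \what{M}\to \prlimit{n}{\what{M}/F_n\what{M}}$ is an isomorphism. Substituting the identification just obtained, the target becomes $\prlimit{n}{M/F_nM}=\what{M}$ and $\gamma_{\what{M}}$ corresponds to the identity. Separatedness of $\what{M}$ follows from the same description: if $\xi\in\bigcap_n F_n\what{M}$ then $x_m\in F_mM$ for every $m$, so each component $x_m+F_mM$ vanishes and $\xi=0$.

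I do not anticipate a real obstacle here; the argument is a careful bookkeeping with the projective-limit filtration, whose only delicate point is correctly translating membership in $F_n\what{M}$ via the coherence conditions of the inverse system. Once that is done, both the isomorphism $\what{M}/F_n\what{M}\cong M/F_nM$ and the completeness of $\what{M}$ drop out together.
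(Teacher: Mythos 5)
Your argument is correct and is exactly the route the paper intends: the paper omits the proof but points to the two facts you establish, namely that the quotients $M/F_nM$ carry the discrete filtration and that the projective-limit filtration \eqref{eq:projFiltr} satisfies $F_n\what{M}=\ker{p_n}$, from which the isomorphism $\what{M}/F_n\what{M}\cong M/F_nM$ and completeness via Proposition \ref{prop:completion} follow as you describe. The only point you gloss over is that for $m<n$ the membership condition $x_m\in F_nM+F_mM=F_mM$ is also required, but it follows automatically from coherence together with $x_n\in F_nM$, so there is no gap.
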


The proof is omitted, but we point out that every quotient module $M/F_nM$ is filtered with the discrete filtration $F_k\left(M/F_nM\right):=F_kM/F_nM$ for all $k\geq 0$ and that $\what{M}$ is filtered with the filtration given in \eqref{eq:projFiltr}, which satisfies
\begin{equation}\label{eq:Mhatfiltration}
F_{m}\what{M}=\ker{ p_{m}:\what{M}\rightarrow \frac{M}{F_{m}M}}.
\end{equation}
In particular, the canonical $(S,R)$-bilinear morphism $\varfun{\gamma_{\Sscript{M}}}{M}{\what{M}}$ is always filtered and every $M/F_nM$ is a complete $(S,R)$-bimodule. Moreover, the canonical projections $\pi_n^{\Sscript{M}}:M\to M/F_nM$ are filtered for all $n\geq 0$, whence continuous, and every $F_nM$ is closed in $M$, as preimage of the closed set $0$ in $M/F_nM$.

\begin{remark}\label{rem:gamma}
Assume that $M$ is a complete $(S,R)$-bimodule. Then the inverse morphism of  $\varfun{\gamma_{\Sscript{M}}}{M}{\what{M}}$ is given by the assignment
\begin{equation}\label{eq:invCan}
\sfun{\sigma_{\Sscript{M}}}{\what{M}}{M}{\left(x_k+F_kM\right)_{k\geq 0}}{\lim_{k\to\infty}(x_k)},
\end{equation}
which is well-defined because the limit $\underset{k\to\infty}{\lim}(x_k)$ is independent of the representatives chosen for the equivalence classes $x_k+F_kM\in {M}/{F_kM}$, $k\geq0$.
\end{remark}

The following conventions turn out to be very useful in dealing with completions, whence we opted for introduce them in this general context.

\begin{notations}\label{Not:infty}
Given a filtered $(S,R)$-bimodule $M$, by a slight abuse of notation we are going to denote the elements of its completion $\what{M}$ by $\what{x}_\infty$, meaning by that an $\N $-tuple $\big( x_n + F_nM \big)_{n \geq 0} \in \prod_{n \geq 0} {M}/{F_nM}$ satisfying $x_{n+1}-x_{n} \in F_{n}M$ for all $n \geq 0$.
Observe that this condition is in fact equivalent to claim that $\big( x_n + F_nM \big)_{n \geq 0} \in \prlimit{n}{ {M}/{F_{n}M}}$. If $x \in M$, then its image via $\gamma_{\Sscript{M}}$ in $\what{M}$ will be denoted by $\what{x}$, which corresponds to the $\N $-tuple $\big( x + F_nM \big)_{n \geq 0}$. When $M$ is complete, and so $\gamma_{\Sscript{M}}$ and $\sigma_{\Sscript{M}}$ are mutually inverse functions, the element $x_\infty:=\sigma_{\Sscript{M}}\left(\what{x}_\infty\right)$ belongs to $M$ and the condition \begin{equation*}
\big( x_n + F_nM \big)_{n \geq 0}=\what{x}_{\infty}=\gamma_{\Sscript{M}}\sigma_{\Sscript{M}}\left(\what{x}_\infty\right) = \gamma_{\Sscript{M}}\left(x_\infty\right)=\big( x_\infty + F_nM \big)_{n \geq 0}
\end{equation*}
implies the following: for all $n \geq 0$, there exists $k(n) \geq 0$ such that for every $p \geq k(n)$ we have $x_{\infty} - x_p \in F_nM$, i.e.,  $x_\infty=\limxn$ in $M$.
\end{notations}

\begin{remark}\label{rem:limits}
Observe that a sequence $\left\{x_n\mid n\in\N\right\}$ in $M$ is Cauchy if and only if the sequence $\left\{\what{x_n}\mid n\in\N \right\}$ in $\what{M}$ is Cauchy. Moreover, every element $\what{x}_\infty\in\what{M}$ can be seen as a Cauchy sequence in $M$, in the sense that if $\what{x}_\infty=\left(x_n+F_nM\right)_{n\geq0}\in  \what{M} $ then it follows that $\left\{x_n\mid n\in\N\right\}$ is Cauchy in $M$. It turns out, with the conventions introduced, that $\what{x}_\infty=\limn\left(\what{x_n}\right)$ in $\what{M}$ where $\left\{x_n\mid n\in\N\right\}$ is the Cauchy sequence defining $\what{x}_\infty$.

Again, by a slightly but consistent abuse of notation, we are going to write $\what{x}_\infty=\limn\left(x_n\right)$ whenever $\what{x}_\infty=\left(x_n+F_nM\right)_{n\geq0}$. This proves to be very useful when one will have to compute, for example, $\what{f}\,\left(\what{x}_\infty\right)$ for a given $f:M\to N$ (the meaning of $\what{f}$ is the expected one). Indeed
\begin{equation}\label{eq:fhat}
\what{f}\,\left(\limn(x_n)\right)=\what{f}\,\left(\what{x}_\infty\right)=\left(f(x_n)+F_nM\right)_{n\in\N}=\limn(f(x_n))=:\what{f(x)}_{\infty}.
\end{equation}

Notice further that for two given Cauchy sequences $\{x_n\mid n\in\N\}$ and $\{y_n\mid n\in\N\}$ in $M$, we have that $\limn(x_n)=\limn(y_n)$ in $\what{M}$ if and only if $\what{x}_\infty=\what{y}_\infty$, if and only if $x_n-y_n\in F_nM$ for all $n\in\N$.
\end{remark}

\begin{remark}\label{rem:gammafiltiso}
With this new notations, it is easy to show that if $M$ is complete, then the morphism $\sigma_{\Sscript{M}}$ is filtered as well, so that $\gamma_{\Sscript{M}}$ is a filtered isomorphism. Indeed, if we have $\what{x}_\infty \in \ker{ p_{n}:\what{M}\rightarrow {M}/{F_{n}M}}$ and if $x_\infty=\sigma_{\Sscript{M}}\left(\what{x}_\infty\right)$ is the limit of the sequence $\left\{x_k\mid k\geq 0\right\}$ as in Remark \ref{rem:gamma}, then $0=p_k\left(\what{x_\infty}\right)=x_\infty + F_kM$ for all $0\leq k\leq n$. In particular $x_\infty\in F_nM$.

For the sake of completeness, recall that the inverse limit topology on $\what{M}$ is the coarsest topology for which all the canonical projections $p_n$'s are continuous. It can be proven that the inverse limit topology is equivalent to the linear topology induced by the filtration \eqref{eq:projFiltr}.
\end{remark}

Consistently with our definition of a complete bimodule over filtered algebras, we say that a filtered algebra $R$ is a \emph{complete} algebra if it is also complete as a module.
Given a filtered algebra $R$, its completion $\what{R}$ as a filtered module inherits a structure of filtered algebra itself, which is complete as a module and such that the natural map $\gamma_{\Sscript{R}} : R \to \what{R}$ is a map of filtered algebras.
Explicitly, the multiplication $\varfun{\widetilde{\mu}}{\what{R}\times \what{R}}{\what{R}}$ is given by $\widetilde{\mu}\left(\what{x}_\infty, \what{y}_\infty\right)=\what{xy}_\infty$ and the unit is $1_{\Sscript{\what{R}}}=\what{1_{\Sscript{R}}}$. Therefore, the completion of a filtered algebra is a complete algebra, as expected. We point out in advance that $\what{R}$ (in fact, any complete algebra) with a slightly different multiplication will turn out to be a monoid inside the monoidal category of complete modules with a suitable tensor product (see the Lemma \ref{rem:calg} below): in general, indeed, the ordinary tensor product $\otimes$ does not endow $\rmod{\K}^{\mathsf{c}}$ with a monoidal structure. In this way, we will be able to recover the definition of a complete algebra as a monoid in a monoidal category.

\begin{remark}\label{rem:reflimit}
It is well-known that the forgetful functor from a category of modules to the category of abelian groups creates, preserves and reflects limits (for the terminology see e.g. \cite[\S V.1]{MR1712872} and \cite[\S13]{adamek}). As a consequence, for a given filtered $(S,R)$-bimodule $M$, if we consider $\gamma_{\Sscript{S}}\otimes \gamma_{\Sscript{R^{\text{op}}}}:S\otimes R^{\text{op}}\to \what{S}\otimes \what{R}^{\text{op}}$ and if $\cR:\Bim{\what{S}}{\what{R}}\to \Bim{S}{R}$ is the restriction of scalars functor, then a projective cone $\tau:N\to\cD_{\Sscript{M}}$ in $\Bim{\what{S}}{\what{R}}$ on the functor $\cD_{\Sscript{M}}:\N\to \Bim{\what{S}}{\what{R}}$ mapping $n$ to $M/F_nM$ is a limiting cone of $\cD_{\Sscript{M}}$ if and only if $\cR(\tau):\cR(M)\to \cR\cD_{\Sscript{M}}$ is a limiting cone of $\cR\cD_{\Sscript{M}}$.
\end{remark}

\begin{lemma}\label{rem:invCan}
Given a filtered $(S,R)$-bimodule $M$, its completion $\what{M}$ is a complete $\left(\,\what{S},\what{R}\,\right)$-bimodule.
\end{lemma}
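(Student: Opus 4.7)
The strategy is to first extend the $(S,R)$-bimodule structure on $\what{M}$ to an $(\what{S},\what{R})$-bimodule structure, and then observe that completeness is automatic.

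My plan is to define the left $\what{S}$-action and the right $\what{R}$-action on $\what{M}$ via the Cauchy-sequence interpretation of Remark~\ref{rem:limits}. Concretely, for $\what{s}_\infty=\limn(s_n)\in\what{S}$ and $\what{x}_\infty=\limn(x_n)\in\what{M}$, I would set
\begin{equation*}
\what{s}_\infty\cdot\what{x}_\infty:=\limn(s_nx_n),
\end{equation*}
and symmetrically for the right $\what{R}$-action. First I would check that $\{s_nx_n\}$ is Cauchy in $M$: for $p,q$ large enough one has $s_p-s_q\in F_nS$ and $x_p-x_q\in F_nM$, so that $s_px_p-s_qx_q=s_p(x_p-x_q)+(s_p-s_q)x_q\in F_0S\cdot F_nM+F_nS\cdot F_0M\subseteq F_nM$. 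An identical estimate shows that the limit does not depend on the chosen representing Cauchy sequences. From this, the associativity, unitality and bimodule axioms pass from $S,R,M$ to $\what{S},\what{R},\what{M}$ by taking componentwise limits. Moreover, for $s\in S$ one has $\gamma_S(s)\cdot\what{x}_\infty=\limn(sx_n)$, which coincides with the original $(S,R)$-action on $\what{M}$ coming from the projective limit in Lemma~\ref{lemma:Mhatcomplete}; hence we are genuinely extending that structure.

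Next I would verify the compatibility with the filtrations, namely $F_n\what{S}\cdot F_m\what{M}\subseteq F_{n+m}\what{M}$ and $F_m\what{M}\cdot F_n\what{R}\subseteq F_{n+m}\what{M}$. Using the description of $F_n\what{S}$ and $F_m\what{M}$ given in Equation~\eqref{eq:Mhatfiltration}, I may choose representatives satisfying $s_k\in F_nS$ for $k\geq n$ and $x_k\in F_mM$ for $k\geq m$. Then $s_kx_k\in F_nS\cdot F_mM\subseteq F_{n+m}M$ for every $k\geq\max(n,m)$, so $\limn(s_nx_n)\in F_{n+m}\what{M}$. The right-action case is symmetric. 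This shows that $\what{M}$ is a filtered $(\what{S},\what{R})$-bimodule.

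Finally, completeness follows at no extra cost: the filtration $\{F_n\what{M}\mid n\in\N\}$ on $\what{M}$ considered as an $(\what{S},\what{R})$-bimodule coincides with the one coming from the $(S,R)$-structure that was already proved complete in Lemma~\ref{lemma:Mhatcomplete}, so the induced linear topology is unchanged. The main (and only) obstacle is the careful bookkeeping of Cauchy sequences to simultaneously establish well-definedness, independence of representatives and compatibility with the filtrations on $\what{S}$, $\what{R}$ and $\what{M}$; once these are in place, the bimodule axioms and completeness follow by routine limit arguments.
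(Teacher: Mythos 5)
Your proposal is correct and follows essentially the same route as the paper: both define the $\left(\,\what{S},\what{R}\,\right)$-action componentwise by $\what{s}_\infty\cdot\what{x}_\infty\cdot\what{r}_\infty=(s_nx_nr_n+F_nM)_{n\geq 0}$, the only difference being that you verify well-definedness and filtration compatibility by an explicit Cauchy-sequence estimate, whereas the paper packages the same facts by observing that each $M/F_nM$ is an $(S/F_nS,R/F_nR)$-bimodule, restricting scalars along $p_n^S$ and $p_n^R$, and passing to the projective limit. Your closing observation that completeness comes for free because the filtration (hence the topology) is unchanged matches the paper's appeal to Lemma \ref{lemma:Mhatcomplete} together with Remark \ref{rem:reflimit}.
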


\begin{proof}
The completion $\what{M}$ of a filtered $(S,R)$-bimodule $M$ can be endowed with an $\left(\,\what{S},\what{R}\,\right)$-bimodule structure as follows. If $\left(M,F_nM\right)$ is a filtered $(S,R)$-bimodule, then for every $n\geq 0$ we have that $F_nS\cdot M\subseteq F_nM$ and $M\cdot F_nR\subseteq F_nM$, whence $M/F_nM$ is an $\left(S/F_nS,R/F_nR\right)$-bimodule and a filtered $\left(\,\what{S},\what{R}\,\right)$-bimodule via restriction of scalars through the canonical projections $p_n^S:\what{S}\to S/F_nS$ and $p_n^R:\what{R}\to R/F_nR$ respectively. In this way, the morphisms $\varfun{\pi^M_{n,m}}{M/F_nM}{M/F_mM}$ turn out to be $\left(\,\what{S},\what{R}\,\right)$-bilinear as well. It follows that an $\left(\,\what{S},\what{R}\,\right)$-bimodule structure is induced on the $(S,R)$-bimodule $\what{M}$  and it is explicitly given as follows: if $\what{r}_\infty\in\what{R}$, $\what{s}_\infty\in\what{S}$ and $\what{x}_\infty\in\what{M}$ then $\what{(s x r)}_{\infty}=\what{s}_\infty \cdot \what{x}_\infty \cdot \what{r}_\infty\in\what{M}$ corresponds to the $\N$-tuple $\left(s_n\cdot x_n\cdot r_n+ F_nM\right)_{n\geq 0}$.

In this way, the canonical projections $p_{m}:\what{M}\rightarrow {M}/{F_{m}M}$ are $\left(\,\what{S},\what{R}\,\right)$-bilinear, too. In particular, $\left(\what{M},F_n\what{M}\right)$ is a filtered $\left(\,\what{S},\what{R}\,\right)$-bimodule and the family of canonical projections $\what{M}\rightarrow {\what{M}}/{F_{\ast}\what{M}}$ is a projective cone in $\Bim{\what{S}}{\what{R}}$. Hence $\what{M} \cong \prlimit{n}{{\what{M}}/{F_n\what{M}}}$ as $\left(\,\what{S},\what{R}\,\right)$-bimodules as well in view of Remark \ref{rem:reflimit} and Lemma \ref{lemma:Mhatcomplete}.
This concludes the proof of the statement.
\end{proof}

In principle, we may consider on the one hand the full subcategory $\CBim{S}{R}$ of $\FBim{S}{R}$ given by complete $(S,R)$-bimodules. Objects are filtered $(S,R)$-bimodules $\left(M,F_nM\right)$ such that $M\cong \prlimit{n}{{M}/{F_nM}}$ as bimodules and arrows are filtered morphisms of complete bimodules $$\CHom{S,\,R}{M}{N}=\FHom{S,\,R}{M}{N}.$$
On the other hand, analogously, we may consider the full subcategory $\CBim{\what{S}}{\what{R}}$ of $\FBim{\what{S}}{\what{R}}$ given by complete $\left(\, \what{S},\what{R}\,\right)$-bimodules and filtered morphisms of complete bimodules.

\begin{proposition}\label{prop:catequiv}
We have an equivalence of categories between $\CBim{S}{R}$ and $\CBim{\what{S}}{\what{R}}$.
\end{proposition}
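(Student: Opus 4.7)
The plan is to exhibit mutually quasi-inverse functors
$F\colon \CBim{S}{R}\to\CBim{\what S}{\what R}$ and $G\colon \CBim{\what S}{\what R}\to\CBim{S}{R}$, and in fact to argue that they give an isomorphism of categories. The functor $G$ is simply restriction of scalars along the filtered algebra maps $\gamma_S\colon S\to\what S$ and $\gamma_R\colon R\to\what R$: restriction leaves the underlying filtration untouched, so a complete $(\what S,\what R)$-bimodule remains complete when viewed over $(S,R)$, and a filtered $(\what S,\what R)$-bilinear morphism is \emph{a fortiori} filtered $(S,R)$-bilinear. The functor $F$ uses Lemma \ref{rem:invCan} together with Proposition \ref{prop:completion}: for $M\in\CBim{S}{R}$ the canonical map $\gamma_M\colon M\to\what M$ is a filtered isomorphism, while $\what M$ carries a natural complete $(\what S,\what R)$-structure; transporting this structure along $\gamma_M$ endows $M$ with actions
\[
\what s_\infty\cdot m:=\limn(s_n\cdot m),\qquad m\cdot \what r_\infty:=\limn(m\cdot r_n),
\]
where $\what s_\infty=\limn(s_n)$, $\what r_\infty=\limn(r_n)$, and the limits are computed in $M$. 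Continuity of the original $(S,R)$-actions turns Cauchy sequences in $S$ and $R$ into Cauchy sequences in $M$, so completeness of $M$ guarantees that these limits exist, and a second continuity argument shows independence from the choice of representatives. A filtered $(S,R)$-bilinear morphism $f\colon M\to N$ between complete bimodules automatically respects the new actions, since its continuity (Remark \ref{rem:continuity}) yields $f(\what s_\infty\cdot m)=\limn f(s_n\cdot m)=\limn s_n\cdot f(m)=\what s_\infty\cdot f(m)$, so no extra condition is imposed on morphisms and $F$ is functorial.

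To complete the proof I would then check that $GF=\id_{\CBim{S}{R}}$ and $FG=\id_{\CBim{\what S}{\what R}}$ on the nose. The first identity is immediate: taking the constant Cauchy sequence $s_n=s$ yields $\gamma_S(s)\cdot m=s\cdot m$, so applying $F$ and restricting back via $\gamma_S,\gamma_R$ recovers the original $(S,R)$-structure. The second identity is the more delicate one, and I expect it to be the main obstacle. Given $M\in\CBim{\what S}{\what R}$, one must show that the original $\what S$-action coincides with the action reconstructed by $F$ after restricting via $G$: for $\what s_\infty=\limn(s_n)$ and $m\in M$, one needs $\what s_\infty\cdot m=\limn(\gamma_S(s_n)\cdot m)$, which amounts to the continuity of left multiplication by the fixed element $m$ as a map $\what S\to M$. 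This separately-continuous behaviour has to be extracted from the fact that the $(\what S,\what R)$-action on $M$ is filtered and that $M$ is complete, so the point is to argue that the Cauchy sequence $\{\gamma_S(s_n)\cdot m\}$ in $M$ converges precisely to $\what s_\infty\cdot m$; an analogous argument handles the right $\what R$-action. Once this is settled, $FG$ is the identity on objects, and it is the identity on morphisms because a filtered $(S,R)$-bilinear map between complete $(\what S,\what R)$-bimodules is forced, by the same limit argument, to be $(\what S,\what R)$-bilinear as well, which yields the desired isomorphism of categories (in particular, the equivalence claimed).
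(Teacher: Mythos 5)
Your proposal is correct and follows essentially the same route as the paper, whose proof is just a one-line appeal to Proposition \ref{prop:completion}, Remark \ref{rem:reflimit} and Lemma \ref{rem:invCan}: restriction of scalars along $\gamma_S,\gamma_R$ in one direction, and transport of the $(\what S,\what R)$-structure of $\what M$ along the filtered isomorphism $\gamma_M$ in the other. Your explicit verification that $FG=\id$ via the estimate $\what s_\infty\cdot m-\gamma_S(s_n)\cdot m\in F_n\what S\cdot M\subseteq F_nM$ correctly supplies the detail the paper leaves implicit.
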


\begin{proof}
The proof is a consequence of Proposition \ref{prop:completion} and Remark \ref{rem:reflimit} together with Lemma \ref{rem:invCan}.
\end{proof}

The key role played by Proposition \ref{prop:catequiv} will be that of allowing us to work in both categories $\CBim{S}{R}$ and $\CBim{\what{S}}{\what{R}}$ indifferently, depending on our needs or on what we would like to stress, even if the algebras $S$ and $R$ are not themselves complete.

Denote by $\mathscr{U}:\CBim{S}{R} \rightarrow \FBim{S}{R}$ the functor that forgets the completeness, i.e., that associates to every complete $\left( S,R\right) $-bimodule its underlying filtered $\left( S,R\right)$-bimodule structure. What we showed in Remark \ref{rem:gammafiltiso} can be restated now by saying that if $M$ is complete, then $M\cong \what{\mathscr{U}(M)}$ is a filtered isomorphism.

The other way around, we have a functor
\begin{equation*}
\varfun{\what{\left( -\right) }}{\FBim{S}{R}}{\CBim{S}{R}}
\end{equation*}
that associates every filtered bimodule with its completion and every morphism of filtered bimodules $\varfun{f}{M}{N}$ with the morphism $\what{f}:=\prlimit{n}{\widetilde{f_n}}$, where $\varfun{\widetilde{f_n}}{{M}/{F_nM}}{{N}/{F_nN}}$ is the map induced on the quotients (see e.g. \cite[Chapter I, \S3]{MR1420862}).

\begin{remark}\label{rem:counit}
Using Proposition \ref{prop:completion} and Remark \ref{rem:gammafiltiso} one can check that $\what{(-)}$ is left adjoint to the forgetful functor $\mathscr{U}$, i.e., that we have a natural isomorphism 
\begin{equation}\label{eq:rightadj}
\FHom{S,\,R}{N}{\mathscr{U}(M)} \cong \CHom{S,\,R}{ \what{N}}{M}.
\end{equation}
The unit of this adjunction is the canonical map $\varfun{\gamma_{\Sscript{N}}}{N}{\mathscr{U}\left(\what{N}\right)}$ for all $N\in \FBim{S}{R}$; the counit is "its inverse" $\varfun{\sigma_{\Sscript{M}}}{\what{\mathscr{U}(M)}}{M}$ for all $M\in \CBim{S}{R}$. \footnote{One should notice that here some confusion may arise, as we denoted by $\varfun{\gamma_{\Sscript{M}}}{M}{\what{M}=\what{\mathscr{U}(M)}}$ also the canonical isomorphism in the category of complete bimodules whose actual inverse is $\sigma_M$. Cf. Remark \ref{rem:gamma}. Nevertheless, as we may embed $\CBim{S}{R}\left(M,\what{\mathscr{U}(M)}\right)\subseteq \FBim{S}{R}\left(\mathscr{U}(M),\mathscr{U}\left(\what{\mathscr{U}(M)}\right)\right)$ and $\mathscr{U}(\gamma_{\Sscript{M}})=\gamma_{\mathscr{U}(M)}$, we can identify the two morphism and it will be clear from the context which one we are referring to.}

Furthermore, we point out that the bijection in equation \eqref{eq:rightadj} encodes the universal property of the completion: every filtered morphism $\varfun{g}{N}{M}$ from a filtered $(S,R)$-bimodule $N$ to a complete $(S,R)$-bimodule $M$ factors through the completion of $N$, i.e., we have a commutative diagram of filtered morphisms
\begin{equation}\label{eq:univprophat}
\xymatrix@R=15pt@C=30pt{N \ar[rr]^{g} \ar[dr]_{\gamma_N} & & M \\
 & \what{N} \ar[ur]_{\what{g}} & }
\end{equation}
\end{remark}

Summing up, we obtained a commutative diagram
\begin{equation}\label{Eq:dash}
\xymatrix @R=5pt @C=30pt{  &  \CBim{S}{R}  \ar@{<->}[dd] \\
\FBim{S}{R} \ar@{->}^-{\what{(-)}}[ru]  \ar[rd]_-{\what{(-)}} &  \\
 & \CBim{\what{S}}{\what{R}} }
\end{equation}
where the vertical arrow is the equivalence of Proposition \ref{prop:catequiv}.
We end this subsection by recalling the following fact, which will be used in the forthcoming constructions.

\begin{lemma}
Given filtered algebras $S$, $R$ and $T$, for every complete $(R,T)$-bimodule $N$ the assignment
\begin{equation}\label{eq:homfunct}
\varfun{\CHom{-,T}{N}{-}}{\CBim{S}{T}}{\CBim{S}{R}}
\end{equation}
gives a well-defined functor.
\end{lemma}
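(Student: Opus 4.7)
The plan is to verify three things for the candidate functor $\CHom{-,T}{N}{-}$: first, that for every complete $(S,T)$-bimodule $M$ the object $\CHom{-,T}{N}{M}$ carries the filtered $(S,R)$-bimodule structure already constructed in Example \ref{ex:HomFiltr}; second, that this filtered bimodule is in fact complete; and third, that post-composition by a filtered morphism preserves the filtration and the completeness, so that $\CHom{-,T}{N}{-}$ is functorial. The first part is essentially a bookkeeping check: since both $N$ and $M$ are filtered, every continuous right $T$-linear map from $N$ to $M$ is filtered (Remark \ref{rem:continuity}), so $\CHom{-,T}{N}{M}$ coincides as a set with $\FHom{-,T}{N}{M}$, with filtration given by \eqref{eq:FiltrHom} and $(S,R)$-bimodule structure $(sfr)(x):=sf(rx)$; the filtration compatibility $F_pS\cdot F_n\FHom\cdot F_qR\subseteq F_{p+n+q}\FHom$ is a direct consequence of the bimodule filtrations of $M$ and $N$.

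For the completeness part, the Hausdorff property is immediate: if $f\in\bigcap_n F_n\CHom{-,T}{N}{M}$, then $f(x)\in \bigcap_n F_nM=0$ for every $x\in N$, using that $M$ is Hausdorff. To show every Cauchy sequence $\{f_k\mid k\in\N\}$ converges, I would first argue that for each $x\in F_mN$, the sequence $\{f_k(x)\}$ is Cauchy in $M$, since $(f_p-f_q)\in F_n\CHom{-,T}{N}{M}$ forces $(f_p-f_q)(x)\in F_{n+m}M$ for large $p,q$. Completeness of $M$ then produces a pointwise limit $f(x):=\lim_k f_k(x)$; additivity, left $S$-linearity, and right $T$-linearity of $f$ follow from continuity of the algebraic operations on $M$. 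That $f$ is itself filtered (i.e., $f(F_mN)\subseteq F_mM$) uses the crucial fact that each $F_mM$ is closed in $M$, being the preimage of $0$ under the continuous projection $\pi_m^M\colon M\to M/F_mM$ from the discussion after Lemma \ref{lemma:Mhatcomplete}. Finally, to see that $f_k\to f$ in the filtered topology of $\CHom{-,T}{N}{M}$, for each $n\geq 0$ I would choose $k(n)$ with $f_p-f_q\in F_n\CHom{-,T}{N}{M}$ for all $p,q\geq k(n)$, fix $p\geq k(n)$, and let $q\to\infty$: for arbitrary $m\geq 0$ and $x\in F_mN$, the inclusion $(f_p-f_q)(x)\in F_{n+m}M$ plus closedness of $F_{n+m}M$ yield $(f_p-f)(x)\in F_{n+m}M$, i.e., $f_p-f\in F_n\CHom{-,T}{N}{M}$.

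For functoriality, given a morphism $g\colon M\to M'$ in $\CBim{S}{T}$, I would set $\CHom{-,T}{N}{g}(f):=g\circ f$. $(S,R)$-bilinearity of this map is immediate, and if $f(F_kN)\subseteq F_{n+k}M$ then $(g\circ f)(F_kN)\subseteq g(F_{n+k}M)\subseteq F_{n+k}M'$, so $\CHom{-,T}{N}{g}$ is filtered; functoriality in $g$ follows from associativity of composition, and it maps into $\CBim{S}{R}$ by step two applied to $M'$.

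The only mildly delicate point is ensuring the pointwise limit of a Cauchy sequence of filtered right $T$-linear maps is again filtered and continuous, but this is handled by the closedness of each $F_mM$ inside the complete bimodule $M$ together with continuity of the $S$- and $T$-actions; everything else is an honest bookkeeping check on degrees.
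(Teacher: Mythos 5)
The paper states this lemma without proof, so there is no in-paper argument to compare against; your proposal is a correct and essentially complete verification of what the lemma asserts. The three checks you carry out (the filtered $(S,R)$-bimodule structure on $\CHom{-,T}{N}{M}$ via \eqref{eq:FiltrHom}, completeness, and functoriality of post-composition) are exactly what is needed, and the two genuinely delicate points are handled properly: you use closedness of each $F_mM$ (preimage of $\{0\}$ under the continuous projection $M\to M/F_mM$) to see that the pointwise limit $f$ is again filtered, and you pass to the limit in $q$ inside the closed set $F_{n+m}M$ to conclude $f_p-f\in F_n\CHom{-,T}{N}{M}$, which is the step that actually proves convergence in the hom-filtration rather than merely pointwise. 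One small misattribution: Remark \ref{rem:continuity} does \emph{not} say that every continuous right $T$-linear map between filtered bimodules is filtered --- continuity only yields filteredness with respect to an equivalent sub-filtration of the source, and the implication you invoke is false in general. This is harmless here, however, because the paper defines the hom-sets of $\CBim{S}{T}$ by $\CHom{S,T}{M}{N}=\FHom{S,T}{M}{N}$, so the identification of $\CHom{-,T}{N}{M}$ with the filtered right $T$-linear maps holds by definition and your argument proceeds unchanged.
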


\subsection{The topological tensor product of filtered bimodules}\label{ssc:TTPFB}

The main objective of this subsection is to show (or rather to recall) briefly that the functor
\begin{equation*}
\varfun{\CHom{S,\,R}{M}{\CHom{-,\,T}{N}{-}}}{\CBim{S}{T}}{\CBim{S}{R}}
\end{equation*}
is representable for every pair of complete bimodules ${}_{\Sscript{S}}M_{\Sscript{R}}$ and ${}_{\Sscript{R}}N_{\Sscript{T}}$. The representing object will be called \emph{the complete (or topological) tensor product} of $M$ and $N$. For a more exhaustive treatment of the construction of this tensor product over a single commutative ring we refer to \cite{MR0163908}.

\begin{theorem}\label{th:adjunction}
Let $R$, $S$ and $T$ be filtered algebras. For every complete bimodules ${}_SM_R$, ${}_RN_T$, ${}_SP_T$ we have  a filtered isomorphism, whence an homeomorphism as linear topological spaces,
\begin{equation*}
\CHom{S,\,R}{M}{\CHom{-,\,T}{N}{P}}\cong \CHom{S,\,T}{\what{M\tensor{R}N}}{P}
\end{equation*}
which is natural in $M$ and $P$ and where $\what{M\tensor{R}N}$ is the completion of the filtered tensor product $M\tensor{R}N$.
\end{theorem}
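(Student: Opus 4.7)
The plan is to construct the filtered isomorphism by composing two natural bijections: the classical hom-tensor adjunction for bimodules (suitably restricted to the filtered setting) and the universal property of the completion functor established in Remark \ref{rem:counit}.

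First I would verify the filtered version of the hom-tensor adjunction: for any filtered $(S,R)$-, $(R,T)$- and $(S,T)$-bimodules $M$, $N$, $P$, the classical natural bijection $f\leftrightarrow g$ with $g(m)(n) = f(m\tensor{R}n)$ restricts to a filtered isomorphism
\[
\FHom{S,T}{M\tensor{R}N}{P} \cong \FHom{S,R}{M}{\FHom{-,T}{N}{P}},
\]
where $M\tensor{R}N$ carries the filtration of \eqref{eq:filtrations} and the right-hand $\mathsf{Hom}$'s carry the filtration recalled in Example \ref{ex:HomFiltr}. The crucial observation is that $f\big(\cF_n(M\tensor{R}N)\big)\subseteq F_nP$ unfolds, via the description $\cF_n(M\tensor{R}N) = \sum_{p+q=n}\Img{F_pM\tensor{R}F_qN}$, into the condition that for every $p+q=n$ and every $x\in F_pM$ one has $g(x)(F_qN)\subseteq F_nP$, which is exactly the condition $g(F_pM)\subseteq F_p\FHom{-,T}{N}{P}$ for all $p\leq n$. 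Hence the bijection is strict with respect to the given filtrations, i.e.\ a filtered isomorphism.

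Next I would apply the universal property \eqref{eq:univprophat} of the completion, specialising to the case where the target $P$ is already complete: this yields
\[
\CHom{S,T}{\what{M\tensor{R}N}}{P} \cong \FHom{S,T}{M\tensor{R}N}{\mathscr{U}(P)},
\]
naturally in $M$ and $P$, and this bijection is a filtered isomorphism thanks to the fact that $\gamma_{\Sscript{M\tensor{R}N}}$ is filtered and, by the description \eqref{eq:Mhatfiltration}, induces matching filtrations on the two $\mathsf{Hom}$-spaces. Composing with the filtered adjunction of the previous paragraph, and using that $N$ and $P$ are complete to identify $\FHom{-,T}{N}{\mathscr{U}(P)}$ with $\CHom{-,T}{N}{P}$ (together with the fact that the hom-bimodule into a complete bimodule is itself complete when equipped with the filtration of Example~\ref{ex:HomFiltr}, which one checks directly by writing down the limiting cone), one obtains the desired natural filtered isomorphism.

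Naturality in $M$ and $P$ is immediate, being inherited from the naturality of both building blocks. The main obstacle I expect to encounter is the verification that the filtration on $\FHom{-,T}{N}{P}$ induced as a subbimodule of the bimodule of morphisms of finite degree is itself complete when $N$ and $P$ are, since this is what makes the right-hand side of the statement live in $\CBim{S}{R}$ and lets the iterated $\CHom{}{}{}$-construction close up properly. This boils down to exhibiting $\FHom{-,T}{N}{P}$ as the projective limit of the $\FHom{-,T}{N}{P/F_nP}$, which in turn relies on completeness of $P$ and on the compatibility between the filtration on $N$ and the decreasing filtration \eqref{eq:FiltrHom}.
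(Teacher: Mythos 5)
Your proposal is correct and follows essentially the same route as the paper: the paper's map $\psiup(f)=\sigma_{\Sscript{P}}\circ\what{\psi(f)}$ is exactly your composite of the (strict) filtered hom-tensor adjunction with the universal property of the completion, and the key verification in both arguments is the same closedness of the $F_kP$'s in the complete module $P$. The one point you flag as a potential obstacle --- completeness of $\FHom{-,T}{N}{P}$ --- is indeed needed for the statement to make sense, but it is already supplied by the lemma establishing the functor \eqref{eq:homfunct} and is not re-proved inside the paper's argument.
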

\begin{proof}
The usual Hom-tensor adjunction for bimodules tells us that we have a pair of natural isomorphisms of abelian groups:
\begin{gather*}
\varfun{\psi}{\hom{S,\,R}{M}{\hom{-,\,T}{N}{P}}}{\hom{S,\,T}{M\tensor{R}N}{P}}, \quad \Big(  f \longmapsto \big[ x\tensor{R}y \mapsto f(x)(y) \big] \Big) \\
\varfun{\phi}{\hom{S,\,T}{M\tensor{R}N}{P}}{\hom{S,R}{M}{\hom{-,\,T}{N}{P}}}, \quad \Big(  g \longmapsto \big[ x \mapsto [ y \mapsto g(x\tensor{R}y)] \big] \Big).
\end{gather*}
It is easy to see that if $f\in \CHom{S,\,R}{M}{\CHom{-,\,T}{N}{P}}$, then $\psi(f)$ is filtered.
Therefore we can further associate to $\psi(f)$ the (unique) morphism $\sigma_P\circ\what{\psi(f)}$ and the assignment
\begin{equation*}
f \longmapsto \Big[\varfun{\big(\sigma_P\circ\what{\psi(f)}\big)}{\what{M\tensor{R}N}}{ P}\Big]
\end{equation*}
leads to a well-defined map
\begin{equation*}
\varfun{\psiup}{\CHom{S,\,R}{M}{\CHom{-,\,T}{N}{P}}}{\CHom{S,\,T}{\what{M\tensor{R}N}}{P}}.
\end{equation*}
Explicitly, for all $f\in \CHom{S,\,R}{M}{\CHom{-,\,T}{N}{P}}$ and all $\what{(x\tensor{R}y)}_{\infty} \in  \what{M\tensor{R} N}$
$$
\psiup(f)\Big(  \what{(x\tensor{R}y)}_{\infty} \Big) \,=\, \what{f(x)(y)}_{\infty}\,=\,
\underset{k\to \infty}{\lim}\Big(f(x_{k})(y_{k})\Big),
$$
(finite summations in the tensor product are understood and notation \eqref{eq:fhat} is used).

To check that it is filtered, let $f\in F_n\CHom{S,\,R}{M}{\CHom{-,\,T}{N}{P}}=\CHom{S,\,R}{M}{\CHom{-,\,T}{N}{P}[n]}$.
For all $l\geq 0$, if $\what{(x\tensor{R}y)}_{\infty} \in  F_l\left(\what{M\tensor{R} N}\right)$, then we may assume that $x_k\tensor{R}y_k=0$ for every $k\leq l$ and that $x_k\tensor{R}y_k\in \cF_l\left(M\tensor{R}N\right)$ for every $k>l$. Thus, the Cauchy sequence $\{ f(x_k)(y_k)\}_{ k \geq 0}$ lies in $F_{l+n}P$ as well as its limit, since this is a closed subset  in $P$. Therefore, for all $n\geq 0$ we have
$$
\psiup\left(F_n\CHom{S,\,R}{M}{\CHom{-,\,T}{N}{P}}\right)\subseteq F_n\CHom{S,\,T}{\what{M\tensor{R}N}}{P}.
$$
The other way around, let us check that
\begin{equation*}
\varfun{\phiup}{\CHom{S,\,T}{\what{M\tensor{R}N}}{P}}{\CHom{S,\,R}{M}{\CHom{-,\,T}{N}{P}}}
\end{equation*}
which assigns to every $g\in \CHom{S,\,T}{\what{M\tensor{R}N}}{P}$ the morphism
\begin{equation*}
\sfun{\phiup(g)}{M}{\CHom{-,\,T}{N}{P}}{x}{\left[y\mapsto g\left(\gamma_{M\tensor{R} N}\left(x\tensor{R} y\right)\right)\right]},
\end{equation*}
is a well-defined filtered map. To this end, set $\gamma=\gamma_{M\tensor{R} N}$ and
observe that for every  $h,k,n\in\N$ and for all $g\in F_n\CHom{S,\,T}{\what{M\tensor{R}N}}{P}=\CHom{S,\,T}{\what{M\tensor{R}N}}{P[n]}$ we have that
\begin{equation*}
\left(\phiup(g)(F_hM)\right)(F_kN)\subseteq g\left(\gamma\left(\Img{F_hM\tensor{R} F_kN}\right)\right)\subseteq g\left(\gamma\left(\cF_{h+k}\left(M\tensor{R}N\right)\right)\right)\subseteq g\left(F_{h+k}\left(\what{M\tensor{R} N}\right)\right) \subseteq F_{h+k+n}P.
\end{equation*}
For all $g\in \CHom{S,\,T}{\what{M\tensor{R}N}}{P}$, this proves at once that $\left(\phiup(g)(M)\right)(F_kN)\subseteq F_kP$ (take $n=0=h$), whence $\phiup(g)$ lands into $\CHom{-,\,T}{N}{P}$, that $\phiup(g)(F_hM)\subseteq \CHom{-,\,T}{N}{P[h]}$ (take $n=0$), whence $\phiup(g)$ is filtered, and also that $\phiup$ itself is filtered as well.
Since  $\phiup$ and $\psiup$ are mutually inverse functions, they establish a  filtered isomorphism, as claimed. 
\end{proof}

\begin{definition}\label{def:CTP}
In view of Theorem \ref{th:adjunction}, we say that $\what{M\tensor{R}N}$ is the \emph{complete} (or \emph{topological}) tensor product over the filtered algebra ${R}$ of the complete bimodules ${_SM_R}$ and ${_RN_T}$ as indicated. It is coherent with the notion of completeness introduced in \S\ref{ssec:CF}. As a matter of notation, we will write $M~\cmptens{R}N:=\what{M\tensor{R}N}$.
\end{definition}

Furthermore, Theorem \ref{th:adjunction} can be restated by saying that, for $_RN_T$ complete, the functor
\begin{equation*}
\sfun{-\cmptens{{R}}N}{\CBim{S}{R}}{\CBim{S}{T}}{M}{M\cmptens{{R}}N}
\end{equation*}
is left adjoint to the functor
\begin{equation*}
\sfun{\CHom{-,T}{N}{-}}{\CBim{S}{T}}{\CBim{S}{R}}{P}{\CHom{-,T}{N}{P}}.
\end{equation*}

\begin{remark}  Following Theorem \ref{th:adjunction}, it is reasonable to call this complete tensor product a \emph{topological} tensor product as it is the left adjoint to the \emph{continuous} $\mathsf{Hom}$ functor between complete bimodules. We point out however that our definition of a topological tensor product satisfies a different universal property with respect to, e.g., \cite[Definition 2.1]{Seal} or \cite[Theorem 20.1.2]{Semadeni}. Namely, assume that ${_SM_R},{_RN_T}, {_SP_T}$ are complete bimodules over filtered algebras as indicated. Endow $M\times N$ with the filtration $F_k(M\times N)=F_kM\times F_kN$. The induced linear topology coincides with the product linear topology, i.e., the coarsest linear topology for which the canonical projections are continuous, and $M\times N$ is a complete $(S,T)$-bimodule with respect to this filtration.  The canonical morphism $M\times N\to M\otimes_{\Sscript{R}} N$ maps $F_k(M\times N)$ into $\Img{F_kM \tensor{R} F_kN}\subseteq \cF_{2k}(M\tensor{R} N)\subseteq \cF_k(M\tensor{R}N)$, whence it is filtered (and continuous) and the same hold for the composition $\tau:=\left(M\times N\to M\tensor{R} N\to M\cmptens{R}N\right)$. Endow $M\times N$ with the bi-filtration $F_{h,k}(M\times N)=F_hM\times F_kN$  (for the definition of a bi-filtration see, e.g., \cite[\S X.2]{Borel}). We observe that $\tau$ is bi-filtered\footnote{By a bi-filtered morphism we mean a morphism  $f:M\times N\to P$ such that $f(F_hM\times F_kN)\subseteq F_{h+k}P$. In particular, if $f$ is bi-filtered then $f(-,n):M\to P$ and $f(m,-):N\to P$ are filtered for all $m\in M$ and $n\in N$. Bi-filtered morphisms can be seen as a counterpart of \emph{separately continuous} functions (for an account on the subject we refer the reader to \cite{Piotrowski}).} as well. The bijective correspondence between $R$-balanced $(S,T)$-bilinear morphisms ${_SM_R}\times{_RN_T}\to {_SP_T}$ and morphisms in $\Hom{S,R}{M}{\Hom{-,T}{N}{P}}$ restricts to a bijective correspondence between $R$-balanced $(S,T)$-bilinear bi-filtered morphisms $M\times N\to P$ and elements in $\CHom{S,R}{M}{\CHom{-,T}{N}{P}}$. From this it follows that the complete tensor product could be considered as a topological tensor product in the sense that it satisfies the following universal property: there exists a complete $(S,T)$-bimodule and a bi-filtered $R$-balanced $(S,T)$-bilinear morphism $\tau:M\times N\to M\cmptens{R}N$ such that for every other complete $(S,T)$-bimodule $P$ and every bi-filtered $R$-balanced $(S,T)$-bilinear morphism $f:M\times N\to P$ there exists a unique filtered $(S,T)$-bilinear morphism $\widetilde{f}:M\cmptens{R}N\to P$ such that $f=\widetilde{f}\circ\tau$.
\end{remark}

Given $M,N$ two filtered $R$-bimodules over a filtered algebra $R$, we have three (in principle, different) ways to obtain a complete $\what{R}$-bimodule from $M\tensor{R}N$. The first and more natural one is $\what{M\tensor{R} N}$: since $M\tensor{R} N$ is a filtered $R$-bimodule, Lemma \ref{rem:invCan} ensures that $\what{M\tensor{R} N}$ is a complete $\what{R}$-bimodule.\footnote{Note that the writing $M\cmptens{R}N$ doesn't make sense in this context, unless both $M$ and $N$ are complete, whereas $\what{M\tensor{R} N}$ does.} The other two come from the construction we performed in this subsection. Namely, they are $\what{M}\cmptens{R}\what{N}$ and $\what{M}\cmptens{\what{R}}\what{N}$, i.e., the complete tensor product of the complete $\what{R}$-bimodules $\what{M}$, $\what{N}$ over the filtered algebras $R$ and $\what{R}$.
It turns out, however, that the three constructions give rise to the same complete bimodule up to the isomorphism of the following proposition (cf. also Remark \ref{rem:superiso}).

\begin{proposition}\label{prop:coherentcompl}
Assume that ${_{\Sscript{S}}M_{\Sscript{R}}}$ and ${_{\Sscript{R}}N_{\Sscript{T}}}$ are two filtered bimodules over filtered algebras as denoted. Then we have a filtered isomorphism of $(S,T)$-bimodules $\what{M\tensor{R}N}\cong \what{M}\cmptens{\what{R}}\what{N}$, natural in both variables,
explicitly given by
\begin{gather}
\varphi_{M,N}: \what{M}\cmptens{\what{R}}\what{N} \longrightarrow \what{M\tensor{R}N}, \quad \left[\, \limn\Big( \limk(x_{k,n} )\tensor{\what{R}}\liml(y_{l,n})\Big) \longmapsto \limn\big( x_{n,n}\tensor{R}y_{n,n} \big) \,\right], \label{Eq:Phi} \\
\psi_{M,N}:  \what{M\tensor{R}N} \longrightarrow \what{M}\cmptens{\what{R}}\what{N} , \quad \left[\, \limn\Big( x_{n} \tensor{R}y_{n}\Big) \longmapsto \limn\big( \what{x_{n}}\tensor{\what{R}}\what{y_{n}} \big) \,\right]. \label{Eq:Psi}
\end{gather}
\end{proposition}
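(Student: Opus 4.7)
My plan is to construct $\psi_{M,N}$ and $\varphi_{M,N}$ separately via the universal properties of completion (Remark \ref{rem:counit}) and of the complete tensor product (Theorem \ref{th:adjunction}), verify that the explicit formulas follow from the construction, and then check that they are mutually inverse by a diagonal Cauchy sequence argument.

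For $\psi_{M,N}$, the idea is that since $\gamma_M\colon M\to \what{M}$ and $\gamma_N\colon N\to \what{N}$ are filtered, the canonical $R$-balanced map
\begin{equation*}
M\times N \longrightarrow \what{M}\tensor{\what{R}} \what{N} \xrightarrow{\gamma} \what{M}\cmptens{\what{R}}\what{N},\qquad (x,y)\longmapsto \gamma\bigl(\what{x}\tensor{\what{R}}\what{y}\bigr)
\end{equation*}
is bi-filtered, hence factors through a filtered $R$-balanced $(S,T)$-bilinear map $M\tensor{R} N\to \what{M}\cmptens{\what{R}}\what{N}$. Since $\what{M}\cmptens{\what{R}}\what{N}$ is complete, the universal property of the completion (diagram \eqref{eq:univprophat}) yields a unique filtered extension $\psi_{M,N}\colon \what{M\tensor{R} N}\to \what{M}\cmptens{\what{R}}\what{N}$. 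Continuity together with formula \eqref{eq:fhat} forces the explicit expression \eqref{Eq:Psi}.

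For $\varphi_{M,N}$, I would first define a map $\widetilde{\varphi}\colon \what{M}\times \what{N}\to \what{M\tensor{R} N}$ by the diagonal formula $\bigl(\what{x}_\infty,\what{y}_\infty\bigr)\mapsto \limn(x_n\tensor{R} y_n)$. Independence of representatives and $\what{R}$-balancedness follow from the identity $x_n\tensor{R} y_n-x_n'\tensor{R} y_n' = (x_n-x_n')\tensor{R} y_n + x_n'\tensor{R}(y_n-y_n')$, which lies in $\cF_n(M\tensor{R}N)$ whenever $x_n-x_n'\in F_nM$ and $y_n-y_n'\in F_nN$, together with the fact that the $R$- and $\what{R}$-balancing conditions coincide at finite level. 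Bi-filteredness holds because $\what{x}_\infty\in F_p\what{M}$ and $\what{y}_\infty\in F_q\what{N}$ allow representatives with $x_n\in F_pM$, $y_n\in F_qN$ for $n\geq p+q$, giving $x_n\tensor{R}y_n\in \cF_{p+q}(M\tensor{R}N)$. Thus $\widetilde{\varphi}$ induces a filtered morphism $\what{M}\tensor{\what{R}}\what{N}\to \what{M\tensor{R} N}$ which, by the universal property of completion applied to the complete bimodule $\what{M\tensor{R} N}$, extends to the desired $\varphi_{M,N}$, satisfying formula \eqref{Eq:Phi}.

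The verification that $\varphi_{M,N}\circ\psi_{M,N}=\id$ is immediate by continuity: on a typical Cauchy class $\limn(x_n\tensor{R}y_n)\in \what{M\tensor{R} N}$ the composition returns the diagonal of the constant sequences $\what{x_n}$, $\what{y_n}$, which is $\limn(x_n\tensor{R} y_n)$ again. The main obstacle, and the only step requiring some care, is the reverse identity $\psi_{M,N}\circ\varphi_{M,N}=\id$ on $\what{M}\cmptens{\what{R}}\what{N}$. Given a typical element $\limn\bigl(\what{x}_\infty^{(n)}\tensor{\what{R}}\what{y}_\infty^{(n)}\bigr)$ with $\what{x}_\infty^{(n)}=\limk(x_{k,n})$ and $\what{y}_\infty^{(n)}=\liml(y_{l,n})$, one computes
\begin{equation*}
(\psi_{M,N}\circ\varphi_{M,N})\Bigl(\limn\bigl(\what{x}_\infty^{(n)}\tensor{\what{R}}\what{y}_\infty^{(n)}\bigr)\Bigr) = \limn\bigl(\what{x_{n,n}}\tensor{\what{R}}\what{y_{n,n}}\bigr),
\end{equation*}
and the required equality reduces to showing that this coincides with $\limn\bigl(\what{x}_\infty^{(n)}\tensor{\what{R}}\what{y}_\infty^{(n)}\bigr)$ in the complete tensor product. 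For this, observe that the Cauchy condition on $\bigl\{x_{k,n}\bigr\}_k$ gives $\what{x}_\infty^{(n)}-\what{x_{n,n}}\in F_n\what{M}$, and analogously $\what{y}_\infty^{(n)}-\what{y_{n,n}}\in F_n\what{N}$; splitting the difference of the two tensors as $\bigl(\what{x}_\infty^{(n)}-\what{x_{n,n}}\bigr)\tensor{\what{R}}\what{y}_\infty^{(n)} + \what{x_{n,n}}\tensor{\what{R}}\bigl(\what{y}_\infty^{(n)}-\what{y_{n,n}}\bigr)$ places it in $F_n\bigl(\what{M}\cmptens{\what{R}}\what{N}\bigr)$, so the two Cauchy sequences define the same limit. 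Naturality in both variables is evident from the explicit formulas, completing the argument.
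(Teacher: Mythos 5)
Your argument is correct, and it supplies a complete proof of a statement the paper essentially leaves unproved: the paper records the isomorphism and, in the remark that follows, only indicates that $\psi_{M,N}$ is the inverse limit of the maps induced on the quotients $\frac{M\tensor{R}N}{\cF_n(M\tensor{R}N)}\to\frac{\what{M}\tensor{\what{R}}\what{N}}{\cF_n\big(\what{M}\tensor{\what{R}}\what{N}\big)}$ by $\gamma_{\Sscript{M}}\tensor{R}\gamma_{\Sscript{N}}$ and the base change along $\gamma_{\Sscript{R}}$. Your construction of $\psi_{M,N}$ via the universal property of the completion is the same map in different clothing, so there is no real divergence there. What you add beyond the paper's sketch is the explicit construction of $\varphi_{M,N}$ (from the bi-filtered diagonal map on $\what{M}\times\what{N}$, with the well-definedness, $\what{R}$-balancedness and bi-filteredness checks carried out at the level of representatives) and the verification that the two composites are the identity. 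The key computations are sound: the identity $x_n\tensor{R}y_n-x'_n\tensor{R}y'_n=(x_n-x'_n)\tensor{R}y_n+x'_n\tensor{R}(y_n-y'_n)\in\cF_n(M\tensor{R}N)$ handles independence of representatives, and the observation that $\what{x}^{(n)}_\infty-\what{x_{n,n}}\in F_n\what{M}$ (because the $n$-th component of the difference vanishes in $M/F_nM$) is exactly what is needed to identify $\limn\big(\what{x_{n,n}}\tensor{\what{R}}\what{y_{n,n}}\big)$ with $\limn\big(\what{x}^{(n)}_\infty\tensor{\what{R}}\what{y}^{(n)}_\infty\big)$ via the criterion of Remark \ref{rem:limits}. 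Two cosmetic points: in the bi-filteredness check for $\varphi$ one should note that a class in $F_p\what{M}$ admits representatives lying in $F_pM$ for \emph{all} indices (modify the finitely many initial terms), though only the $(p+q)$-th component matters for membership in $F_{p+q}\what{M\tensor{R}N}$; and the verifications are carried out on single elementary tensors, which is legitimate by additivity but deserves the usual "finite summations understood" caveat that the paper itself adopts.
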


\begin{remark}\label{rem:superiso}
Keeping assumptions and notations from Proposition \ref{prop:coherentcompl}, the algebra morphism $\gamma_R:R\to\what{R}$ induces a filtered $(S,T)$-bilinear morphism $\what{M}\tensor{R}\what{N}\to \what{M}\tensor{\what{R}}\what{N}$. Moreover, we can consider the filtered $(S,T)$-bilinear morphism $\gamma_M\tensor{R}\gamma_N:M\tensor{R}N\to \what{M}\tensor{R}\what{N}$. These induce the following composition
\begin{equation*}
\xymatrix @C=18pt{
\frac{M\tensor{R}N}{\cF_n\left(M\tensor{R}N\right)} \ar[rr]^-{\widetilde{\gamma_M\tensor{R}\gamma_N}} & & \frac{\what{M}\tensor{R}\what{N}}{\cF_n\left(\what{M}\tensor{R}\what{N}\right)} \ar[r] & \frac{\what{M}\tensor{\what{R}}\what{N}}{\cF_n\left(\what{M}\tensor{\what{R}}\what{N}\right)}
}
\end{equation*}
for all $n\geq 0$, which in turn induces exactly the morphism $\psi_{M,N}$ of the statement of Proposition \ref{prop:coherentcompl}.

Nevertheless, in what follows we will be concerned mainly with the complete tensor product over complete algebras. Thus, we decided to focus on the bare minimum to introduce the isomorphisms \eqref{Eq:Phi} and \eqref{Eq:Psi}. Furthermore, we will often omit these in the computations and we will identify $\what{M\tensor{R}N}$ with $\what{M}\cmptens{\what{R}}\what{N}$ as well, in order to simplify the exposition.
\end{remark}

As it happened for filtered algebras and bimodules, complete algebras and bimodules form a bicategory.

\begin{proposition}
We have a bicategory $\cB im_\K^{\mathsf{c}}$ which has complete algebras as $0$-cells and whose categories of $\{1,2\}$-cells are the categories of complete bimodules over complete algebras. The vertical compositions are given by the ordinary compositions of morphisms. The horizontal compositions are given by the composition functors $-\cmptens{B}-:=\what{(-)}\circ (-\tensor{B}-)$
$$-\cmptens{B}-:\CBim{A}{B}\times \CBim{B}{C}\to \CBim{A}{C}$$
for all complete algebras $A,B,C$. The constraints are induced by those of the bicategory $\cB im_\K^{\mathsf{flt}}$.
\end{proposition}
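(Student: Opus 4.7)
The plan is to reduce everything to the already-established bicategory structure of $\cB im_\K^{\mathsf{flt}}$ by exploiting that the completion functor $\what{(-)}$ behaves like a (lax) monoidal 2-functor, via the natural transformations $\varphi_{M,N}$ and $\psi_{M,N}$ of Proposition \ref{prop:coherentcompl}. First I would verify that the proposed horizontal composition is a well-defined functor landing in $\CBim{A}{C}$. Given $M\in\CBim{A}{B}$ and $N\in\CBim{B}{C}$, the tensor product $M\tensor{B}N$ is filtered with the canonical filtration \eqref{eq:filtrations}, so by Lemma \ref{rem:invCan} its completion $\what{M\tensor{B}N}$ is a complete $(\what{A},\what{C})$-bimodule; since $A$ and $C$ are already complete, restriction of scalars along $\gamma_A$ and $\gamma_C$ (which are filtered isomorphisms, hence invertible in the complete category) produces an object of $\CBim{A}{C}$. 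Functoriality in each argument is immediate from the functoriality of $-\tensor{B}-$ and $\what{(-)}$.

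Next I would construct the associators and unitors. For $M\in\CBim{A}{B}$, $N\in\CBim{B}{C}$, $P\in\CBim{C}{D}$, the filtered bicategory provides a natural filtered isomorphism $a^{\mathsf{flt}}_{M,N,P}\colon(M\tensor{B}N)\tensor{C}P\to M\tensor{B}(N\tensor{C}P)$. Applying $\what{(-)}$ yields a filtered isomorphism $\what{a^{\mathsf{flt}}_{M,N,P}}$, which I would precompose and postcompose with instances of the isomorphism $\psi$ of Proposition \ref{prop:coherentcompl} so as to obtain
\[
a_{M,N,P}\colon (M\cmptens{B}N)\cmptens{C}P \longrightarrow M\cmptens{B}(N\cmptens{C}P).
\]
Naturality in the three arguments follows from naturality of $a^{\mathsf{flt}}$ and of $\psi$. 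The unitors are built analogously from the filtered unit isomorphisms $A\tensor{A}M\cong M\cong M\tensor{B}B$ together with $\psi_{A,M}$ and $\psi_{M,B}$; here I use that $A$ is complete, so $\what{A}\cong A$ via $\gamma_A$, ensuring $A\cmptens{A}M\cong M$.

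For the coherence axioms (pentagon and triangle), the strategy is to reduce each to the corresponding axiom in $\cB im_\K^{\mathsf{flt}}$ by invoking the universal property of completion as recorded in \eqref{eq:univprophat}. Concretely, two filtered morphisms out of a completion $\what{X}$ into a complete bimodule agree as soon as they agree after precomposition with the canonical filtered morphism $\gamma_X\colon X\to\what{X}$. Hence the pentagon for the $a_{M,N,P}$ can be tested on elements of the form $\gamma(((m\otimes n)\otimes p)\otimes q)$; on such elements the identity reduces, through the naturality of $\psi$ (or equivalently $\varphi$) and the compatibility of completion with the filtered tensor product, to the pentagon for $a^{\mathsf{flt}}$, which is known. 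The triangle identity is analogous.

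The main obstacle I anticipate is bookkeeping: correctly identifying the three a priori distinct complete $(A,D)$-bimodules $\what{M\tensor{B}(N\tensor{C}P)}$, $\what{M}\cmptens{\what{B}}\what{N\tensor{C}P}$, and $M\cmptens{B}(N\cmptens{C}P)$, and checking that the chain of isomorphisms one threads through is the same regardless of how the parentheses are inserted. This is precisely the content of the naturality and the compatibility of $\psi_{M,N}$ in Proposition \ref{prop:coherentcompl} with iterated tensor products (a mild coherence statement for $\what{(-)}$ as a lax monoidal 2-functor from $\cB im_\K^{\mathsf{flt}}$ to $\cB im_\K^{\mathsf{c}}$), and once this coherence is made explicit the pentagon and triangle transfer automatically from the filtered case.
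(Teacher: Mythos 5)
Your proposal is correct and takes essentially the same route as the paper: there too the unit constraints are obtained from $\varphi_{A,-}$ and $\varphi_{-,B}$ together with the isomorphism $\sigma_{\Sscript{X}}\colon\what{\mathscr{U}(X)}\to X$, and the associator is defined as the unique map making diagram \eqref{eq:defalpha} commute, i.e., by conjugating $\what{a_{X,Y,Z}}$ with instances of $\psi_{-,-}$ exactly as you describe. The only difference is that you spell out the transfer of the pentagon and triangle from $\cB im_\K^{\mathsf{flt}}$ via the universal property of completion (testing after precomposition with $\gamma$), a verification the paper compresses into ``the coherence axioms are fulfilled by construction.''
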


\begin{proof}
In view of Proposition \ref{prop:catequiv}, the natural isomorphisms $\varphi_{-,-}$ and $\psi_{-,-}$ described in equations \eqref{Eq:Phi} and \eqref{Eq:Psi} can be regarded as isomorphisms of complete $\Big(\,\what{S}, \what{T}\,\Big)$-bimodules and for this reason we are going to denote them in the same way.
The left and right unit constraints (or identities, as they are called in \cite{Benabou}) are  deduced by using the natural isomorphisms $\varphi_{A,-}$ and $\varphi_{-,B}$ in conjunction with the natural isomorphism $\varfun{\sigma_{\Sscript{X}}}{\what{\mathscr{U}(X)}}{X}$ of  Remark \ref{rem:gamma} for $X$ an object in $\CBim{{A}}{{B}}$ and $A,B$ complete algebras (cf.~also Remark \ref{rem:counit}). The associativity constraint $\varfun{\alpha_{\Sscript{M,\,N,\,P}}}{\left(M\cmptens{\what{R}}N\right)\cmptens{\what{R}}P}{M\cmptens{\what{R}}\left(N\cmptens{\what{R}}P\right)}$ is obtained by observing that  for every ${}_{\Sscript{S}}X_{\Sscript{R}}$, ${}_{\Sscript{R}}Y_{\Sscript{T}}$ and ${}_{\Sscript{T}}Z_{\Sscript{K}}$ filtered bimodules, there is a unique $\Big(\,\what{S}, \what{K}\,\Big)$-bilinear map $\alpha_{\Sscript{\what{X},\,\what{Y},\,\what{Z}}}$ making commutative the following diagram

\begin{equation}\label{eq:defalpha}
\xymatrix @C=60pt @R=15pt{
\left(\what{X}\cmptens{\what{R}}\what{Y}\right)\cmptens{\what{T}}\what{Z} \ar@{.>}[d]_-{\alpha_{\what{X},\what{Y},\what{Z}}}  & \what{X\tensor{R} Y}\cmptens{\what{T}}\what{Z} \ar[l]_-{\psi_{X,\,Y}\cmptens{\what{T}}\what{Z}} & \what{\left(X\tensor{R}Y\right)\tensor{T}Z} \ar[d]^-{\what{a_{X,\,Y,\,Z}}} \ar[l]_-{\psi_{X\tensor{R}Y,\,Z}} \\
\what{X}\cmptens{\what{R}}\left(\what{Y}\cmptens{\what{T}}\what{Z}\right) & \what{X}\cmptens{\what{R}}\what{Y\tensor{T}Z} \ar[l]_-{\what{X}\cmptens{\what{R}}\psi_{Y,\,Z}} & \what{X\tensor{R}\left(Y\tensor{T}Z\right)} \ar[l]_-{\psi_{X,\,Y\tensor{T}Z}}
}
\end{equation}
where $a_{X,\,Y,\,Z}$ is the usual associativity constraint.
\end{proof}

It turns out then that the completion functor fits properly in the wider framework of bicategories.

\begin{theorem}\label{thm:Athm}
Let $\K$ be a commutative ground ring which we consider trivially filtered. Then the completion construction developed in this section induces a  2-functor
$$
\xymatrix@R=0pt{ & \cB im_{\Sscript{\K}}^\mathsf{flt}  \ar@{->}[rr] & &  \cB im_{\Sscript{\K}}^\mathsf{c} \\ \rm{0\text{-}cells} & R \ar@{|->}[rr] & & \what{R} \\  \rm{1\text{-}cells} & {}_{\Sscript{R}}M_{\Sscript{S}} \ar@{|->}[rr] & & {}_{\Sscript{\what{R}}}\what{M}_{\Sscript{\what{S}}} \\ \rm{2\text{-}cells} & \Big[f: M \to N \Big] \ar@{|->}[rr] & & \Big[\what{f}: \what{M} \to \what{N} \Big] }
$$
from the bicategory $\cB im_{\Sscript{\K}}^\mathsf{flt}$ of filtered algebras and filtered bimodules to the bicategory $\cB im_{\Sscript{\K}}^\mathsf{c}$ of complete algebras and complete bimodules.
\end{theorem}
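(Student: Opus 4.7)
The plan is to verify the three levels of data comprising a homomorphism of bicategories together with the associated coherence axioms; in fact, most of the work has already been done in the preceding subsections and the proof reduces to a careful assembly of the pieces.

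First I would specify the data. On $0$-cells the assignment $R \mapsto \what{R}$ lands in complete algebras by the discussion following Remark \ref{rem:gammafiltiso}. On hom-categories, for each pair $(R,S)$ of filtered algebras one defines the functor
\[ \what{(-)}_{R,S}\colon \FBim{R}{S} \longrightarrow \CBim{\what{R}}{\what{S}}, \qquad M \mapsto \what{M}, \qquad f \mapsto \what{f}, \]
which is well defined on objects by Lemma \ref{rem:invCan} and functorial ($\what{\id_M}=\id_{\what{M}}$ and $\what{g\circ f}=\what{g}\circ\what{f}$) either from the componentwise description $\what{f}=\prlimit{n}{\widetilde{f_n}}$ together with functoriality of $\prlimit{n}{(-)}$, or, equivalently, from the universal property recorded in diagram \eqref{eq:univprophat}.

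Next, I would supply the coherence $2$-cells. For horizontal composition, the natural filtered isomorphism $\psi_{M,N}\colon\what{M\tensor{S}N}\to \what{M}\cmptens{\what{S}}\what{N}$ of Proposition \ref{prop:coherentcompl} serves as the \emph{compositor} of the $2$-functor; its naturality in both variables gives the required invertible modification between the two composite functors $\FBim{R}{S}\times\FBim{S}{T}\to \CBim{\what{R}}{\what{T}}$. For the identities, the image of the identity $1$-cell on $R$ (which is $R$ itself as an $(R,R)$-bimodule) is literally $\what{R}$, which is the identity $1$-cell on the $0$-cell $\what{R}$; thus the \emph{unitor} of the $2$-functor can be taken to be the identity $2$-cell $\id_{\what{R}}$.

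It then remains to verify the pentagon and triangle coherence axioms. Given filtered bimodules ${}_{R}M_{S}$, ${}_{S}N_{T}$, ${}_{T}P_{U}$, the pentagon asks for the equality of the two $2$-cells from $\what{(M\tensor{S}N)\tensor{T}P}$ to $\what{M}\cmptens{\what{S}}\bigl(\what{N}\cmptens{\what{T}}\what{P}\bigr)$ obtained by applying the associator $a$ of $\cB im_{\K}^{\mathsf{flt}}$ first and then the compositors $\psi$, versus applying the compositors first and then the associator $\alpha$ of $\cB im_{\K}^{\mathsf{c}}$. This is precisely the commutativity in diagram \eqref{eq:defalpha} that \emph{defines} $\alpha$, so the pentagon holds by construction; a direct check on a generic element $\limn\bigl(x_n\otimes y_n\otimes z_n\bigr)$ using the explicit formula \eqref{Eq:Psi} for $\psi$ confirms this. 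The triangles reduce analogously to the compatibility of the unit constraints of $\cB im_{\K}^{\mathsf{c}}$ (defined using $\varphi_{A,-}$, $\varphi_{-,B}$ and $\sigma$, as recalled after Proposition \ref{prop:coherentcompl}) with the images under $\what{(-)}$ of the unit constraints of $\cB im_{\K}^{\mathsf{flt}}$, and follow from the naturality of $\psi$ together with the fact that $\sigma$ is, by Remark \ref{rem:gammafiltiso}, a filtered inverse of $\gamma$.

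The only genuinely delicate point is that all these natural isomorphisms must be simultaneously filtered, continuous and compatible with the $\what{R}$-bimodule structures; this has already been secured by Proposition \ref{prop:coherentcompl} and by the very definition of $\alpha$ through diagram \eqref{eq:defalpha}. The main obstacle is therefore the purely bookkeeping task of writing down the coherence pentagon and triangles unambiguously, so as to make visible that each face either is a naturality square for $\psi$, or is the defining square of $\alpha$, or is an identity coming from $\sigma\gamma=\id$.
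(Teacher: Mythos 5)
Your proposal is correct and follows essentially the same route as the paper: the data are assembled from the completion functors on hom-categories (diagram \eqref{Eq:dash}), the compositor is the natural isomorphism of Proposition \ref{prop:coherentcompl}, and the coherence axioms hold by construction because the associativity constraint $\alpha$ of $\cB im_{\Sscript{\K}}^{\mathsf{c}}$ is \emph{defined} by the commutativity of diagram \eqref{eq:defalpha}. The only cosmetic difference is that you call the main coherence condition a ``pentagon'' where (for a morphism of bicategories) it is the hexagon of \cite[Definition 4.1]{Benabou}; the content of your verification is nonetheless the right one.
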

\begin{proof}
The construction of the stated 2-functor at the level of 0-cells is clear. At the level of  $\{\text{1,2}\}$-cells, the needed family of functors is given by the functors exhibited in diagram \eqref{Eq:dash}, precisely by the lower diagonal one. The required natural transformations for $\what{(-)}$ are given in Proposition \ref{prop:coherentcompl}.
Finally, the coherence axioms (i.e., the hexagons and the squares in \cite[Definition 4.1]{Benabou}) are fulfilled by construction.
\end{proof}

\begin{corollary}\label{prop:moncatcomplbimod}
Let $R$ be a filtered algebra. Then the category of complete $\what{R}$-bimodules $\CBim{\what{R}}{\what{R}}$ is monoidal with tensor product the topological tensor product $-\cmptens{\what{R}}-$ and with unit the completion algebra $\what{R}$ of $R$. Moreover, the completion functor $\varfun{\what{\left(-\right)}}{\FBim{R}{R}}{\CBim{\what{R}}{\what{R}}}$ is a monoidal functor.
\end{corollary}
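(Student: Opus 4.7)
The entire statement is essentially a corollary of Theorem \ref{thm:Athm}: my plan is to recognize the claim as the restriction of the completion 2-functor to the endomorphism hom-categories of the 0-cells $R$ and $\what R$, using the standard principle that a 2-functor between bicategories restricts on such endomorphism hom-categories to a monoidal functor between monoidal categories.

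First, I would establish the monoidal structure on $\CBim{\what R}{\what R}$ by invoking the general fact that, in any bicategory, for a 0-cell $X$ the hom-category $\mathrm{Hom}(X,X)$ is canonically monoidal with tensor product the horizontal composition, unit the identity 1-cell at $X$, and associator and unit constraints those of the bicategory. Applied to the 0-cell $\what R$ inside the bicategory $\cB im_{\K}^{\mathsf{c}}$ constructed just before Theorem \ref{thm:Athm}, this yields exactly that $\CBim{\what R}{\what R}$ is monoidal with $-\cmptens{\what R}-$ (the horizontal composition in $\cB im_{\K}^{\mathsf{c}}$ by definition) and unit object $\what R$ viewed as a bimodule over itself (the identity 1-cell at $\what R$). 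The associator is precisely the restriction of $\alpha_{-,-,-}$ of diagram \eqref{eq:defalpha}, and the unit constraints arise from the left and right identities of the bicategory.

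For the monoidal functoriality of $\what{(-)}$ I would again invoke the same principle: the 2-functor $\what{(-)}:\cB im_{\K}^{\mathsf{flt}}\to\cB im_{\K}^{\mathsf{c}}$ sends $R$ to $\what R$, and hence restricts on endomorphism hom-categories to the functor $\FBim{R}{R}\to\CBim{\what R}{\what R}$ of the statement. The binary monoidal constraint is provided by the natural isomorphism $\psi_{M,N}:\what{M\tensor{R}N}\to\what{M}\cmptens{\what R}\what{N}$ of Proposition \ref{prop:coherentcompl}, which is by construction the natural transformation packaged into the 2-functor $\what{(-)}$; the unit constraint is simply $\id_{\what R}\colon \what R \to \what R$, since the monoidal unit of $\FBim{R}{R}$ is $R$ and $\what{R}$ is on the nose the monoidal unit of $\CBim{\what R}{\what R}$. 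The hexagon and triangle axioms for a monoidal functor then reduce to the bicategorical coherence hexagons and squares of \cite[Definition 4.1]{Benabou} already verified in the proof of Theorem \ref{thm:Athm}, since when read on a single 0-cell they become literally the monoidal-functor coherence axioms.

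The only minor verification I expect to carry out is the canonical identification of the identity 1-cell at $\what R$ in $\cB im_{\K}^{\mathsf{c}}$ with $\what R$ itself as an $\what R$-bimodule, which amounts to exhibiting natural filtered isomorphisms $\what R\cmptens{\what R}M\cong M\cong M\cmptens{\what R}\what R$ for every complete $\what R$-bimodule $M$; this can be produced by composing $\varphi_{\what R,-}$ (respectively $\varphi_{-,\what R}$) of \eqref{Eq:Phi} with $\sigma_{(-)}$ from Remark \ref{rem:gamma}, and I foresee no genuine obstacle, since this identification was implicitly used already in the proof of Theorem \ref{thm:Athm}. In summary, no nontrivial computation is required beyond packaging Theorem \ref{thm:Athm} and Proposition \ref{prop:coherentcompl} in the one-object bicategorical dictionary.
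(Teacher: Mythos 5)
Your proposal is correct and matches the paper's intended argument: the corollary is stated without a separate proof precisely because it is the restriction of the 2-functor of Theorem \ref{thm:Athm} to the endomorphism hom-categories at the 0-cells $R$ and $\what{R}$, with the monoidal constraints given by $\psi_{M,N}$ from Proposition \ref{prop:coherentcompl} and the unit identifications built from $\varphi_{\what{R},-}$, $\varphi_{-,\what{R}}$ and $\sigma_{(-)}$ exactly as you describe. No further comment is needed.
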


\begin{remark}\label{rem:calg}
It follows from Corollary \ref{prop:moncatcomplbimod} that $\left(\rmod{\K}^{\mathsf{c}},\what{\otimes},\K\right)$ is a monoidal category.  It can be checked that for a filtered algebra $(R,\mu,\eta)$, $R$ is a complete module (i.e., a complete algebra) if and only if $\left(R,\sigma_{\Sscript{R}}\,\what{\mu},\eta\right)$ is a monoid in the monoidal category $\rmod{\K}^{\mathsf{c}}$. A similar thing happens for complete bimodules over complete algebras.
In fact, up to an equivalence of categories, we may regard complete algebras as monoids in the monoidal category $\rmod{\K}^{\mathsf{c}}$ and complete bimodules over complete algebras $A$ and $B$ as objects in ${{}_{\Sscript{A}}\left(\rmod{\K}^{\mathsf{c}}\right){}_{\Sscript{B}}}$ and conversely (in accordance with \cite[\S A.1]{MR1320989}, for example). Furthermore, we point out that it could possible to deduce Theorem \ref{thm:Athm} from a more general framework as claimed in \cite[Examples 2.2 and 6.2]{shulman}, once proven that $\left(\rmod{\K}^{\mathsf{flt}},\otimes,\K\right)$ and $\left(\rmod{\K}^{\mathsf{c}},\what{\otimes},\K\right)$ are monoidal categories and that $\what{(-)}:\rmod{\K}^{\mathsf{flt}}\to \rmod{\K}^{\mathsf{c}}$ is a monoidal functor.
\end{remark}

\section{Topological tensor product of linear duals of locally finitely generated and projective filtered bimodules}\label{sec:TGFr}

In this appendix we plan to study the linear dual of the tensor product of two locally finitely generated and projective filtered modules (for instance, rings with an admissible filtration as in \S\ref{ssec:FUstra}). In particular, we will show that this bimodule is homeomorphic to the topological tensor product of the duals.

\subsection{Locally finitely generated and projective filtered modules}\label{ssec:LFGr}
Let $R$ be a ring and $M$ a right $R$-module endowed with an \emph{ascending filtration} $\left\{F^nM\mid n\in\N\right\}$. This is said to be \emph{exhaustive} if $\bigcup_{n\geq 0}F^nM=M$. In view of our aims, we assume $R$ trivially filtered. We denote by $\gr^n\left(M\right)$ the quotient module $F^nM/F^{n-1}M$ for all $n\geq 0$ ($F^{-1}M=0$ by convention), and by $\gr\left(M\right)$ the associated graded module $\gr\left(M\right)=\bigoplus_{n\geq 0}\gr^n\left(M\right)$. Henceforth and in line with Appendix \ref{sec:CBCF}, we denote increasing filtrations with upper indices and decreasing ones with lower indices. Moreover, $\tau_{\Sscript{m,\,n}}:F^nM\to F^mM$ and $\tau_{\Sscript{n}}:F^nM\to M$ for all $m\geq n\geq 0$ will denote the canonical inclusions.

\begin{lemma}\label{lemma:fgpquotients}
Let $R$ be any ring, $M$ a right $R$-module endowed with an ascending filtration $\left\{F^kM\mid k\in\N\right\}$ and let $n\in\N$. If the quotient modules $F^kM/F^{k-1}M$ are projective right $R$-modules for all $0\leq k\leq n$, then $F^nM\cong \gr\left(F^nM\right)$ as filtered modules. In particular, $F^nM$ is projective. If moreover the quotient modules $F^kM/F^{k-1}M$ are finitely generated for $0\leq k\leq n$, then $F^nM$ is finitely generated as well. Finally, if the filtration is exhaustive and the quotient modules $F^nM/F^{n-1}M$ are projective for all $n\in\N$, then there exists an isomorphism of filtered modules $M\cong \gr(M)$ and $M_{\Sscript{R}}$ itself is projective.
\end{lemma}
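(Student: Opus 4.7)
The plan is to prove the statement by induction on $n$, leveraging the projectivity of each $\gr^k(F^nM)=F^kM/F^{k-1}M$ to split the natural filtration of $F^nM$ step by step, and then to pass to the direct limit in order to handle the case of an exhaustive filtration.

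First I would settle the base case $n=0$: since $F^{-1}M=0$ by convention, $F^0M=\gr^0(F^0M)$ and the statement is trivial. For the inductive step, assuming $F^{n-1}M\cong \gr(F^{n-1}M)$ as filtered modules, I would consider the short exact sequence
\begin{equation*}
0\longrightarrow F^{n-1}M\stackrel{\tau_{n-1,n}}{\longrightarrow} F^nM\stackrel{\pi_n}{\longrightarrow} \gr^n(F^nM)\longrightarrow 0
\end{equation*}
of right $R$-modules. Since $\gr^n(F^nM)=F^nM/F^{n-1}M$ is projective by hypothesis, this sequence splits: pick a section $\iota_n\colon\gr^n(F^nM)\to F^nM$ of $\pi_n$. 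Combining with the inductive hypothesis gives a right $R$-linear isomorphism
\begin{equation*}
F^nM\cong F^{n-1}M\oplus \gr^n(F^nM)\cong\bigoplus_{k=0}^{n}\gr^k(F^nM)=\gr(F^nM),
\end{equation*}
and one checks it is filtered (with inverse filtered too) because the inductive isomorphism is filtered and $\iota_n$ lands in $F^nM$ while every element of $F^{k}(\gr(F^nM))=\bigoplus_{j=0}^{k}\gr^j(F^nM)$ for $k<n$ is sent to $F^{n-1}M\cap F^kM=F^kM$. As a direct sum of projective modules, $F^nM$ is projective; if every $\gr^k(F^nM)$ is also finitely generated, then $F^nM$ is a finite direct sum of finitely generated modules, hence finitely generated.

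For the final assertion, assume the filtration is exhaustive and every $\gr^n(M)$ is projective. I would construct the sections $\iota_n$ inductively in a compatible way so that the induced isomorphisms $\varphi_n\colon F^nM\to\bigoplus_{k=0}^{n}\gr^k(M)$ satisfy $\varphi_{n+1}\circ\tau_{n,n+1}=j_{n,n+1}\circ\varphi_n$, where $j_{n,n+1}$ is the canonical inclusion of direct sums. Taking the direct limit, since $M=\bigcup_{n\geq 0}F^nM=\varinjlim F^nM$ by exhaustiveness and $\gr(M)=\bigoplus_{n\geq 0}\gr^n(M)=\varinjlim\bigoplus_{k=0}^{n}\gr^k(M)$, we obtain a filtered isomorphism $M\cong\gr(M)$. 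Projectivity of $M$ then follows from the fact that a direct sum of projective modules is projective.

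The only genuinely delicate point is making sure the filtration is preserved by the isomorphism at each stage and that the sections $\iota_n$ chosen at different levels are compatible enough to glue into a map on the direct limit; this is routine once one observes that any lift of $\iota_n$ through $\tau_{n,n+1}$ can be absorbed into the choice of $\iota_{n+1}$, so no obstruction arises.
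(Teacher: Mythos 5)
Your proof is correct and follows essentially the same route as the paper: split the short exact sequence $0\to F^{n-1}M\to F^nM\to F^nM/F^{n-1}M\to 0$ using projectivity of the top quotient, iterate to get $F^nM\cong\gr(F^nM)$, check the filtration is preserved, and pass to the direct limit for the exhaustive case. Your extra care about choosing the sections $\iota_n$ compatibly so that the isomorphisms form a morphism of direct systems is a detail the paper leaves implicit, but it is the same argument.
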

\begin{proof}
Since every quotient module $F^kM/F^{k-1}M$ is projective as right $R$-module, for all $0\leq k\leq n$, we have a split exact sequence of right $R$-modules
\begin{equation*}
\xymatrix{
0 \ar[r] & F^{n-1}M \ar@<+0.5ex>[r]^-{\tau_{\Sscript{n-1,\,n}}} & F^{n}M \ar@<+0.5ex>[r]^-{} \ar@{.>}@<+0.5ex>[l]^-{} & \left(F^nM/F^{n-1}M\right) \ar[r]\ar@{.>}@<+0.5ex>[l]^-{} & 0
}
\end{equation*}
from which it follows that, as right $R$-modules,
\begin{equation*}
F^nM\cong F^{n-1}M\oplus \left(F^nM/F^{n-1}M\right).
\end{equation*}
Proceeding inductively, we have that
\begin{equation}\label{Eq:isoFn}
F^nM\cong \bigoplus_{k=0}^n \frac{F^kM}{F^{k-1}M}=\gr\left(F^nM\right).
\end{equation}
Observing that for all $m\leq n$, $F^m\gr\left(F^nM\right)=\bigoplus_{k=0}^m F^kM/F^{k-1}M=\gr\left(F^mM\right)$ and $F^mF^nM=F^mM$, it is clear that the isomorphism preserves the filtrations as claimed. Moreover, as direct sum of projective right $R$-modules, $F^nM$ is projective as well.

The second claim is clear, as the direct sum is finite. About the last claim in the statement, saying that the filtration is exhaustive means that $M\cong \injlimit{n}{F^nM}$ as filtered modules. Since $F^nM\cong\gr\left(F^nM\right)\cong F^n\left(\gr(M)\right)$ as filtered modules, we have that $M\cong\injlimit{n}{F^nM}\cong \injlimit{n}{F^n\left(\gr(M)\right)}\cong\gr(M)$ as claimed.
As direct sum of projective right $R$-modules, $M$ is itself projective.
\end{proof}

Henceforth, all ascending filtrations will be exhaustive. In analogy with \cite[\S4]{Cartan:1958}, we will say that an increasingly filtered right $R$-module $M$ such that the quotient modules $F^nM/F^{n-1}M$ are finitely generated and projective is a \emph{locally finitely generated and projective} (filtered) module. 

\subsection{The  topology on the linear dual of a locally finitely generated and projective filtered bimodule}\label{ssec:DLF}

Assume that we are given an increasingly filtered $R$-bimodule $M$ which is locally finitely generated and projective as a filtered right $R$-module (the definition of an increasingly filtered bimodule can be easily obtained by dualizing that for decreasingly filtered bimodules in Appendix \ref{sec:CBCF}). In particular, this means that each member of the increasing filtration $\{F^nM\mid n\in\N\}$ is actually an $R$-subbimodule with a monomorphism $\taun:F^nM \to M$ and that the factors $F^nM/F^{n-1}M$ are finitely generated and projective right $R$-modules.

Since the filtration $\left\{F^nM\mid n\in\N\right\}$ is exhaustive, we may identify the right $R$-module ${M_{\Sscript{R}}}$ with the inductive limit $M = \injlimit{n}{F^nM}$ of the system $\left\{F^nM,\tau_{\Sscript{n,\,n+1}}\right\}_{n\in\N}$. Therefore, $M^*=\Hom{-,A}{M}{A}\cong \prlimit{n}{F^nM^*}$ as a left $R$-module via the left $R$-linear isomorphism
\begin{equation}\label{Eq:D}
M^* \rightarrow \prlimit{n}{F^nM^*}, \quad \Big( f \mapsto (\taun^*(f))_{\Sscript{n \geq 0}}\Big); \qquad \prlimit{n}{F^nM^*} \rightarrow M^*, \quad \Big( (g_{\Sscript{n}})_{\Sscript{n \geq 0}} \mapsto g:=\injlimit{n}{g_{\Sscript{n}}} \Big)
\end{equation}
where $(r\cdot f)(x)=rf(x)$ for all $f\in M^*$, $r\in R$ and $x\in M$. However, $M^*$ is also a right $R$-module with $\left(f\leftharpoonup x\right)(m)=f(x\cdot m)$ for all $f\in M^*$, $m\in M$ and $x\in R$, and it turns out that the isomorphism \eqref{Eq:D} is right $R$-linear as well. Therefore, $M^*\cong \prlimit{n}{F^nM^*}$ as $R$-bimodules.
Notice that $g:M\to R$ is the unique right $R$-linear map that extends all the $g_{\Sscript{n}}$'s at the same time, that is $g \, \tau_{\Sscript{n}}= g_{\Sscript{n}}$ for all $n\geq 0$.

\begin{corollary}\label{coro:FnL}
Let $M$ be an increasingly filtered $R$-bimodule which is locally finitely generated and projective as right $R$-module. The following properties hold true.
\begin{enumerate}[label=(\roman*),leftmargin=1cm]
\item Each of the subbimodules  $F^nM$ is a finitely generated and projective right $R$-module and each of the structural maps $\tau_{\Sscript{n,\,n+1}}: F^nM \to F^{n+1}M$ is a split monomorphism of right $R$-modules. Moreover, the transposes $\tau_{\Sscript{n}}^*:M^*\to F^nM^*$ are surjective, so that for all $n\in\N$, $\tau_{\Sscript{n}}$ is a split monomorphism too.
\item For every $m,n \geq 0$, we have an isomorphism of $R$-bimodules
\begin{equation*}
\phi_{\Sscript{m,\, n}}: \,\left(F^mM^*\right)^{}_{\Sscript{R}}\tensor{R} {^{}_{\Sscript{R}}\left(F^nM^*\right)} \cong \left(F^nM_{\Sscript{R}}\tensor{R} {{}_{\Sscript{R}}F^mM}\right)^*
\end{equation*}
such that $\phi_{\Sscript{m,\, n}}\left(f\tensor{R}g\right)\left(x\tensor{R}y\right)=f\left(g(x)y\right)$ for all $x \in F^nM$, $y\in F^mM$, $f\in F^mM^*$ and $g \in F^nM^*$.
\end{enumerate}
\end{corollary}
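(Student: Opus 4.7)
The plan is to treat the two parts separately, with (i) being a rather quick consequence of Lemma \ref{lemma:fgpquotients} and (ii) being an adjunction-style argument exploiting part (i).

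For (i), I would invoke Lemma \ref{lemma:fgpquotients} in two stages. Since $M$ is locally finitely generated and projective as a right $R$-module, each successive quotient $F^kM/F^{k-1}M$ is finitely generated and projective. The first part of that Lemma then yields, by induction, that $F^nM$ is a finitely generated and projective right $R$-module and, simultaneously, that each $\tau_{n,n+1}$ is a split monomorphism (being the inclusion of the $n$-th summand in the decomposition $F^{n+1}M\cong F^nM\oplus (F^{n+1}M/F^nM)$). Because the filtration is exhaustive, the last assertion of the same Lemma provides a filtered isomorphism $M\cong \gr(M)=\bigoplus_{k\ge 0} F^kM/F^{k-1}M$ under which $F^nM$ becomes the partial direct sum $\bigoplus_{k=0}^{n} F^kM/F^{k-1}M$. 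The canonical projection onto these first $n+1$ summands produces a right $R$-linear retraction $\theta_n\colon M\to F^nM$ of $\tau_n$, which makes $\tau_n$ a split monomorphism and its transpose $\tau_n^*\colon M^*\to F^nM^*$ a split epimorphism, hence in particular surjective.

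For (ii), I would first check that the assignment $\phi_{m,n}(f\tensor{R}g)(x\tensor{R}y):=f(g(x)y)$ defines a map which is $R$-balanced in the variables $f,g$ and produces a right $R$-linear map $F^nM\tensor{R}F^mM\to R$; a direct verification then shows that $\phi_{m,n}$ is an $R$-bimodule homomorphism for the natural bimodule structures on both sides (left action inherited from the codomain $R$, right action inherited from the left $R$-action on $F^nM$ respectively on $F^nM\tensor{R}F^mM$). To exhibit an inverse I would exploit that, by (i), $F^nM$ is a finitely generated and projective right $R$-module, and fix a dual basis $\{(u_i,u_i^*)\}_{i=1}^k$ with $x=\sum_i u_i\,u_i^*(x)$ for all $x\in F^nM$. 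Then the formula
\begin{equation*}
\phi_{m,n}^{-1}(\psi):=\sum_{i=1}^{k}\psi(u_i\tensor{R}-)\tensor{R}u_i^*
\end{equation*}
makes sense in $F^mM^*\tensor{R}F^nM^*$, because right $R$-linearity of $\psi$ on the right tensorand makes $\psi(u_i\tensor{R}-)$ a right $R$-linear map $F^mM\to R$. A computation using the balancedness relation $u_i\tensor{R}u_i^*(x)y=u_iu_i^*(x)\tensor{R}y$ shows $\phi_{m,n}\circ\phi_{m,n}^{-1}=\id$, while the identity $\phi_{m,n}^{-1}\circ\phi_{m,n}=\id$ follows by rewriting $\sum_i g(u_i)u_i^*(x)=g(x)$ (using right $R$-linearity of $g\in F^nM^*$) and using the balancedness of $\tensor{R}$ to move the scalar $g(u_i)\in R$ across the tensor.

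The routine but delicate point, and the one I would handle with most care, is the bookkeeping of the various $R$-actions and the balancedness relations for $\phi_{m,n}$ and $\phi_{m,n}^{-1}$, since $R$ is not assumed commutative: the left $R$-action on $F^nM^*$ and the right $R$-action on $F^mM^*$ interact through the two-sided actions on $F^nM$ and $F^mM$, and matching these via $xr\tensor{R}y=x\tensor{R}ry$ is what makes the proposed inverse land in the correct tensor product and compose correctly with $\phi_{m,n}$.
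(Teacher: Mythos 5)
Your proposal is correct, and part (ii) together with the first claim of part (i) follow essentially the paper's lines: the paper obtains $\phi_{m,n}$ by composing the standard isomorphism $\left(F^mM\right)^*\tensor{R}\left(F^nM\right)^*\cong \mathrm{Hom}_{-R}\left(F^nM,\left(F^mM\right)^*\right)$ (valid because $F^nM$ is finitely generated and projective by part (i)) with the hom-tensor adjunction, and unwinding these two isomorphisms yields exactly your dual-basis inverse $\psi\mapsto\sum_i\psi(u_i\tensor{R}-)\tensor{R}u_i^*$, so your argument is the explicit version of the paper's abstract one. Where you genuinely diverge is in the surjectivity of $\tau_n^*$. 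The paper argues dually: since each $\tau_{n,n+1}$ splits, each $\tau_{n,n+1}^*$ is surjective, and the canonical projections out of an inverse limit of a surjective $\mathbb{N}$-indexed system are surjective, so $\tau_n^*$ is surjective via the identification $M^*\cong\varprojlim\left(F^nM\right)^*$ of \eqref{Eq:D}; a right $R$-linear retraction $\theta_n$ of $\tau_n$ is then only produced afterwards (Remark \ref{rem:vartheta}) by lifting a left $R$-linear section of $\tau_n^*$ through the projectivity of $F^nM^*$. You instead build $\theta_n$ directly as the projection onto $\bigoplus_{k=0}^{n}F^kM/F^{k-1}M$ under the isomorphism $M\cong\gr(M)$ of Lemma \ref{lemma:fgpquotients}, and read off the surjectivity of $\tau_n^*$ by transposing. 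Your route is more elementary and gives the retraction for free; the one point worth making explicit is that the isomorphism $M\cong\gr(M)$ restricts to $F^nM\cong\bigoplus_{k=0}^{n}F^kM/F^{k-1}M$ compatibly with the inclusions $\tau_{n,n+1}$ (which it does, being assembled from compatible level-wise splittings in the proof of that Lemma), so that the projection onto the first $n+1$ summands really is a right $R$-linear retraction of $\tau_n$.
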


\begin{proof}
The first claim of $(i)$ follows directly from Lemma \ref{lemma:fgpquotients}. To prove the second one, we proceed as follows. If $\left\{S_i,\varphi_{j,i}\mid i,j\in\mathbb{N},j\geq i\right\}$ is an inverse system in the category of left $R$-modules with surjective transition maps $\varphi_{j,i}:S_j\to S_i$, $j\geq i$, then every projection $\varphi_i:\prlimit{n}{S_n}\to S_i$ is surjective as well (cf. e.g. \cite[Remark 2.14]{Gh}). Since the transition maps $\tau_{n,n+1}$ are split monomorphisms, their transposes $\tau_{n,n+1}^*$ are surjective, whence the canonical maps $\varphi_i:\prlimit{n}{(F^nM)^*}\to (F^iM)^*$ are surjective as well. If we denote by $\Phi$ the isomorphism of \eqref{Eq:D}, then it satisfies $\varphi_n\circ\Phi=\tau_n^*$, whence $\tau_n^*$ is surjective.

Finally, in view of \cite[Lemma 11.3]{BSZ} and the hom-tensor adjunction respectively, we have the chain of isomorphisms of $R$-bimodules
\begin{equation*}
\left(F^mM\right)^*{}_{\Sscript{R}}\tensor{R} {_{\Sscript{R}}\left(F^nM\right)}^* \cong \rhom{R}{F^nM_{\Sscript{R}}}{\left(F^mM\right)_{\Sscript{R}}^*}\cong \rhom{R}{F^nM_{\Sscript{R}}\tensor{R}{}_{\Sscript{R}}F^mM}{R}
\end{equation*}
which proves $(ii)$.
\end{proof}

\begin{remark}\label{rem:vartheta}
Since $\taun^*$ is surjective and $F^nM^*$ is a finitely generated and projective left $R$-module, there is a left $R$-linear section $F^nM^*\to M^*$ of $\taun^*$ which induces a right $R$-linear retraction $\thetan: M \to F^nM$ of $\taun$. In particular, each of the maps $\taun^*: M^* \to F^nM^*$ is a split epimorphism of left $R$-modules with section $\thetan^*: F^nM^* \to M^*$ as well. Denote temporarily by $\pi_{\Sscript{n}}:M^*\to M^*/\ker{\taun^*}$ the canonical projection. Even if $\thetan^*$ is just left $R$-linear, the composition $\pi_{\Sscript{n}}\circ \thetan^* : F^nM^*\to M^*/\ker{\taun^*}$ is $R$-bilinear as it is the inverse of the $R$-bilinear isomorphism $\widetilde{\taun^*}:M^*/\ker{\taun^*}\to F^nM^*$.
\end{remark}

Now, the right linear dual $M^*$ inherits naturally a decreasing filtration which converts it into a complete $R$-bimodule. Namely, mimicking \cite[Appendix A.2]{MSS}, let us consider the filtration
\begin{equation}\label{eq:filtdual}
F_0M^*=M^* \quad \text{and} \quad F_{n+1}M^*=\ker{\tau_{\Sscript{n}}^*}, \quad \text{for} \,\, n \geq 0. \,\footnote{Observe that $\ker{\tau_{\Sscript{n}}^*}=\left\{f\in M^*\mid F^nM\subseteq \ker{f}\right\}$, whence we will often use the notation $\ann{F^nM}$ to refer to it.} 
\end{equation}
Notice that no confusion may arise in the notation, as the upper or lower indices help in distinguishing between $F_nM^*$, the $n$-th term of the decreasing filtration on $M^*$, and $F^nM^*:=(F^{n}M)^{*}$, the dual of the $n$-th term of the increasing filtration on $M$. In view of (i) of Corollary \ref{coro:FnL}, we have an isomorphism of $R$-bimodules $F^nM^*\cong M^*/F_nM^*$. From this together with the isomorphism \eqref{Eq:D} and Proposition \ref{prop:completion} we deduce that the filtration $\{F_nM^*\mid n \in\N\}$ induces a linear topology over $M^*$ for which it is a complete $R$-bimodule.

\begin{remark}
In order to be able to evaluate limits of Cauchy sequences in $M^*$ on an element of $M$ it is useful to notice the following. Let $\{f_n\}_{\Sscript{n \geq 0}}$ be a Cauchy sequence of  right $R$-linear maps  in $M^*$ and let $f = \limn(f_n)$ denote its limit in $M^*$. Therefore we have that $f-f_n\in F_nM^*=\ker{\tau_{n-1}^*}$ for every $n\geq 1$. For all $x\in M$, there exists an $l\geq0$ such that $x\in F^lM$ and hence for every $k\geq l+1$ we have that
$$f_{k}(x)=f_{k}(\tau_l(x))=f(\tau_l(x))=f(x).$$
This means that the sequence of elements $\left\{f_n(x)\right\}_{n\geq0}$ eventually becomes constant in $A$ and equal to the value of $f$ on $x$. Thus, it is meaningful to set $f(x)=\left(\limn(f_n)\right)(x):=\limn(f_n(x))$.

On the other hand, notice that we may consider the inductive limit function of the inductive cone $\{\tau_n^*(f_{n+1})\}_{n \, \in \, \mathbb{N}}$. However,  $\injlimit{n}{\tau_n^*(f_{n+1})} = \injlimit{n}{\tau_n^*(f)}=f=\limn(f_n)$.
\end{remark}

\subsection{The topological tensor product and the associativity constraint}\label{ssec:TTAC}
It is useful  to recall that the full subcategory of $R$-bimodules which are locally finitely generated and projective on the right is closed under taking tensor products (compare with  \cite[Theorem C.24, page 93]{Majewski}). Indeed, let $M,N$ be filtered $R$-bimodules which are locally finitely generated and projective on the right. Then we have an $R$-bilinear isomorphism
\begin{multline*}
\bigoplus_{p+q=n}\frac{F^pM}{F^{p-1}M}\tensor{R}\frac{F^qN}{F^{q-1}N} \, \longrightarrow \,\frac{\cF^n(M\tensor{R}N)}{\cF^{n-1}(M\tensor{R}N)}, \quad  \\ \bigg((x_p+F^{p-1}M)\tensor{R}(y_q+F^{q-1}N)\mapsto (x_p\tensor{R}y_q)+\cF^{n-1}(M\tensor{R}N)\bigg)
\end{multline*}
where $\cF^n(M\tensor{R}N)=\sum_{p+q=n}F^pM\tensor{R}F^qN$. Thus the factors $\cF^n(M\tensor{R}N)/\cF^{n-1}(M\tensor{R}N)$ are finitely generated and projective as right $R$-modules. Therefore, $M\tensor{R}N$ is locally finitely generated and projective as claimed.

Next, we want to compare the topology that the linear dual $(N\tensor{R}M)^*$ inherits from the structure of locally finitely generated and projective module,  with that of $\what{M^*\tensor{R}N^*}=M^*\cmptens{R}N^*$, the topological tensor product of the complete $R$-bimodules $M^*$ and $N^*$.  At the algebraic level,  we have a canonical $R$-bilinear map
\begin{equation}\label{Eq:eva}
\xymatrix@R=0pt{ (M^*)_{\Sscript{R}} \tensor{R}  {_{\Sscript{R}}(N^*)} \ar@{->}^-{\phi_{\Sscript{M,N}}}[rr] & &   (N_{\Sscript{R}}\tensor{R}{_{\Sscript{R}}M})^* \\ f\tensor{R}g \ar@{|->}^-{}[rr] & & \left[ y\tensor{R}x \longmapsto f(g(y)x) \right] }
\end{equation}
which makes the following diagram to commute
\begin{equation}\label{eq:phifilt}
\xymatrix@R=25pt @C=30pt{ M^*\tensor{R}  N^* \ar@{->}^-{\phi_{\Sscript{M,N}}}[r] \ar@{->}_-{ \left(\ntau{m}{M} \right)^*\tensor{R} \left(\ntau{n}{N} \right)^*}[d]  &   \left(N\tensor{R}M\right)^*  \ar@{->}^-{ \left(\ntau{n}{N}\tensor{R}\ntau{m}{M} \right)^*}[d]  \\  F^mM^*\tensor{R}F^nN^* \ar@{->}^-{\phi_{\Sscript{m,n}}}[r]  & \left(  F^nN\tensor{R}F^mM \right)^*.    }
\end{equation}

In view of the technical subsequent Lemma \ref{lemma:filtrintersection}, it turns out that the natural transformation $\phi_{\Sscript{M,\, N}}$ is a continuous map (in fact a morphism of filtered $R$-bimodules) where $\cF_n\left(M^*\tensor{R}  N^*\right)=\sum_{p+q=n}\Img{F_pM^*\tensor{R}F_qN^*}$ and the decreasing filtration on $(N\tensor{R}M)^*$ is given as in \eqref{eq:filtdual}, that is,  $F_0(N\tensor{R}M)^*=(N\tensor{R}M)^*$  and $F_n(N\tensor{R}M)^*=\ker{\boldsymbol{\tau}_{\Sscript{n-1}}^*}$, $n \geq 1$, where  $\boldsymbol{\tau}_{\Sscript{n}}^*: (N\tensor{R}M)^* \to \cF^n(N\tensor{R}M)^*$  are the canonical projections.

\begin{lemma}\label{lemma:filtrintersection}
Let $R$ be a ring and $V,W$ be decreasingly filtered $R$-bimodules such that $W/F_nW$ are finitely generated and projective as left $R$-modules for all $n\in\N$. Then
\begin{equation*}
\cF_{n}\left(V\tensor{R} W\right) :\,=\, \sum_{p+q=n}F_{p}V \tensor{R} F_{q}W\,=\, \bigcap_{p+q=n+1}\ker{\pi_{\Sscript{p}}^{\Sscript{V}}\tensor{R} \pi_{\Sscript{q}}^{\Sscript{W}}}
\end{equation*}
where $\pi_{\Sscript{p}}^{\Sscript{V}}\colon V\to V/F_pV$ and $\pi_{\Sscript{q}}^{\Sscript{W}}\colon W\to W/F_qW$ are the canonical projections. In particular, for $M$ and $N$ $R$-bimodules such that $N$ is locally finitely generated and projective on the right, we have
$$\cF_n(M^*\tensor{R}N^*)=\bigcap_{p+q=n-1}\ker{\tau_p^*\tensor{R}\tau_q^*}.$$
\end{lemma}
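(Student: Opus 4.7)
The plan is to prove the second equality, the first being immediate from the very definition of $\cF_n(V\otimes_R W)$. For the easy inclusion $\sum_{p+q=n} F_pV \otimes_R F_qW \subseteq \bigcap_{p+q=n+1}\ker(\pi_p^V \otimes_R \pi_q^W)$, I would argue combinatorially: given $p+q=n$ and $p'+q'=n+1$, one cannot simultaneously have $p<p'$ and $q<q'$ (else $p+q\le n-1$), so either $F_pV \subseteq F_{p'}V$ or $F_qW \subseteq F_{q'}W$, and the corresponding projection annihilates the relevant tensor factor.

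For the reverse inclusion, the key ingredient is the hypothesis that each $W/F_qW$ is projective as a left $R$-module. I would first observe that each graded piece $F_qW/F_{q+1}W$ is also left projective, since the short exact sequence $0 \to F_qW/F_{q+1}W \to W/F_{q+1}W \to W/F_qW \to 0$ splits. This allows me to choose, inductively for $q=0,\ldots,n$, left $R$-linear sections $s_{q+1}\colon F_qW/F_{q+1}W \to F_qW$ of the canonical projections. Setting $T_q := s_{q+1}(F_qW/F_{q+1}W) \subseteq F_qW$, I obtain a direct sum decomposition of left $R$-modules
$$W = T_0 \oplus T_1 \oplus \cdots \oplus T_n \oplus F_{n+1}W$$
that respects the filtration in the precise sense that $F_pW = T_p \oplus T_{p+1} \oplus \cdots \oplus T_n \oplus F_{n+1}W$ for all $0 \leq p \leq n+1$ (with the convention $T_{n+1}=F_{n+1}W$).

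Since tensoring over $R$ preserves direct sums, every $\xi \in V \otimes_R W$ decomposes uniquely as $\xi = \sum_{r=0}^n \xi_r + \xi_\infty$ with $\xi_r \in V \otimes_R T_r$ and $\xi_\infty \in V \otimes_R F_{n+1}W$. Each $T_r$ being left projective, the functor $-\otimes_R T_r$ is exact, so $\ker(\pi_p^V \otimes_R \mathrm{id}_{T_r}) = F_pV \otimes_R T_r$ inside $V\otimes_R T_r$. Under the identification $V \otimes_R (W/F_qW) \cong \bigoplus_{r<q} V \otimes_R T_r$, a direct inspection then shows that $(\pi_p^V \otimes_R \pi_q^W)(\xi) = 0$ if and only if $\xi_r \in F_pV \otimes_R T_r$ for every $r < q$. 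Scanning all pairs $(p,q)$ with $p+q=n+1$ yields $\xi_r \in F_{n-r}V \otimes_R T_r$ for each $r \in \{0,\ldots,n\}$, by taking the strongest constraint obtained at $q=r+1$, $p=n-r$. Since $T_r \subseteq F_rW$ and $F_{n+1}W \subseteq F_nW$, this places $\xi$ in $\sum_{p+q=n} F_pV \otimes_R F_qW$, as desired.

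The main obstacle I expect is setting up the filtered direct sum decomposition cleanly: while existence of each individual splitting is straightforward, I need to verify that the iterated choices assemble into a single decomposition that is compatible with the whole filtration of $W$. Once that bookkeeping is in place, the rest reduces to exactness of $-\otimes_R T_r$. For the final ``in particular'' assertion, I would invoke Corollary \ref{coro:FnL}: under the hypothesis on $N$, the dual $F^{q-1}N^* = (F^{q-1}N)^*$ is finitely generated and projective as a left $R$-module, and via the isomorphism $N^*/F_qN^* \cong F^{q-1}N^*$ this makes $N^*/F_qN^*$ left projective, so the first part applies to $V=M^*$ and $W=N^*$.
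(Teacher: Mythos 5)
Your argument is correct. Note that the paper actually states this lemma without proof, so there is nothing to compare against; your write-up supplies exactly the missing details, and by what appears to be the intended route: the projectivity of the quotients $W/F_qW$ forces each graded piece $F_qW/F_{q+1}W$ to be projective, the filtration of $W$ splits as $W=T_0\oplus\cdots\oplus T_n\oplus F_{n+1}W$ with $F_pW=\bigoplus_{r\geq p}T_r\oplus F_{n+1}W$, and the kernel computation then reduces, summand by summand, to the identification $\ker\big(\pi_p^V\otimes_R\mathrm{id}_{T_r}\big)=\Img{F_pV\tensor{R}T_r}$ (for which right-exactness of the tensor product already suffices, though flatness of $T_r$ lets you regard $F_pV\tensor{R}T_r$ as an honest submodule). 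The combinatorial reductions --- the easy inclusion via ``not both $p<p'$ and $q<q'$'', and the extraction of the strongest constraint at $(p,q)=(n-r,r+1)$ --- are both sound, and the index shift in the ``in particular'' statement (from $p+q=n+1$ for the $\pi$'s to $p+q=n-1$ for the $\tau^*$'s, the $p=0$ and $q=0$ terms being vacuous) is handled correctly.
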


The following proposition gives the desired comparison between  the linear topologies on the filtered  bimodules $\what{M^*\tensor{R}N^*}={M^*\cmptens{R}N^*}$ and  $(N\tensor{R}M)^*$.

\begin{proposition}\label{prop:MRN}
Let $M$ and $N$ be two $R$-bimodules, locally finitely generated and projective  as right $R$-modules. Then the natural transformation $\phi_{\Sscript{M,N}}$ of equation \eqref{Eq:eva} induces an homeomorphism (in fact, a filtered isomorphism) ${M^*\cmptens{R}N^*} \cong (N\tensor{R}M)^*$ such that the following diagram is commutative:
$$
\xymatrix@R=20pt@C=45pt{ \left(M^*\right)^{}_{\Sscript{R}}\tensor{R}{^{}_{\Sscript{R}}\left(N^*\right)} \ar@{->}^-{\phi_{\Sscript{M,N}}}[rr]  \ar@{->}_-{\gamma_{\Sscript{M^*\tensor{R}N^*}}}[rd]  & & (N_{\Sscript{R}}\tensor{R}{_{\Sscript{R}}M})^*   \\ & {\left(M^*\right)^{}_{\Sscript{R}}\cmptens{R}{^{}_{\Sscript{R}}\left(N^*\right)}}   \ar@{-->}_-{\what{\phi_{\Sscript{M,N}}}}^-{\cong}[ru]  & }
$$
\end{proposition}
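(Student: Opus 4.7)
The strategy is to verify that $\phi_{\Sscript{M,N}}$ is a filtered morphism, invoke the universal property of completion to factor it through $M^*\cmptens{R}N^*$, and then identify both complete bimodules with a common projective limit via the finite-level dualities of Corollary~\ref{coro:FnL}.

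First, I would check that $\phi_{\Sscript{M,N}}$ is filtered: given $f\in F_pM^*$ and $g\in F_qN^*$ with $p+q=k$, and $(y,x)\in F^aN\times F^bM$ with $a+b\leq k-1$, one of the two inequalities $q>a$ or $p>b$ must hold, and in either case $\phi_{\Sscript{M,N}}(f\tensor{R}g)(y\tensor{R}x)=f(g(y)x)=0$. Hence $\phi_{\Sscript{M,N}}$ sends $\cF_k(M^*\tensor{R}N^*)$ into $\ann{\cF^{k-1}(N\tensor{R}M)}=F_k((N\tensor{R}M)^*)$. As noted at the start of \S\ref{ssec:TTAC}, $N\tensor{R}M$ is itself locally finitely generated and projective as a right $R$-module, whence $(N\tensor{R}M)^*$ is a complete $R$-bimodule by the construction recalled in \S\ref{ssec:DLF}. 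Remark~\ref{rem:counit} then produces the unique continuous morphism $\what{\phi_{\Sscript{M,N}}}$ rendering the stated triangle commutative.

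For the isomorphism statement, I would identify both sides with the projective limit, indexed by $\N\times \N$, of the finite-level tensor products $F^mM^*\tensor{R}F^nN^*$. On the target, applying \eqref{Eq:D} to the locally finitely generated and projective bimodule $N\tensor{R}M$ yields $(N\tensor{R}M)^*\cong \prlimit{(m,n)}{(F^nN\tensor{R}F^mM)^*}$. On the source, I would show that the tensor product filtration $\cF_\bullet$ on $M^*\tensor{R}N^*$ is equivalent to the filter of neighborhoods $K_{m,n}:=\ker{\tau_m^*\tensor{R}\tau_n^*}=F_{m+1}M^*\tensor{R}N^*+M^*\tensor{R}F_{n+1}N^*$: the inclusion $\cF_{m+n+1}\subseteq K_{m,n}$ holds because every summand of $\cF_{m+n+1}$ forces $p\geq m+1$ or $q\geq n+1$, while $K_{k-1,k-1}\subseteq \cF_k$ is immediate from its two obvious summands. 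Hence $M^*\cmptens{R}N^*\cong \prlimit{(m,n)}{F^mM^*\tensor{R}F^nN^*}$, and the family $\phi_{\Sscript{m,n}}$ from Corollary~\ref{coro:FnL}(ii), being natural in both variables, induces an isomorphism of the two projective limits. By diagram \eqref{eq:phifilt} this induced map coincides with $\what{\phi_{\Sscript{M,N}}}$, and matching the filtration step $F_k((N\tensor{R}M)^*)$ with the kernel of the projection onto $(M^*\tensor{R}N^*)/\cF_k$ shows that $\what{\phi_{\Sscript{M,N}}}$ is strict, so its inverse is automatically filtered.

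The main obstacle is the cofinality verification comparing the tensor product topology on $M^*\tensor{R}N^*$ with the product topology defined by the kernels $K_{m,n}$; once this equivalence is in place, the rest of the argument reduces to the naturality of the finite-level dualities already established in Corollary~\ref{coro:FnL}.
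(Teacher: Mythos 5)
Your argument follows essentially the same route as the paper's: both identify $M^*\cmptens{R}N^*$ and $(N\tensor{R}M)^*$ with the projective limit of the doubly indexed system $F^mM^*\tensor{R}F^nN^* \cong (F^nN\tensor{R}F^mM)^*$ and recognize $\what{\phi_{\Sscript{M,N}}}$ as the map induced on limits by the finite-level dualities $\phi_{\Sscript{m,n}}$ of Corollary \ref{coro:FnL}; the paper merely packages your cofinality step as an explicit construction of the inverse via the maps $\Pi_{m,n}$ and $\xi_{m,n}$. The one step that is under-justified is the final strictness claim: the two inclusions $\cF_{m+n+1}\subseteq K_{m,n}$ and $K_{k-1,k-1}\subseteq\cF_k$ give equivalence of the topologies (hence the homeomorphism), but a \emph{filtered} isomorphism is a degree-by-degree statement, and for it you need the exact identity $\cF_k\left(M^*\tensor{R}N^*\right)=\bigcap_{p+q=k-1}K_{p,q}$ --- this is precisely Lemma \ref{lemma:filtrintersection}, where the projectivity of the $F^pM^*$ enters, and it is strictly stronger than the single inclusion $K_{k-1,k-1}\subseteq\cF_k$ you record. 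Once that lemma is invoked, $F_k\left(M^*\cmptens{R}N^*\right)=\ker{\mathfrak{p}_k}=\bigcap_{p+q=k-1}\ker{\Pi_{p,q}}$ is carried by $\what{\phi_{\Sscript{M,N}}}$ onto $\bigcap_{a+b=k-1}\ker{\left(\ntau{b}{N}\tensor{R}\ntau{a}{M}\right)^*}=F_k\left((N\tensor{R}M)^*\right)$, and the rest of your argument stands as written.
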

\begin{proof}
We know that $N\tensor{R}M$ is a locally finitely generated and projective right $R$-module, whence $(N\tensor{R}M)^*$ is a complete $R$-bimodule with respect to the filtration $\mathsf{Ann}(F^k(N\tensor{R}M))=F_{k+1}(N\tensor{R}M)^*$ (see \S\ref{ssec:DLF}).  In view of \eqref{eq:phifilt}, for all $m+n=k$ we have that
\begin{equation*}
\left(\ntau{m}{M}\tensor{R}\ntau{n}{N}\right)^*\left(\phi_{M,N}\left(\cF_{k+1}\left(M^*\tensor{R}N^*\right)\right)\right)=\phi_{m,n}\left(\left(\left(\ntau{m}{M}\right)^*\tensor{R}\left(\ntau{n}{N}\right)^*\right)\left(\cF_{k+1}\left(M^*\tensor{R}N^*\right)\right)\right)=0.
\end{equation*}
In particular,  there exists a unique $R$-bilinear morphism $\sigma_{\Sscript{m,n}}:{M^*\tensor{R}N^*}/{\cF_{k+1}\left(M^*\tensor{R}N^*\right)} \to (  F^nN\tensor{R}F^mM )^*$ such that $\sigma_{\Sscript{m,n}}\,\greekn{\pi}{k+1}{M^*\tensor{R}  N^*}=\left(\ntau{n}{N}\tensor{R}\ntau{m}{M}\right)^*\,\phi_{\Sscript{M,N}}$.
Notice that the completion $\what{\phi_{\Sscript{M,N}}}$ of the filtered morphism $\phi_{M,N}$ fits into the following commutative diagram
\begin{equation}\label{eq:diagrams}
\xymatrix@R=20pt@C=30pt{ M^*\cmptens{R}  N^* \ar@{->}^-{\what{\phi_{\Sscript{M,N}}}}[r] \ar@{->}_-{\mathfrak{p}_{k+1}}[d]  &   (N\tensor{R}M)^*  \ar@{->}^-{\left(\ntau{n}{N}\tensor{R}\ntau{m}{M}\right)^*}[d]  \\  \frac{M^*\tensor{R}N^*}{\cF_{k+1}\left(M^*\tensor{R}N^*\right)} \ar^-{\sigma_{\Sscript{m,n}}}[r]  & \Big(  F^nN\tensor{R}F^mM \Big)^*    }
\end{equation}

Our next aim is to construct explicitly a filtered inverse for $\what{\phi_{\Sscript{M,N}}}$. Set $\gamma:=\gamma_{\Sscript{M^*\tensor{R}N^*}}: M^{*}\tensor{R}N^{*} \to \what{M^{*}\tensor{R}N^{*}}$. For all $m+n=k$ consider the composition $\Pi_{m,n}:=\phi^{-1}_{m,n}\circ \sigma_{m,n}\circ \mathfrak{p}_{k+1}$ which gives an $R$-bilinear morphism $\Pi_{m,n}:M^*\cmptens{R}N^*\to F^mM^*\tensor{R}F^nN^*$. It satisfies $\Pi_{m,n}\circ\gamma=\left(\ntau{m}{M}\right)^*\tensor{R}\left(\ntau{n}{N}\right)^*$ and
$$
F_{k+1}\left(M^*\cmptens{R}N^*\right):=\ker{\mathfrak{p}_{k+1}}=\bigcap_{m+n=k}\ker{\Pi_{m,n}}.
$$
Now, by considering the $R$-bilinear maps $\phi^{-1}_{\Sscript{m,n}}\, \sigma_{\Sscript{m,n}}:M^*\tensor{R}N^*/\cF_{k+1}\left(M^*\tensor{R}N^*\right) \to F^mM^*\tensor{R}F^nN^* $ and
\begin{equation}\label{eq:}
\xi_{m,n}:F^mM^*\tensor{R}F^nN^*\to \frac{M^*\tensor{R}N^*}{\cF_{h+1}\left(M^*\tensor{R}N^*\right)}; \qquad \left(f\tensor{R}g\mapsto \theta_{\Sscript{m}}^*(f)\tensor{R}\thetan^*(g)+\cF_{h+1}\left(M^*\tensor{R}N^*\right)\right), \, \footnote{Notice that we cannot perform the tensor product $\theta_{\Sscript{m}}^*\tensor{R}\thetan^*$ as the maps $\thetan^*$ are just left $R$-linear. Nevertheless, the stated morphism is well-defined. It is a consequence of Remark \ref{rem:vartheta} and of the fact that $\ker{\left(\ntau{p}{M}\right)^*\tensor{R}\left(\ntau{q}{N}\right)^*} \subseteq \cF_{m+1}\left(M^*\tensor{R}N^*\right)$,  $m=\min(p,q)$.}
\end{equation}
for all $m+n=k$ and $h=\min(m,n)$, one can show that $M^*\cmptens{R}  N^*$ together with the family of morphisms $\left\{\Pi_{\Sscript{m,n}}\mid n,m\geq0\right\}$ is isomorphic to the inverse limit of the projective system $F^mM^*\tensor{R}F^nN^*$ with structure maps $\left(\ntau{p,m}{M}\right)^* \tensor{R}\left(\ntau{q,n}{N}\right)^*$ for all $p\leq m$ and $q\leq n$. Now, by definition of $\Pi_{\Sscript{m,n}}$ we have that
\begin{equation}\label{eq:phiPi}
\phi_{\Sscript{m,n}}\circ \Pi_{\Sscript{m,n}}\stackrel{(\text{def})}{=}\sigma_{\Sscript{m,n}}\circ \mathfrak{p}_{k+1}\stackrel{\eqref{eq:diagrams}}{=}\left(\ntau{n}{N}\tensor{R}\ntau{m}{M}\right)^*\circ \what{\phi_{\Sscript{M,N}}},
\end{equation}
whence $\what{\phi_{\Sscript{M,N}}}$ is also the unique morphism induced by the map of projective systems $\phi_{\Sscript{m,n}}$. By considering $\phi_{\Sscript{m,n}}^{-1}$ instead, one deduces that there exists a unique morphism $\psi_{\Sscript{M,N}}:\left(N\tensor{R} M\right)^*\to M^*\cmptens{R}  N^*$ such that $\Pi_{\Sscript{m,n}}\circ \psi_{\Sscript{M,N}}=\phi_{\Sscript{m,n}}^{-1} \circ \left(\ntau{n}{N}\tensor{R}\ntau{m}{M}\right)^*$. It is not difficult now to see that $\what{\phi_{\Sscript{M,N}}}$ and $\psi_{\Sscript{M,N}}$ are filtered morphisms which are mutually inverses.
\end{proof}

\begin{remark}
Given $z\in  M^*\cmptens{R}  N^*$, we already know that $z=\limn{\left(\mathfrak{p}_n(z)\right)}$, up to a choice of a representative in $M^*\tensor{R}  N^*$ for each element $\mathfrak{p}_n(z)$. Fix $n\geq 0$, for all $h,k\geq n$ such that $n=\min(h,k)$, it turns out from the previous proof that
\begin{align*}
\left(\xi_{h,k}\circ \Pi_{h,k}\right)(z) & \stackrel{(\text{def})}{=} \left( \xi_{h,k} \circ  \phi^{-1}_{h,k} \circ  \sigma_{h,k}\circ  \mathfrak{p}_{h+k+1} \right)(z) = \left( \xi_{h,k} \circ  \phi^{-1}_{h,k} \circ  \sigma_{h,k}\circ  \pi_{h+k+1} \right)(x) \\
 & = \left( \xi_{h,k} \circ  \phi^{-1}_{h,k} \circ  \left(\ntau{k}{N}\tensor{R}\ntau{h}{M}\right)^*\circ \phi_{\Sscript{M,N}} \right)(x)  \stackrel{\eqref{eq:phifilt}}{=} \left( \xi_{h,k} \circ   \left( \left(\ntau{h}{M}\right)^*\tensor{R}\left(\ntau{k}{N}\right)^* \right) \right)(x) = \pi_{n+1}(x)
\end{align*}
for some $x\in M^*\tensor{R}  N^*$ and where $\pi_i:M^*\tensor{R}  N^*\to M^*\tensor{R}  N^*/\cF_i\left(M^*\tensor{R}  N^*\right)$ is the canonical projection, for all $i\geq 0$. Now, $\mathfrak{p}_{h+k+1}(z)=\pi_{h+k+1}(x)=\mathfrak{p}_{h+k+1}(\gamma(x))$ implies that $z-\gamma(x)\in\ker{\mathfrak{p}_{h+k+1}}\subseteq \ker{\mathfrak{p}_{n+1}}$, since $n\leq h+k$. Thus $\pi_{n+1}(x)=\mathfrak{p}_{n+1}(\gamma(x))=\mathfrak{p}_{n+1}(z)$ as well and hence $\xi_{h,k}\circ \Pi_{h,k} = \mathfrak{p}_{\min(h,k)+1}$. In particular,
\begin{equation}\label{eq:zlim}
z=\limn{\left(\mathfrak{p}_{n+1}(z)\right)}=\limn{\left(\left(\xi_{n,n}\circ \Pi_{n,n}\right)(z)\right)}.
\end{equation}
\end{remark}

\bigskip
\textbf{Acknowledgement.} 
Both authors would like to thank the referee for the careful reading and the helpful comments on a previous version of the paper.
Moreover, the second author would like to thank the members of the campus in Ceuta for their friendship and the warm hospitality during his stays there.

\newcommand{\etalchar}[1]{$^{#1}$}
\def\cprime{$'$} \def\polhk#1{\setbox0=\hbox{#1}{\ooalign{\hidewidth
  \lower1.5ex\hbox{`}\hidewidth\crcr\unhbox0}}} \def\cprime{$'$}


\begin{thebibliography}{KLW}



\bibitem[AHS]{adamek}
J.~Ad\'amek, H.~Herrlich, G.~Strecker, \emph{Abstract and concrete categories. The joy of cats}. Pure and Applied Mathematics (New York). A Wiley-Interscience Publication. John Wiley \& Sons, Inc., New York, 1990. ONLINE EDITION downloaded from \url{katmat.math.uni-bremen.de/acc} (last checked the 23rd of November 2016).


\bibitem[Be]{Benabou}
J.~B\'enabou, \emph{Introduction to bicategories}. Reports of the Midwest Category Seminar pp. 1--77 Springer, Berlin, 1967 .

\bibitem[Bo]{Borel} A.~Borel, \emph{Seminar on transformation groups}. With contributions by G. Bredon, E. E. Floyd, D. Montgomery, R. Palais. Annals of Mathematics Studies, No. \textbf{46} Princeton University Press, Princeton, N.J. 1960.

\bibitem[B61]{Bou:AC12}
N.~Bourbaki, \emph{\'El\'ements de math\'ematique. Alg\`ebre commutative. Chapitres 1 \`a 4. } Reprint. Masson, Paris, 1985.


\bibitem[B70]{MR}
N.~Bourbaki, \emph{\'El\'ements de math\'ematique. {A}lg\`ebre.  {C}hapitres 1 \`a 3}. Hermann, Paris, 1970.

\bibitem[B71]{MR0358652}
N.~Bourbaki. \emph{\'El\'ements de math\'ematique. Topologie g\'en\'erale. Chapitres 1 \`a 4}.  Hermann, Paris, 1971.

\bibitem[BSZ]{BSZ}
R.~Bautista, L.~Salmer\'on, R.~Zuazua, \emph{Differential tensor algebras and their module categories.} London Mathematical Society Lecture Note Series, \textbf{362}. Cambridge University Press, Cambridge, 2009.

\bibitem[Ca]{Cartan:1958}
H.~Cartan, \emph{Homologie et cohomologie d'une alg\`ebre gradu\'ee}. S\'eminare Henri Cartan, tome 11, n$^o$2 (1958-1959), exp, n$^o$ 5, p. 1-20.

\bibitem[CGK]{CGK:2016}
S.~Chemla, F.~Gavarini, N.~Kowalzig, \emph{Duality features of left Hopf algebroids}.
Algebr Represent Theor \textbf{19} (2016), 913--941.

\bibitem[Cr]{Crainic}
M.~Crainic, \emph{Differentiable and algebroid cohomology, van Est isomorphisms, and characteristic classes}. Comment. Math. Helv. \textbf{78} (2003), no. 4, 681-721. 

\bibitem[Del]{Deligne:1990}
P.~Deligne, \emph{Cat{\'e}gories tannakiennes}. In
\emph{The Grothendieck Festschrift} (P. Cartier et al., eds), Progr. math., 87, vol. II,
Birkh{\"a}user, Boston, MA. 1990, pp. 111--195.


\bibitem[Dev]{MR1320989}
E.~S.~Devinatz, \emph{Morava's change of rings theorem}. The \v{C}ech centennial (Boston, MA, 1993), pp. 83--118,
Contemp. Math., \textbf{181}, Amer. Math. Soc., Providence, RI, 1995.



\bibitem[EL]{ElKaoutit:2015} 
L.~El~Kaoutit, \emph{On geometrically transitive Hopf algebroids}.  J. Pure Appl. Algebra (2017),\url{https://doi.org/10.1016/j.jpaa.2017.12.019}

\bibitem[EG]{LaiachiGomez}
L.~El~Kaoutit, J.~G\'omez-Torrecillas, \emph{On the finite dual of a co-commutative Hopf algebroid. Application to linear differential matrix equations}.  Preprint, arXiv:1607.07633v2 (2016).

\bibitem[ES]{LaiachiPaolo}
L.~El~Kaoutit, P.~Saracco, \emph{Comparing topologies on linearly recursive sequences}.  Preprint, arXiv:1705.03433 (2017).

\bibitem[Gh]{Gh}
N.~J.~Ghandour, \emph{Inverse Limits of Hochschild-Mostow Inverse Systems}. Int. J. Contemp. Math. Sciences \textbf{6} (2011), no. 33-36, 1635--1644.

\bibitem[Grw]{Grabowski}
J.~Grabowski, \emph{Quasi-derivations and QD-algebroids}. Rep. Math. Phys. \textbf{52} (2003), no. 3, 445-451. 

\bibitem[Grt]{MR0163908}
A.~Grothendieck, \emph{\'{E}l\'ements de g\'eom\'etrie alg\'ebrique. {I}. {L}e langage des sch\'emas.} Inst. Hautes \'Etudes Sci. Publ. Math. No. \textbf{4}, 1960.
	
\bibitem[He]{Herz}
J.~C.~Herz, \emph{Pseudo-alg\`ebres de Lie}. C. R. Acad. Sci. Paris \textbf{236} (1953), I, pp. 1935-1937, II, pp. 2289-2291. 
	

\bibitem[Ka]{Kapranov:2007}
M.~Kapranov, \emph{Free Lie algebroids and the space of paths}. Sel. Math., New ser. \textbf{13} (2007), 277--319.

\bibitem[KLW]{ModularClasses}
Y.~Kosmann-Schwarzbach, C.~Laurent-Gengoux, A.~Weinstein, \emph{Modular classes of Lie algebroid morphisms}. Transform. Groups \textbf{13} (2008), no. 3-4, 727-755. 

\bibitem[KM]{Magri}
Y.~Kosmann-Schwarzbach, E.~Magri, \emph{Poisson-Nijenhuis structures}. Ann. Inst. Henri Poincar\'e \textbf{A53} (1990), 35-81. 

\bibitem[Ko1]{Kowalzig}
N.~Kowalzig, \emph{Hopf Algebroids and Their Cyclic Theory}. Ph. D. Thesis Utrecht University, June 2009.

\bibitem[Ko2]{KowalzigExtBVA}
N.~Kowalzig, \emph{When $\textsf{Ext}$ is a Batalin-Vilkovisky Algebra}. Preprint, arXiv: 1610.01229 (2016).

\bibitem[Kra]{Krasilshchik:1997}
I.~S.~Krasil'shchik, \emph{Calculus over Commutative Algebras: A Concise User Guide}. 
Acta Appl. Math. \textbf{49} (1997), 235--248.

\bibitem[LvO]{MR1420862}
H.~Li, F.~van~Oystaeyen, \emph{Zariskian filtrations}. $K$-Monographs in   Mathematics, \textbf{2}. Kluwer Academic Publishers, Dordrecht, 1996.




\bibitem[ML]{MR1712872}
S.~Mac~Lane, \emph{Categories for the working mathematician}. Graduate Texts in Mathematics \textbf{5}, Springer-Verlag, New York, 1998.


\bibitem[Ma]{Majewski}
M.~Majewski, \emph{Rational homotopical models and uniqueness.} Mem. Amer. Math. Soc. \textbf{143} (2000), no. 682.


\bibitem[MR]{MR1811901}
J.~C.~McConnell, J.~C.~Robson. \emph{Noncommutative {N}oetherian rings}. With the cooperation of L. W. Small. Revised edition. Graduate Studies in Mathematics, \textbf{30}. American Mathematical Society, Providence, RI, 2001.



\bibitem[MSS]{MSS}
S.~Meljanac, Z.~\v Skoda, M.~Stoji\'c, \emph{Lie algebra type noncommutative phase spaces are Hopf algebroids}. Lett. Math. Phys. \textbf{107} (2017), no. 3, 475--503. 

\bibitem[Me]{MR1608699}
G.~Mezzetti, \emph{Topological {M}orita equivalences induced by ideals generated by dense idempotents.} J. Algebra \textbf{201} (1998), no. 1, 167--188.


\bibitem[MM]{MoerdijkLie}
I.~Moerdijk, J. Mr\v{c}un, \emph{On the universal enveloping algebra of a Lie algebroid}. Proc. Amer. Math. Soc. \textbf{138} (2010), no. 9, 3135--3145.

\bibitem[Mo]{Montgomery:1993}
S.~Montgomery, \emph{Hopf algebras and their actions on rings}. CBMS Regional Conference Series in Mathematics, \textbf{82} American Mathematical Society, Providence, RI, 1993.

\bibitem[N]{nestruev}
J.~Nestruev, \emph{Smooth manifolds and observables}. Joint work of A. M. Astashov, A. B. Bocharov, S. V. Duzhin, A. B. Sossinsky, A. M. Vinogradov and M. M. Vinogradov. Translated from the 2000 Russian edition by Sossinsky, I. S. Krasil'schik and Duzhin. Graduate Texts in Mathematics, \textbf{220}. Springer-Verlag, New York, 2003.

\bibitem[NvO]{NasOys}
C.~N{\u{a}}st{\u{a}}sescu, F.~van Oystaeyen. \emph{Graded ring theory}. North-Holland Mathematical Library, \textbf{28}. North-Holland Publishing Co., Amsterdam-New York, 1982.


\bibitem[P]{Piotrowski}
Z.~Piotrovski, \emph{Separate versus Joint Continuity - An Update}. In: Proceeding of the 29th Spring Conference of the Union Bulgarian Mathematicians, Lovetch, Bulgaria. 2000, pp. 93-106.

\bibitem[Q]{quillen}
D.~Quillen, \emph{Rational homotopy theory}. Ann. of Math. (2) \textbf{90} 1969 205--295.

\bibitem[Ra]{ravenel}
D.~C.~Ravenel, \emph{Complex Cobordism and Stable Homotopy Groups of Spheres}. Pure and Applied Mathematics, \textbf{121}. Academic Press, Inc., Orlando, FL, 1986.

\bibitem[Ri]{Rin:DFOGCA}
G.~S.~Rinehart, \emph{Differential forms on general commutative algebras}.  Trans. Amer. Math. Soc. \textbf{108}, 1963, 195--222.

\bibitem[Sc]{Schau:DADOQGHA}
P.~Schauenburg, \emph{Duals and doubles of quantum groupoids ({$\times\sb   R$}-{H}opf algebras)}, New trends in Hopf algebra theory (La Falda, 1999),   Contemp. Math., vol. \textbf{267}, Amer. Math. Soc., Providence, RI, 2000,
 273--299.


\bibitem[Sl]{Seal} 
G.~J.~Seal, \emph{Cartesian Closed Topological Categories and Tensor Products}. Appl. Categ. Structures \textbf{13} (2005), no. 1, 37--47.

\bibitem[Sh]{shulman} 
M.~Shulman, \emph{Framed Bicategories and Monoidal Fibrations}. Theor. Appl. Categ. \textbf{20} (2008), no. 18, 650--738.

\bibitem[Sm]{Semadeni} 
Z.~Semadeni, \emph{Banach Spaces of Continuous Functions. Vol. I}.
Monografie Matematyczne, Tom \textbf{55}. PWN--Polish Scientific Publishers, Warsaw, 1971.


\bibitem[Sw]{sweedler}
M.~E.~Sweedler, \emph{Groups of simple algebras.} Inst. Hautes \'Etudes Sci. Publ. Math. No. \textbf{44} (1974), 79--189.


\end{thebibliography}
\end{document}